\DeclarePairedDelimiter{\ceil}{\lceil}{\rceil}
\DeclarePairedDelimiter\floor{\lfloor}{\rfloor}
\newcommand\rurl[1]{%
  \href{https://#1}{\nolinkurl{#1}}%
}
\newtheorem{definition}{Definition}[section]
\newtheorem{theorem}[definition]{Theorem}
\newtheorem{lemma}[definition]{Lemma}
\newtheorem{corollary}[definition]{Corollary}
\newtheorem{proposition}[definition]{Proposition}
\newtheorem{remark}[definition]{Remark}
\newtheorem*{theorem*}{Theorem}
\def\N{{\mathbb N}}
\def\Z{{\mathbb Z}}
\def\R{{\mathbb R}}
\def\T{{\mathbb T}}
\def\C{{\mathbb C}}
\newcommand{\supp}{{\mathrm{supp}}}
\newcommand{\lt}{{L^2(\R)}}
\newcommand{\bone}{{\boldsymbol 1}}
\newcommand{\ift}{{\mathcal{F}^{-1}}}
\newcommand{\ft}{{\mathcal{F}}}
\newcommand\thankssymb[1]{\textsuperscript{\@fnsymbol{#1}}}
\def\@makefnmark{%
  \leavevmode
  \raise.9ex\hbox{\fontsize\sf@size\z@\normalfont\tiny\@thefnmark}}
\begin{document}

\title[Phase retrieval in Gaussian shift-invariant spaces]{Stable Gabor phase retrieval in Gaussian shift-invariant spaces via biorthogonality}
\author[Philipp Grohs]{Philipp Grohs\thankssymb{1}\textsuperscript{,}\thankssymb{2}\textsuperscript{,}\thankssymb{3}}
\address{\thankssymb{1}Faculty of Mathematics, University of Vienna, Oskar-Morgenstern-Platz 1, 1090 Vienna, Austria}
\address{\thankssymb{2}Research Network DataScience@UniVie, University of Vienna, Kolingasse 14-16, 1090 Vienna, Austria}
\address{\thankssymb{3}Johann Radon Institute of Applied and Computational Mathematics, Austrian Academy of Sciences, Altenbergstrasse 69, 4040 Linz, Austria}
\email{philipp.grohs@univie.ac.at, philipp.grohs@oeaw.ac.at}
\author[Lukas Liehr]{Lukas Liehr\thankssymb{1}}
\email{lukas.liehr@univie.ac.at}

\maketitle

\begin{abstract}
We study the phase reconstruction of signals $f$ belonging to complex Gaussian shift-invariant spaces $V^\infty(\varphi)$ from spectrogram measurements $|\mathcal G f(X)|$ where $\mathcal G$ is the Gabor transform and $X \subseteq \R^2$.
An explicit reconstruction formula will demonstrate that such signals can be recovered from measurements located on parallel lines in the time-frequency plane by means of a Riesz basis expansion. Moreover, connectedness assumptions on $|f|$ result in stability estimates in the situation where one aims to reconstruct $f$ on compact intervals. 
Driven by a recent observation that signals in Gaussian shift-invariant spaces are determined by lattice measurements [Grohs, P., Liehr, L., \emph{Injectivity of Gabor phase retrieval from lattice measurements}, Appl. Comput. Harmon. Anal. \textbf{62} (2023), pp. 173--193] we prove a sampling result on the stable approximation from finitely many spectrogram samples. The resulting algorithm provides a provably stable and convergent approximation technique.
In addition, it constitutes a method of approximating signals in function spaces beyond $V^\infty(\varphi)$, such as Paley-Wiener spaces.

\vspace{0.4cm}
\noindent \textbf{Keywords.} phase retrieval, Gabor transform, shift-invariant spaces, dual generator, Riesz bases, signal analysis

\noindent \textbf{AMS subject classifications.} 42C15, 46B15, 94A12, 94A20 
\end{abstract}

\section{Introduction}

Phaseless signal reconstruction deals with the problem of recovering a function $f$ from intensity measurements of the form $\{ |Af(x)| : x \in X \}$ where $A$ is a linear transformation and $X$ is a subset of the domain of $Af$. This is known as the phase retrieval problem, a non-linear inverse problem arising in numerous applications in science and engineering, such as coherent diffraction imaging \cite{Silva,Fienup,Shechtman}, speech recognition \cite{Couvreur} and quantum mechanics \cite{CORBETT200653}. In the situation where $f$ belongs to an infinite-dimensional Banach or Hilbert space, phase retrieval is known to be never uniformly stable \cite{alaifariGrohs, daubcahill}, hence constituting a challenging problem for numerical reconstruction approaches. See the surveys \cite{Strohmer,GrohsKoppensteinerRathmair} for a current overview on uniqueness, stability and algorithms for phase retrieval. Recent attention was drawn to signals having a shift-invariant structure, that is,
\begin{equation}\label{def:si_space}
f \in V_\beta^p(\phi) = \left \{ \sum_{n \in \Z} c_n \phi(\cdot - \beta n) : (c_n)_n \in \ell^p(\Z) \right \},
\end{equation}
where $\phi \in L^p(\R)$ is the so-called generator of $V_\beta^p(\phi)$ and $p \in [1,\infty]$. The constant $\beta > 0$ is a regularity parameter, commonly referred to as the step-size. In a series of papers, Chen et. al. studied the phase retrievability of a real-valued map $f$ from its modulus $|f|, A = \mathrm{Id}$, assuming that the generator has compact support \cite{chen1,chen2,chen3}. Shenoy et. el. investigated the situation where $A =  \ft$ is the Fourier transform and the defining sequence $(c_n)_n$ of $f$ obeys special properties \cite{shenoy}.
In the situation where $\phi = \mathrm{sinc}$ is the cardinal sine, i.e. the considered signals lie in the classical Paley-Wiener space, Thakur demonstrated how to recover a real-valued band-limited function from unsigned samples lying on a suitable dense sampling set \cite{Thakur2011}.
Similar results for real-valued maps were derived for Gaussian generators as well as totally positive generators of Gaussian type and are due to Gröchenig \cite{groechenigPhase} and Romero \cite{romeroPhase}. Besides, it was shown that if $\phi = \varphi^\sigma$ is the Gaussian
\begin{equation}\label{def:gaussian}
\varphi^\sigma(t) = e^{-\frac{t^2}{2\sigma^2}}
\end{equation}
with variance $\sigma^2$ then, in general, complex-valued maps are not determined from samples of their absolute value. Hence, additional information is needed to achieve injectivity for complex signals from measurements $|Af|$ with a suitably chosen signal-transformation $A$. The authors of the present article proved in \cite{grohsliehr2020} that every complex-valued map in $V_\beta^1(\varphi)$ is uniquely determined up to a global phase factor from measurements of the form $|\mathcal{V}_gf(X)|$ where $\mathcal{V}_g$ is the short-time Fourier transform with Gaussian window $g=\varphi^\sigma$,
\begin{equation}\label{def:gabor}
    \mathcal{V}_gf(x,\omega) = \int_\R f(t)\overline{g(t-x)}e^{-2\pi i \omega t} \, dt,
\end{equation}
and $X \subset \R^2$ is a separated set of sampling points in the time-frequency plane. The resulting phase retrieval problem with transformation $A=\mathcal{V}_g$ is known as the Gabor phase retrieval problem which plays an important role in applications such as ptychography, a popular and highly successful approach in coherent diffraction imaging.
The previous observations serve as the starting point for a more penetrating investigation of the Gabor phase retrieval problem in a Gaussian shift-invariant setting and motivate the work in hand. In particular, we derive an explicit inversion formula and stability estimates as well as propose a new reconstruction algorithm which approximates complex-valued signals in $V_\beta^\infty(\varphi)$ from finitely many (deterministic) samples of $|\mathcal{V}_gf|$ in a provable and stable manner.

Classical algorithmic approaches on solving the phase retrieval problem are based on fixed point iterations, iterative projection methods and gradient descent methods.
Examples include the alternating projection method, the Gerchberg–Saxton algorithm \cite{Gerchberg1972APA}, error reduction \cite{Fienup1982PhaseRA}, the Douglas-Rachford algorithm \cite{Fannjiang2020FixedPA}, Hybrid Input-Output Algorithm \cite{Fienup1982PhaseRA} and the Averaged Alternating Reflection Algorithm \cite{Luke_2004,Li_2017}.
The difficulty of analyzing convergence and robustness of these methods lies in the non-convexity of the phase retrieval problem. Heuristically, these methods perform well if one chooses a right initialization which is close to a solution of the problem. However, it is in general unclear how to perform the initialization step and provide convergent results in a noise regime. Moreover, the discretization of these methods in an infinite-dimensional setting constitutes a difficult task.
In contrast, the derived algorithm of the present paper overcomes these difficulties by providing a provably convergent approximation routine from noisy samples. Moreover, we quantify exactly how many samples of $|\mathcal{V}_gf|$ need to be taken in order to guarantee a given reconstruction error.

\subsection{Notation}

Throughout this article, the shift-invariant space $V_\beta^p(\phi)$, the Gaussian $\varphi^\sigma$ and the short-time Fourier transform are defined as in \eqref{def:si_space}, \eqref{def:gaussian} and \eqref{def:gabor}, respectively. The cross-ambiguity function of $f$ and $g$ is the map $A(f,g)(x,\omega) = e^{\pi i x \omega}\mathcal{V}_gf(x,\omega)$. The spectrogram of $f$ w.r.t. to a window $g$ is given by the map $(x,\omega) \mapsto |\mathcal{V}_gf(x,\omega)|^2$. If $g = \varphi^\sigma$ is a Gaussian then we set
$$
\mathcal{G} \coloneqq \mathcal{V}_{\varphi} \coloneqq \mathcal{V}_{\varphi^\sigma}
$$
and call $\mathcal{G}$ the Gabor transform of $f$.
For clarity of the exposition and simplicity of notation, we will drop the dependence on $\sigma$.
For a function $f : \R \to \C$ and an $\omega \in \R$ we define its tensor product $f_\omega$ via
\begin{equation}\label{def:tensor}
   f_\omega \coloneqq (T_\omega f)\overline{f}
\end{equation}
where $T_\omega f = f(\cdot - \omega)$ is the shift-operator. Moreover, the modulation operator $M_\omega$ is defined by $M_\omega f(t) = e^{2\pi i \omega t}f(t)$. In Section \ref{section2} we will exhibit that under suitable assumptions on $\phi$ the system of translates $(T_{\beta n} \phi)_{n \in \Z}$ forms a Riesz basis for the space $V_\beta^2(\phi)$. Denoting by $S$ the frame operator associated with $(T_{\beta n} \phi)_{n \in \Z}$ and making use of the property that the operators $S$ and $T_{\beta n}$ commute for every $n \in \Z$ implies that there exists a map $\tilde \phi$ such that $(T_{\beta n} \tilde \phi)_{n \in \Z}$ is the dual frame of $(T_{\beta n} \phi)_{n \in \Z}$. The function $\tilde \phi$ given by
\begin{equation}
    \tilde \phi \coloneqq S^{-1}\phi
\end{equation}
is called the dual generator of $\phi$.
If $(T_{\beta n} \phi)_{n \in \Z}$ forms a Riesz basis for $V_\beta^2(\phi)$ and $f =  \sum_n c_n T_{\beta n } \phi$ then we call $c = (c_n)_n \subset \C$ the defining sequence of $f$.
In order to study stability properties of the Gabor phase retrieval problem in a Gaussian shift-invariant regime we define a mixed norm as follow.
Let $\alpha>0, p \in [1,\infty]$ and suppose that $F: \alpha \Z \times \R \to \C$ is measurable. The mixed norm $\| \cdot \|_{\alpha,p}$ is defined by
\begin{equation}\label{def:mixed_norm}
\| F \|_{\alpha,p} \coloneqq \begin{cases} 
      \left ( \sum_{n \in \Z} \| F(\alpha n, \cdot) \|_{L^1(\R)}^p \right )^{\tfrac{1}{p}}, & p \in [1,\infty) \\
      \sup_{n \in \Z} \| F(\alpha n, \cdot) \|_{L^1(\R)},  & p = \infty
   \end{cases}.
\end{equation}
This is the $\ell^p$-norm of the sequence $(\| F(\alpha n, \cdot) \|_{L^1(\R)})_{n \in \Z}$.
If $F$ is defined on $\R^2$ we set $\| F \|_{\alpha,p} \coloneqq \| F|_{\alpha\Z \times \R} \|_{\alpha,p}$ where $F|_{\alpha\Z \times \R}$ is the restriction of $F$ to $\alpha\Z \times \R$. If not specified otherwise, all indices will run over the integers $\Z$. Finally, we say that two functions $f,h : \R \to \C$ equal up to a global phase (or: up to a unimodular constant) if there exists a $\tau \in \T \coloneqq \{ z \in \C : |z|=1 \}$ such that $f=\tau h$.

\subsection{Main results}

The first main result, Theorem \ref{thmA:explicit_reconstruction}, provides an explicit inversion formula of a function $ f \in V_\beta^\infty(\varphi)$ from its spectrogram $|\mathcal{G}f(X)|$ where $X = \frac{\beta}{2}\Z \times \R$ is a set of vertical lines in the time-frequency plane. 

\begin{theorem}\label{thmA:explicit_reconstruction}
Let $f \in V_\beta^\infty(\varphi)$ and let $p \in \R$ such that $f(p) \neq 0$. Then there exists a unimodular constant $\tau \in \T$ such that
$$
f(p+\omega) = \tau C^{-\frac{1}{2}} \sum_{n \in \Z} \left ( \int_\R |\mathcal{G}f(\tfrac{\beta}{2}n,t)|^2e^{2\pi i \omega t} \, dt \right ) T_{\frac{\beta}{2}n} \widetilde{\varphi_\omega}(p+\omega)
$$
for every $\omega \in \R$ where
$
C = \sum_{n \in \Z} \left ( \int_\R |\mathcal{G}f(\tfrac{\beta}{2}n,t)|^2 \, dt \right ) T_{\frac{\beta}{2}n} \widetilde{\varphi_0}(p)
$
and $\widetilde{\varphi_\omega}$ is the dual generator of the Gaussian tensor product $\varphi_\omega$.
\end{theorem}

In the previous theorem the maps $\{ \widetilde{\varphi_\omega} : \omega \in \R \}$ constitute a parametrized system of dual generators. In particular, the statement shows that two functions $f,h \in V_\beta^\infty(\varphi)$ agree up a global phase provided that their spectrograms agree on $X = \frac{\beta}{2}\Z \times \R$. Note, that this is a uniqueness statement within the signal class $V_\beta^\infty(\varphi)$. A similar statement does not hold if one replaces $V_\beta^\infty(\varphi)$ by $\lt$, as we have shown in \cite{grohsLiehrJFAA}.

Next, we study stability properties of the above inversion. Heuristically, Theorem \ref{thmB} assumes that the map $f$ has the property that $|f|$ is not too small on large intervals. More precisely, it will be assumed that there exists $J \in \N, p_1, \dots, p_J \in \R$ and $\gamma>0$ such that
\[
J \geq 2 \ \ \land \ \ p_1 < p_2 < \cdots < p_J \ \ \land \ \ |f(p_j)| \geq \gamma \ \forall j \in \{ 1, \dots, J \} \tag{\textbf{P}}.
\]
Condition \textbf{(P)} excludes examples of the form $f^+_s = \varphi(\cdot - a) + \varphi(\cdot -b), f^-_s = \varphi(\cdot - a) - \varphi(\cdot -b), s=|a-b|$, where it is known that exponential instabilities arise for growing $s$ \cite{ALAIFARI2021401}. These examples further show that an assumption on $|f|$ of the above form cannot be dropped.

\begin{theorem}\label{thmB}
Let $\gamma > 0$, $f \in V_\beta^\infty(\varphi)$ and $p_1 < \cdots < p_J \in \R$ such that $|f(p_j)| \geq \gamma$ for all $j$. If $I = [p_1-r,p_J+r]$ and $r = \max_{1 \leq j \leq J-1} (p_{j+1}-p_j)$ then for every $g \in V_\beta^\infty(\varphi)$ we have
\begin{equation*}
\begin{split}
& \min_{\tau \in \T} \| f-\tau g \|_{L^\infty(I)} \\ & \lesssim_{\beta, \sigma} (J-1)e^{\frac{r^2}{4\sigma^2}} \frac{\max \{ \| f \|_{L^\infty(I)}^2, \| f \|_{L^\infty(I)}+\| g \|_{L^\infty(I)} \} }{\min \{ \gamma,\gamma^3 \}} \| |\mathcal{G}f|^2-|\mathcal{G}g|^2 \|_{\frac{\beta}{2},\infty}
\end{split}
\end{equation*}
and the implicit constant depends only on the step-size $\beta$ and the standard-deviation $\sigma$ of the Gaussian $\varphi=\varphi^\sigma$.
\end{theorem}

\begin{figure}
\centering
   \includegraphics[width=12.5cm]{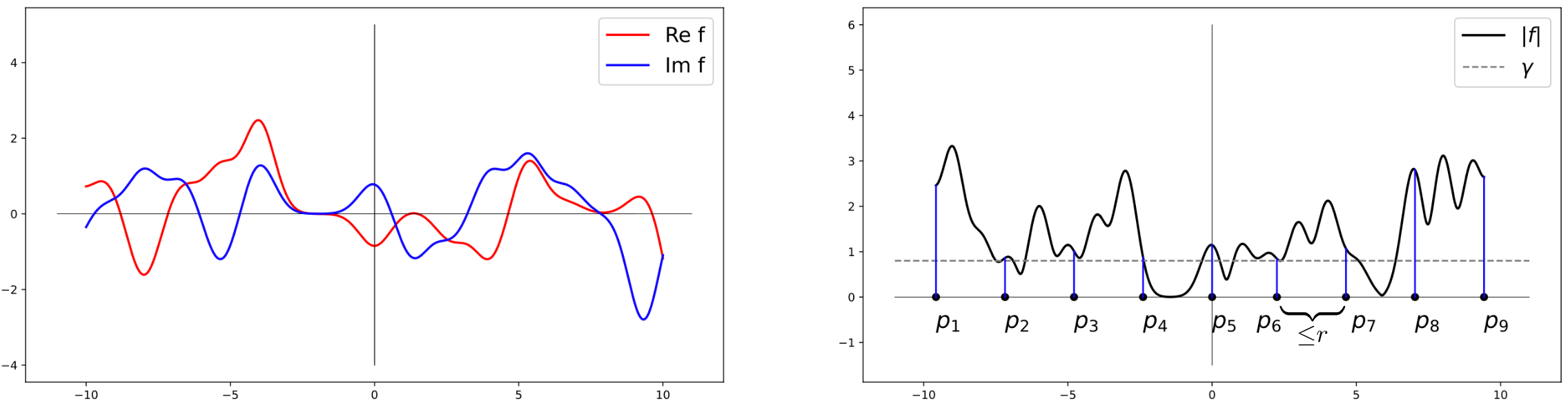}
\caption{Visualization of condition \textbf{(P)}: the points $p_1, \dots, p_9$ satisfy the properties $|f(p_j)| \geq \gamma$ for all $j=1, \dots,9$ and $p_{j+1}-p_j \leq r$ for all $j=1, \dots,8$.}
\end{figure}

We seek to discretize the previous regime in the sense that it yields an algorithmic approximation of functions in $V_\beta^\infty(\varphi)$ from finitely many measurements located on a grid of the form
\begin{equation}\label{def:X}
    X = \tfrac{\beta}{2} \{ -N, \dots, N \} \times h \{ -H, \dots, H \}
\end{equation}
where $N,H \in \N, h,\beta >0$ rather than $X=\frac{\beta}{2}\Z \times \R$. If $\eta \in \R^{(2N+1) \times (2H+1)}$ denotes a noise matrix then the given samples $\mathfrak{S}$ take the form
\begin{equation*}
    \begin{split}
        & \mathfrak{S}=(|\mathcal{G}f(X)|^2+\eta_{n,k})_{n,k} \in \R^{(2N+1) \times (2H+1)}, \\
        & \mathfrak{S}(n,k) = |\mathcal{G}f(\tfrac{\beta}{2}n,hk)|^2 + \eta_{n,k}.
    \end{split}
\end{equation*}
The contribution of the noise $\eta$ will be quantified in terms of the $\ell^\infty$-operator norm of $\eta$, i.e. the maximum absolute row sum of $\eta$. We denote this norm by $\| \eta \|_\infty$. The so-called numerical approximation routine $\mathcal{R}$ depends only on $\mathfrak{S}$ and is defined as follows: for
$
p_1 < p_2 < \cdots < p_J
$
define constants $c_j$ by
$
c_j = h\sum_n \sum_k \mathfrak{S}(n,k) T_{\frac{\beta}{2}n} \widetilde{\varphi_0}(p_j),
$
functions $L_j$ by
$$
L_j(\omega) = \frac{h}{\sqrt{c_j}} \sum_n \sum_k \mathfrak{S}(n,k)e^{2\pi i \omega h k} T_{\frac{\beta}{2}n} \widetilde{\varphi_\omega}(p_j+\omega)
$$
and phases $\nu_0, \dots, \nu_{J-1} \in \T$ by
$$
\nu_0 = 1, \ \ \nu_j = \frac{L_j(p_{j+1}-p_j)}{|L_j(p_{j+1}-p_j)|} \ (j=1, \dots, J-1).
$$
Assuming that $L_j$ and $\nu_j$ are well-defined, i.e. $c_j >0$ and $L_j(p_{j+1}-p_j) \neq 0$ we define $\mathcal{R}:[p_1,p_J] \to \C$ by
\begin{equation}
    \mathcal{R}(t) = \nu_1 \cdots \nu_{j-1} L_j(\omega),
\end{equation}
if $t \in [p_1,p_J]$ such that $t \in (p_j,p_{j+1}]$ with $t=p_j + \omega$. For $t=p_1$ we define $\mathcal{R}(t)=L_1(0)$.
This function has the property that it approximates $f$ from finitely many noisy samples.
Figure \ref{fig:start_example} illustrates the reconstruction performance of the numerical reconstruction routine $\mathcal{R}$ applied to finitely many noisy spectrogram samples of a complex-valued signal. The following theorem proves stability and convergence of this algorithm.

\begin{theorem}\label{thmC}
Suppose that $f \in V_\beta^\infty(\varphi)$ has defining sequence $c \in \ell^\infty(\Z)$ and that there exist $$-s \eqqcolon p_1 < p_2 < \cdots < p_J \coloneqq s$$ with $|f(p_j)| \geq \gamma$ and $\max_{1 \leq j \leq J-1} (p_{j+1}-p_j)\leq r$. Let $I = [-s,s]$ and 
let $0 < \varepsilon \leq \min \left \{ \frac{\gamma^2}{2\sqrt{8}}, \frac{\gamma^3}{4 \| f \|_{L^\infty(I)}} \right \}$ and assume that the parameters $h,H,N$ of the grid $X$ have the property that $$\frac{1}{h}\gtrsim_{\sigma,\beta,r} \log\left ( \frac{J\| c \|_\infty^2}{\varepsilon} + 1 \right), \ N \gtrsim_{\sigma,\beta,r} s+\log\left(\frac{J\| c \|_\infty^2}{\varepsilon}\right),
$$
$$
\ H \gtrsim_{\sigma,\beta,r} \frac{1}{h} \left (\log\left(\frac{J\| c \|_\infty^2}{\varepsilon}\right)\right)^{1/2}.$$
If the noise level satisfies
$\| \eta \|_\infty \lesssim_{\sigma,\beta,r} \frac{\varepsilon}{hJ}$ then
\begin{equation}\label{epsbound}
    \min_{\tau \in \T} \| f - \tau \mathcal{R} \|_{L^\infty(I)} \leq 32\frac{\max \{ 1, \| f \|^2_{L^\infty(I)} \}}{\min \{ \gamma, \gamma^5 \}} (\varepsilon + \varepsilon^2).
\end{equation}
\end{theorem}

The numerical approximation routine $\mathcal{R}$ combined with Theorem \ref{thmC} results in a provably stable and convergent approximation algorithm as soon as condition \textbf{(P)} is satisfied. Note that this is an assumption on $|f|$ which is a-priori unknown if only samples of $|\mathcal{G}f|$ are available. However, this information is encoded in the spectrogram of $f$: we will show that the modulus $|f|$ of any $f \in V_\beta^\infty(\varphi)$ can be approximated in a uniformly and \emph{globally stable} way without imposing any additional assumption on $f$, see the discussion following Corollary \ref{cor:tensor_stability}, as well as Lemma \ref{lma:local_stability}. Based on a reconstruction of $|f|$ the resulting algorithm presented in Section \ref{sec:algo} first detects points $p_1, \dots, p_J$ such that condition \textbf{(P)} is satisfied and finally uses the numerical approximation routine $\mathcal{R}$ to provide a reconstruction of $f$. In this way, the information on the location of points $p_j$ can be omitted.

Finally, we study the growth behavior of the number of spectrogram samples needed to approximate a function on an interval $[-s,s]$. Assume that $f$ has the property that there exists a partition $\cdots < p_{-1} < p_0 < p_1 < \cdots$ of the real line such that $|f(p_j)| \geq \gamma$ for some $\gamma>0$ and all $j \in \Z$. Suppose we aim to approximate $f$ on the interval $[-s,s]$ using the numerical approximation routine $\mathcal{R}=\mathcal{R}(s)$. How does the minimal number of spectrogram samples $\mathcal{N}(s)$ grow so that $\mathcal{R}(s)$ achieves the bound \eqref{epsbound} for a fixed $\varepsilon>0$?

\begin{theorem}\label{thmD}
Let $\cdots < p_{-1} < p_0 < p_1 < \cdots$ be a partition of the real line such that $\sup_{j \in \Z} (p_{j+1}-p_j) < \infty$.
Let $\gamma > 0$ and $f \in V_\beta^\infty(\varphi)$ such that $|f(p_j)| \geq \gamma$ for all $j \in \Z$. Assume that the spectrogram samples $\mathfrak S$ are noiseless, $\eta = 0$, and $\varepsilon$ is given as in Theorem \ref{thmC}. 
Then the minimal number of spectrogram samples $\mathcal{N}(s)$ needed to achieve the bound \eqref{epsbound} on the interval $[-s,s]$ using the numerical approximation routine $\mathcal{R}$ satisfies
$$
\mathcal{N}(s) \lesssim_{\sigma,\beta} \log \left ( \frac{s\| c \|_\infty^2}{\varepsilon} \right )^{\frac{3}{2}} \left ( s + \log \left ( \frac{s\| c \|_\infty^2}{\varepsilon} \right ) \right ). 
$$
\end{theorem}

Theorems \ref{thmA:explicit_reconstruction}, \ref{thmB}, \ref{thmC} and \ref{thmD} are a direct consequence of Theorems \ref{thm:explicit_reconstruction}, \ref{thm:main_stability}, \ref{thm:qualitative} and Corollary \ref{cor:upper_growth_estimate} that will be proved in Sections \ref{sec:uaer}, \ref{sec:stab}, \ref{sec:atgss} and \ref{sec:atgss}, respectively.

\begin{figure}
\centering
\hspace*{-1.7cm}
   \includegraphics[width=15.8cm]{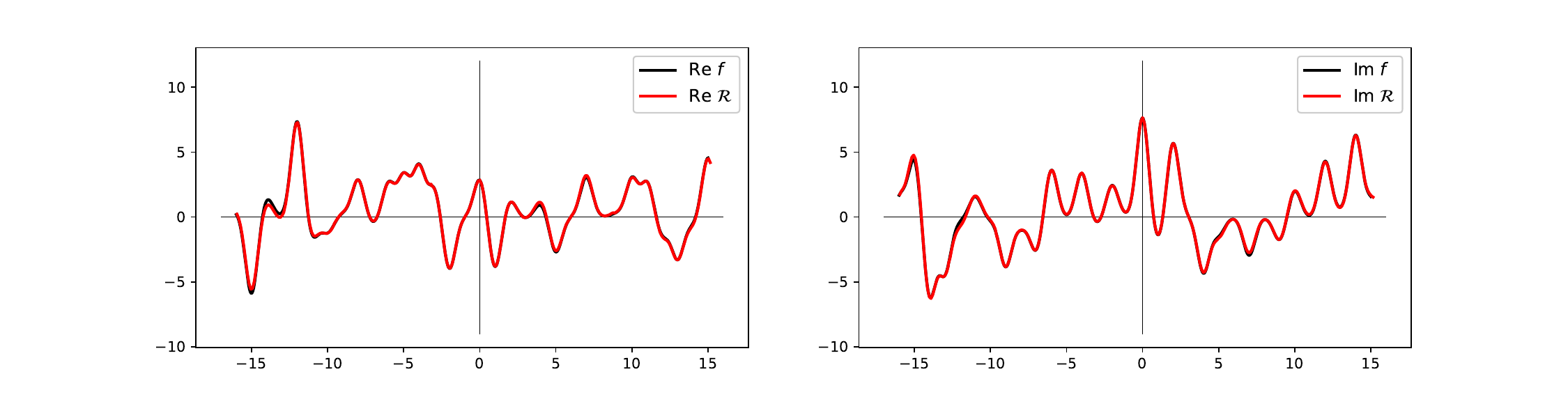}
\caption{Reconstruction of a complex-valued function $f \in V_\beta^\infty(\varphi)$ ($\beta = 1, \varphi(t)=e^{-\pi t^2}$) on the interval $[-16,16]$ using the numerical approximation routine $\mathcal{R}$ with spectrogram samples located at $X= \tfrac{1}{2} \{ -40, \dots, 40 \} \times \tfrac{1}{12}\{ -60, \dots, 60 \}$. To each spectrogram sample, Gaussian noise with mean zero and standard deviation 0.001 is added, i.e. the measurement matrix $\mathfrak{S}$ takes the form $\mathfrak{S} = |\mathcal{G}f(X)|^2+\mathcal{N}(0,\sigma^2), \sigma=0.001$.}
\label{fig:start_example}
\end{figure}

\subsection{Outline}

The article is structured as follows: in Section \ref{section2} we present the necessary general theory on shift-invariant spaces which will be used throughout the article. Section \ref{section3} starts with an abstract discussion on the reconstruction of arbitrary maps from their tensor product. These ideas will be applied to Gaussian shift-invariant spaces and lead to an explicit inversion formula from spectrogram measurements which makes use of a biorthogonal expansion. The section ends with a stability analysis of the derived reconstruction formula. In the final section, Section \ref{sec:algo}, we present and analyze an algorithmic way of approximating functions in Gaussian shift-invariant spaces from finitely many spectrogram samples. Moreover, we elaborate that the obtained algorithm approximates signals in Paley-Wiener spaces.

\section{Preliminaries on shift-invariant spaces}\label{section2}

\subsection{General theory}

For $\beta>0, p \in [1,\infty]$ and $\phi \in L^p(\R)$ let the shift-invariant space $V_\beta^p(\phi)$ be defined as in \eqref{def:si_space}. Recall that a sequence $(f_n)_n \subset \mathcal{H}$ in a separable Hilbert space $\mathcal{H}$ is said to be a Riesz basis for $\mathcal{H}$ if it is complete in $\mathcal{H}$ and there exist positive constants $0<A\leq B$ such that for arbitrary $n \in \N$ and arbitrary $c_1,\dots, c_n \in \C$ one has
\begin{equation}
    A \sum_{j=1}^n |c_j|^2 \leq \left \| \sum_{j=1}^n c_j f_j \right \|^2 \leq B \sum_{j=1}^n |c_j|^2.
\end{equation}
The optimal constants $A$ and $B$ are called the lower and upper bound of $(f_n)_n$. Every Riesz basis with lower and upper bounds $A$ and $B$ is a frame with frame constants $A$ and $B$. Associate to a Riesz basis $(f_n)_n$ its frame operator $S:\mathcal{H} \to \mathcal{H}$, defined by
$
Sf = \sum_{n \in \Z} \langle f,f_n \rangle f_n.
$
Then $S$ is a positive, invertible operator and $(S^{-1}f_n)_n$ is biorthogonal to $(f_n)_n$, i.e.
$$
\langle f_n, S^{-1}f_k \rangle = \delta_{nk} \coloneqq \begin{cases} 
      1 & n=k \\
      0 & n \neq k
   \end{cases}.
$$
Now let $\mathcal{H} = V_\beta^2(\phi)$ be a shift-invariant space with a square-integrable generator $\phi$. Define the 1-periodic map $\Phi_\beta : \R \to \R$ via
\begin{equation}\label{eq:Phi}
    \Phi_\beta(t) = \sum_{n\in\Z} | \hat \phi (\tfrac{t+n}{\beta})|^2
\end{equation}
where $\hat{\phi} = \ft \phi$ denotes the Fourier transform of $\phi$ which is defined on $L^1(\R) \cap \lt$ by
$$
\hat{\phi}(\omega) = \ft \phi(\omega) = \int_\R \phi(t) e^{-2\pi i \omega t} \, dt
$$
and extends to a unitary operator on $\lt$. The map $\Phi_\beta$ characterizes the property of $(T_{\beta n} \phi)_n$ being a Riesz basis for $V_\beta^2(\phi)$ \cite[Theorem 9.2.5]{christensenBook}.

\begin{theorem}\label{thm:riesz_basis_characterization}
Let $\phi \in \lt$ and $\beta>0$. Suppose that there exists $0<A\leq B < \infty$ such that
\begin{equation}\label{eq:riesz_basis_characterization}
    \beta A \leq \Phi_\beta(t) \leq \beta B
\end{equation}
for almost every $t \in [0,1]$. Then $V_\beta^2(\phi)$ is a closed subspace of $L^2(\R)$ and $(T_{\beta n}\phi)_n$ is a Riesz basis for $V_\beta^2(\phi)$ with bounds $A$ and $B$.
\end{theorem}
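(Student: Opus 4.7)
The plan is to transport the problem to the Fourier side, periodize the resulting integral so that $\Phi_\beta$ emerges naturally, and then read off the Riesz bounds directly from the two-sided bound on $\Phi_\beta$. This is the standard Fourier-analytic route, and once the identification with $\Phi_\beta$ is achieved the conclusion is essentially automatic.

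First I would fix a finitely supported sequence $c=(c_n)$, form the $1/\beta$-periodic symbol $m_c(\omega) = \sum_n c_n e^{-2\pi i \beta n \omega}$, and apply Plancherel together with the identity $\widehat{T_{\beta n}\phi}(\omega) = e^{-2\pi i \beta n \omega}\hat{\phi}(\omega)$ to obtain
$$\left\| \sum_n c_n T_{\beta n}\phi \right\|_{L^2(\R)}^2 = \int_\R |m_c(\omega)|^2 |\hat{\phi}(\omega)|^2\, d\omega.$$
Splitting $\R$ into the translates $[k/\beta,(k+1)/\beta)$, using $1/\beta$-periodicity of $m_c$ on each piece, and substituting $t=\beta\omega$ converts this into $\frac{1}{\beta}\int_0^1 |m_c(t/\beta)|^2\, \Phi_\beta(t)\, dt$. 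Since $m_c(t/\beta) = \sum_n c_n e^{-2\pi i n t}$ is an ordinary Fourier series on $[0,1]$, Parseval gives $\int_0^1 |m_c(t/\beta)|^2\, dt = \sum_n |c_n|^2$, so the hypothesis $\beta A \leq \Phi_\beta(t) \leq \beta B$ instantly yields the finite-sequence Riesz inequality
$$A \sum_n |c_n|^2 \leq \left\| \sum_n c_n T_{\beta n}\phi \right\|_{L^2(\R)}^2 \leq B \sum_n |c_n|^2.$$

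The second step is to lift this to all of $\ell^2(\Z)$ and to deduce the structural conclusions. The upper bound shows that for every $c \in \ell^2(\Z)$ the partial sums of $\sum_n c_n T_{\beta n}\phi$ are Cauchy in $L^2(\R)$, hence the synthesis operator $T:\ell^2(\Z) \to L^2(\R)$, $Tc \coloneqq \sum_n c_n T_{\beta n}\phi$, is well-defined and bounded, and the norm equivalence persists for all $c \in \ell^2(\Z)$ by continuity. The lower bound makes $T$ bounded below, so $T$ is injective with closed range; since this range is by definition $V_\beta^2(\phi)$, closedness follows at once. Completeness of $(T_{\beta n}\phi)_n$ in $V_\beta^2(\phi)$ is immediate from the density of finitely supported sequences in $\ell^2(\Z)$, and together with the norm equivalence this is precisely the Riesz basis property with bounds $A$ and $B$.

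I do not expect a serious obstacle here. The only point requiring a little care is the interchange of the infinite sum defining $\Phi_\beta$ with the integral over $[0,1/\beta)$, but this is justified by Tonelli since the integrand is nonnegative; everything else reduces to Plancherel and the orthonormality of $\{e^{-2\pi i n t}\}_{n\in\Z}$ on $[0,1]$.
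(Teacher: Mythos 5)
Your proof is correct, and it is essentially the standard argument: the paper does not prove this statement itself but cites \cite[Theorem 9.2.5]{christensenBook}, whose proof proceeds by exactly your route — Plancherel, periodization of $|\hat\phi|^2$ to produce $\Phi_\beta$, Parseval for the symbol $m_c$, and then extension from finite sequences to $\ell^2(\Z)$ to get closedness of the range of the synthesis operator. The only points needing care (Tonelli for the sum--integral interchange, convergence of the series for general $c\in\ell^2(\Z)$ via the upper bound, and identifying the range of the synthesis operator with $V_\beta^2(\phi)$ as defined in \eqref{def:si_space}) are all handled adequately in your write-up.
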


If $\phi$ satisfies condition \eqref{eq:riesz_basis_characterization} and if $S : V_\beta^2(\phi) \to V_\beta^2(\phi)$ is the frame operator associated with $(T_{\beta n} \phi)_n$ then $S$ commutes with $\beta\Z$-shifts, implying that $(S^{-1}T_{\beta n}\phi)_n = (T_{\beta n}\tilde \phi)_n$ with $\tilde \phi = S^{-1}\phi$ \cite[Lemma 9.4.1]{christensenBook}. Accordingly, the canonical dual frame $(S^{-1}T_{\beta n} \phi)_n$ has the same structure as $(T_{\beta n} \phi)_n$.

\begin{definition}
Let $\beta >0$ and $\phi \in \lt$ such that $(T_{\beta n} \phi)_n$ is a Riesz basis for $V_\beta^2(\phi)$. If $S$ is the frame operator associated with $(T_{\beta n} \phi)_n$ then $\widetilde \phi \coloneqq S^{-1}\phi $ is called the dual generator of $\phi$.
\end{definition}

Clearly, the dual generator generates the same shift-invariant space, i.e. $V_\beta^2(\phi) = V_\beta^2(\widetilde \phi)$. In the situation of Theorem \ref{thm:riesz_basis_characterization}, the map $\widetilde \phi$ can be derived explicitly. To do so, define in a similar fashion as above the $\frac{1}{\beta}$-periodization of $|\hat \phi|^2$ as
\begin{equation*}
    \Psi_\beta(t) = \sum_{n \in \Z} | \hat \phi (t+\tfrac{n}{\beta}) |^2.
\end{equation*}
The dual generator is then given as a formula of $\hat \phi$ and $\Psi$.

\begin{theorem}\label{thm:dual_generator}
Suppose that $(T_{\beta n} \phi)_n$ is a Riesz basis for $V_\beta^2(\phi)$ with frame operator $S$. Let $D = \{ t \in \R : \Psi_\beta(t) \neq 0 \}$ and let $\bone_D$ be the indicator function of $D$. If the map $\theta$ is defined as
$$
\theta (\omega) = \frac{\beta \hat \phi (\omega)}{\Psi_\beta(\omega)} \bone_D(\omega)
$$
then $\tilde \phi \coloneqq \ift \theta$ is the dual generator of $\phi$.
\end{theorem}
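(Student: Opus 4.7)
The natural starting point is the characterization of $\tilde\phi$ via biorthogonality: since $(T_{\beta n}\tilde\phi)_n$ is the dual Riesz basis of $(T_{\beta n}\phi)_n$, one has
$$
\langle \tilde\phi,\,T_{\beta n}\phi\rangle = \delta_{n,0} \qquad \text{for all } n \in \Z.
$$
Moreover, the frame operator $S$ maps $V_\beta^2(\phi)$ to itself, so $\tilde\phi = S^{-1}\phi \in V_\beta^2(\phi)$. I would turn the biorthogonality relation into a condition on the Fourier side. By Plancherel, together with $\widehat{T_{\beta n}\phi}(\omega) = e^{-2\pi i\beta n\omega}\hat\phi(\omega)$, the biorthogonality becomes
$$
\int_\R \hat{\tilde\phi}(\omega)\overline{\hat\phi(\omega)}\, e^{2\pi i\beta n\omega}\,d\omega = \delta_{n,0}, \qquad n \in \Z.
$$

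The plan is then to periodize. Splitting $\R$ into intervals of length $1/\beta$ and using that $\omega \mapsto e^{2\pi i\beta n\omega}$ is $1/\beta$-periodic, the left-hand side equals
$$
\int_0^{1/\beta} \Bigl(\sum_{k\in\Z}\hat{\tilde\phi}(\omega+k/\beta)\,\overline{\hat\phi(\omega+k/\beta)}\Bigr)\, e^{2\pi i\beta n\omega}\,d\omega.
$$
By uniqueness of Fourier coefficients for $1/\beta$-periodic functions on $[0,1/\beta]$, the bracketed expression must equal the constant $\beta$ almost everywhere. Next I would insert the ansatz $\hat{\tilde\phi} = m\,\hat\phi$ with $m$ a $1/\beta$-periodic function: this is legitimate because every element of $V_\beta^2(\phi)$ has a Fourier transform of exactly this form (its defining sequence gives rise to the periodic multiplier $m$). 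Substituting and using the $1/\beta$-periodicity of $m$ yields
$$
m(\omega)\,\Psi_\beta(\omega) = \beta \quad \text{a.e.}
$$
On $D = \{\Psi_\beta \neq 0\}$ this forces $m(\omega) = \beta/\Psi_\beta(\omega)$, while on the complement of $D$ both $\hat\phi$ and $\hat{\tilde\phi}$ must vanish. Thus $\hat{\tilde\phi}(\omega) = \theta(\omega)$ as claimed, and $\tilde\phi = \ift\theta$.

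The main technical point to verify is that $\theta$ actually lies in $L^2(\R)$, so that $\ift\theta$ is a well-defined element of $V_\beta^2(\phi)$ rather than a merely formal expression. This is where the Riesz basis hypothesis enters: it is equivalent (via the substitution $t = \beta\omega$ that relates $\Phi_\beta$ and $\Psi_\beta$) to $\Psi_\beta$ being essentially bounded above and below by positive constants on $D$, hence $m = \beta/\Psi_\beta \in L^\infty(D)$ and
$$
\|\theta\|_{L^2(\R)}^2 = \int_D \frac{\beta^2 |\hat\phi(\omega)|^2}{\Psi_\beta(\omega)^2}\,d\omega \lesssim \|\hat\phi\|_{L^2}^2 < \infty.
$$
Finally, uniqueness of the dual generator within $V_\beta^2(\phi)$, together with the fact that the function $\ift\theta$ we constructed satisfies both the membership and the biorthogonality relation, identifies it with $S^{-1}\phi$. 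The only nontrivial step beyond routine Fourier-analytic manipulations is thus the quantitative control of $\theta$ via the Riesz basis bounds; the rest is bookkeeping.
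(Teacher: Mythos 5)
Your argument is correct, but it reaches the conclusion by a different route than the paper, so a comparison is worthwhile. The paper's proof is a \emph{verification}: it first shows $\ift\theta \in V_\beta^2(\phi)$ (citing \cite[Lemma 9.3.2]{christensenBook}) and then applies the frame operator to the candidate, computing $\ft S(\ift\theta)=\bigl(\sum_k\langle \theta, M_{-k\beta}\hat\phi\rangle M_{-k\beta}\bigr)\hat\phi$ and identifying the resulting periodic multiplier with $\bone_D$ via the same unfolding $\int_\R=\sum_n\int_0^{1/\beta}$ that you use, so that $S(\ift\theta)=\phi$, i.e. $\ift\theta=S^{-1}\phi$. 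You instead \emph{derive} the formula: starting from the biorthogonality $\langle\tilde\phi,T_{\beta n}\phi\rangle=\delta_{n0}$ and the periodic-multiplier representation $\hat{\tilde\phi}=m\hat\phi$ of elements of $V_\beta^2(\phi)$, you solve $m\,\Psi_\beta=\beta$ a.e. Your route has two small advantages: it explains where $\theta$ comes from rather than verifying a given candidate, and both $\theta\in L^2(\R)$ and $\ift\theta\in V_\beta^2(\phi)$ come out automatically once $\theta$ is identified a.e.\ with $\hat{\tilde\phi}$ — so the quantitative estimate in your last paragraph (and the appeal to uniqueness of the dual generator) is not strictly needed; note also that $m\Psi_\beta=\beta$ a.e.\ already forces $\Psi_\beta\neq 0$ a.e., so the case distinction on the complement of $D$ is automatic. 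The paper's route, conversely, does not need the converse implication ``Riesz basis $\Rightarrow$ bounds on $\Psi_\beta$'' that you invoke; that equivalence is indeed true (it is the full characterization behind Theorem \ref{thm:riesz_basis_characterization}, of which the paper states only one direction), so your use of it is legitimate, just slightly more than the statement as printed provides. The computational core — periodization of $\hat{\tilde\phi}\overline{\hat\phi}$ and identification of Fourier coefficients of a $1/\beta$-periodic $L^1$ function — is common to both arguments.
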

\begin{proof}
This follows from a generalization of \cite[Proposition 9.4.2]{christensenBook}. For the convenience of the reader, we provide a proof in the Appendix \ref{appendix:A}.
\end{proof}

Note that the structure of $V_\beta^2(\phi)$ can be enriched by imposing further properties on the generator $\phi$. For instance, if $\phi$ is continuous and belongs to the Wiener Amalgam space then $V_\beta^2(\phi)$ is a reproducing kernel Hilbert space (RKHS). We refer to \cite{GroechenigSurvey,christensenBook,CHRISTENSEN200448} for results in this direction.

\subsection{Gaussian generators}\label{subsection:gaussian_generators}

We now specialize $\phi$ to be the Gaussian $\varphi^\sigma$ with variance $\sigma^2$ as defined in Equation \eqref{def:gaussian}. For the sake of simplicity we write $\varphi = \varphi^\sigma$ with the implicit understanding that $\varphi$ has variance $\sigma^2$. Properties of $V_\beta^2(\varphi)$ can be expressed in terms of the Jacobi theta function of third kind. Recall that this is the map $\vartheta_3 : \C \times (0,1) \to \C$,
$$
\vartheta_3(z,c) = \sum_{n \in \Z} c^{n^2}e^{2niz}.
$$
For every fixed $c \in (0,1)$, $\vartheta_3$ is an entire, $\pi$-periodic map in $z$. Using the results of the previous subsection gives

\begin{corollary}\label{cor:gaussian_sispace}
Let $\varphi$ be a centred Gaussian with variance $\sigma^2$. Then the $1$-periodization of $\varphi$ is given by
$$
\Phi_\beta(t) = \sigma \beta \sqrt \pi \vartheta_3(\pi t, \varphi(\tfrac{\beta}{\sqrt 2})).
$$
In particular, the system $(T_{\beta n}\varphi)_n$ constitutes a Riesz basis for $V_\beta^2(\varphi)$. Moreover, if the function $\Lambda : \R \to \C$ is defined as
$$
\Lambda(t) \coloneqq \frac{e^{-2\pi^2\sigma^2t^2}}{\vartheta_3(\pi\beta t, \varphi(\frac{\beta}{\sqrt{2}}))}
$$
then the dual-generator of $\varphi$ is given by $\widetilde \varphi = \sqrt{2} \ift \Lambda$.
\end{corollary}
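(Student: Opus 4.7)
The plan is to compute everything by hand using the Fourier transform of a Gaussian and Poisson summation, then combine with Theorems \ref{thm:riesz_basis_characterization} and \ref{thm:dual_generator}.

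First I would record the identity $\hat{\varphi}(\omega) = \sigma\sqrt{2\pi}\,e^{-2\pi^2\sigma^2\omega^2}$, so that $|\hat{\varphi}(\omega)|^2 = 2\pi\sigma^2 e^{-4\pi^2\sigma^2\omega^2}$. The definition \eqref{eq:Phi} of $\Phi_\beta$ then gives
$$
\Phi_\beta(t) = 2\pi\sigma^2 \sum_{n \in \Z} e^{-4\pi^2\sigma^2(t+n)^2/\beta^2}.
$$
To bring this into the $\vartheta_3$-form $\sum_k c^{k^2} e^{2ikz}$, I would apply the Poisson summation formula to the function $g(x) = e^{-4\pi^2\sigma^2 x^2/\beta^2}$, using the standard Gaussian Fourier transform $\widehat{g}(k) = \tfrac{\beta}{2\sigma\sqrt{\pi}}\,e^{-\beta^2 k^2/(4\sigma^2)}$. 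The resulting series is
$$
\sum_{n \in \Z} e^{-4\pi^2\sigma^2(t+n)^2/\beta^2} = \frac{\beta}{2\sigma\sqrt{\pi}}\sum_{k \in \Z} e^{-\beta^2 k^2/(4\sigma^2)} e^{2\pi i k t}.
$$
Recognising $e^{-\beta^2/(4\sigma^2)} = \varphi(\beta/\sqrt{2})$ and setting $z = \pi t$ identifies the right-hand side with $\tfrac{\beta}{2\sigma\sqrt{\pi}}\vartheta_3(\pi t, \varphi(\beta/\sqrt{2}))$. Multiplying by $2\pi\sigma^2$ gives the claimed formula for $\Phi_\beta$.

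Next I would verify the Riesz basis assertion. For $c \in (0,1)$, the function $t \mapsto \vartheta_3(\pi t, c)$ is real-valued, $1$-periodic and continuous; moreover it is strictly positive on $\R$, which one can read off directly from the Jacobi triple product (the critical factor at $t = 1/2$ is $(1-c^{2n-1})^2 > 0$). Since $\Phi_\beta$ is therefore bounded above and below by positive constants, condition \eqref{eq:riesz_basis_characterization} of Theorem \ref{thm:riesz_basis_characterization} is satisfied and $(T_{\beta n}\varphi)_n$ is a Riesz basis for $V_\beta^2(\varphi)$.

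For the dual generator I would invoke Theorem \ref{thm:dual_generator}. Observe that by a change of variables $\Psi_\beta(\omega) = \Phi_\beta(\beta\omega)$, so the formula already derived yields
$$
\Psi_\beta(\omega) = \sigma\beta\sqrt{\pi}\,\vartheta_3(\pi\beta\omega, \varphi(\beta/\sqrt{2})),
$$
which is strictly positive, hence $D = \R$ and the indicator function drops out. Substituting this together with the explicit Gaussian $\hat{\varphi}$ into the expression $\theta(\omega) = \beta \hat{\varphi}(\omega)/\Psi_\beta(\omega)$ and simplifying the constants gives $\theta(\omega) = \sqrt{2}\,\Lambda(\omega)$, and applying $\mathcal{F}^{-1}$ yields $\widetilde{\varphi} = \sqrt{2}\,\mathcal{F}^{-1}\Lambda$.

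The only step with any real content is the Poisson summation identity that converts the lattice-sum of Gaussians into a $\vartheta_3$-series; everything else is bookkeeping of constants and an appeal to the two cited theorems. The positivity lower bound for $\vartheta_3$ on $\R$ is the minor subtlety, but it is immediate from the triple product.
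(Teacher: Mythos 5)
Your proposal is correct and follows essentially the same route as the paper: compute $\hat{\varphi}$, convert the periodizations $\Phi_\beta$ and $\Psi_\beta$ into $\vartheta_3$-series via Poisson summation, and then conclude with Theorems \ref{thm:riesz_basis_characterization} and \ref{thm:dual_generator}. The extra details you supply (the triple-product positivity of $\vartheta_3$ on $\R$ and the identity $\Psi_\beta(\omega)=\Phi_\beta(\beta\omega)$) are correct and merely make explicit what the paper's shorter proof leaves implicit.
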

\begin{proof}
The identity $\ft \varphi(t) = \sigma \sqrt{2\pi} e^{-2\pi^2\sigma^2 t^2}$ implies that the 1-periodization $\Phi_\beta$ is given by
\begin{equation}
\begin{split}
\Phi_\beta(t) & =  2\pi \sigma ^2 \sum_n e^{-\frac{4\pi^2\sigma^2}{\beta^2}(t+n)^2} = \sigma \beta \sqrt \pi \sum_n e^{-\frac{\beta^2 n^2}{4 \sigma^2}} e^{2 \pi i n t} \\
& = \sigma \beta \sqrt \pi \vartheta_3(\pi t, \varphi(\tfrac{\beta}{\sqrt 2})),
\end{split}
\end{equation}
where the second equality follows from Poisson's summation formula. In a similar fashion, we obtain
\begin{equation}
\Psi_\beta(t) = \sqrt{\pi} \sigma \beta \vartheta_3(\pi \beta t, \varphi(\tfrac{\beta}{\sqrt{2}})),
\end{equation}
thereby proving the statement using Theorem \ref{thm:dual_generator}.
\end{proof}

\begin{figure}[b]\label{fig:dual_window}
\centering
\hspace*{-1.4cm}
   \includegraphics[width=15cm]{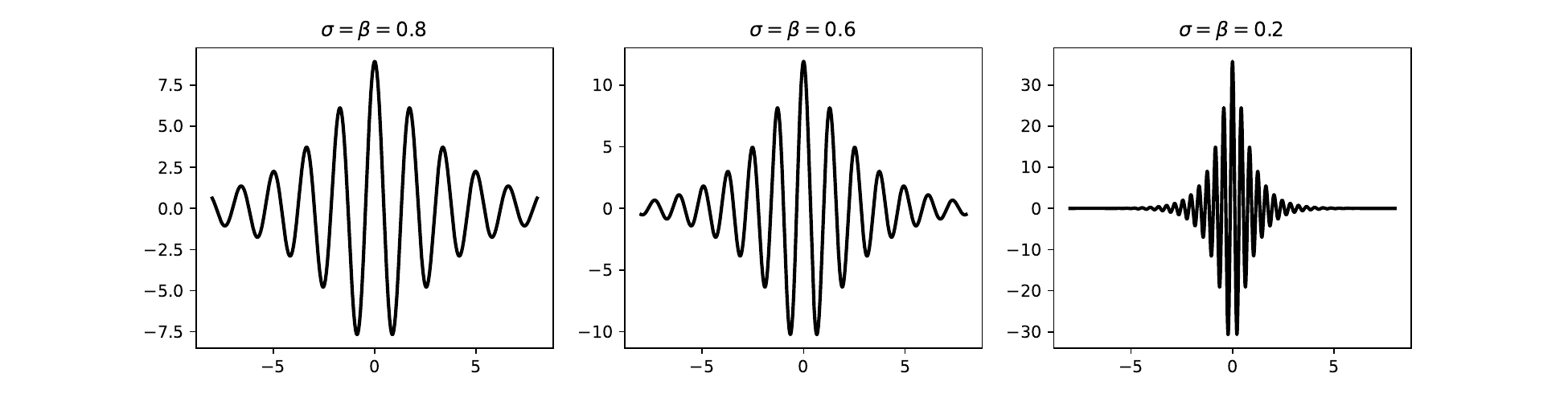}
\caption{Dual generators of Gaussians $\varphi^\sigma(t)$ associated with the shift-invariant space $V_\beta^2(\varphi)$ with step-sizes $\beta = 0.8, 0.6, 0.2$ and $\sigma=\beta$.}
\end{figure}

Note that the zeros of $\vartheta_3$ are bounded away from the real axis. Hence, the reciprocal theta function $\frac{1}{\vartheta_3}$ defines an analytic and periodic function in an open strip containing the real axis. It follows that the Fourier coefficients $a_n = a_n(c)$ of $\frac{1}{\vartheta_3}$ decay at least exponentially. If
\begin{equation}\label{theta_identity_1}
    \frac{1}{\vartheta_3(\pi \beta t,c)} = \sum_{n \in \Z} a_n e^{2 \pi i \beta n t }
\end{equation}
with $c = \varphi(\tfrac{\beta}{\sqrt{2}})$ then the identity $\ft(s \mapsto e^{-2\pi^2\sigma^2s^2})(t) = (\sigma\sqrt{2\pi})^{-1}\varphi(t)$ implies that $\widetilde \varphi$ can be written as
$$
\widetilde \varphi (t) = \frac{1}{\sigma \sqrt{\pi}} \sum_n a_n T_{-\beta n} \varphi(t).
$$
Hence, up to a multiplicative constant, the defining sequence of the dual generator is given by the Fourier coefficients of the reciprocal theta function and therefore the decay of $\widetilde \varphi$ matches with the decay of $(a_n)_n$. In fact, the Fourier coefficients of $\frac{1}{\vartheta_3}$ satisfy precisely an exponential decay (see a derivation by Janssen \cite[p. 178]{JANSSEN1996165} based on results of Whittaker and Watson \cite[p. 489]{whittaker_watson_1996}): if the constant $\xi = \xi(c)$ is defined by
\begin{equation}\label{theta_identity_2}
    \xi = \sum_{n \in \mathbb Z} (-1)^n (2n+1)c^{(n+\frac 1 2)^2}
\end{equation}
then
\begin{equation}\label{theta_identity_3}
    a_n = (-1)^n\frac{2}{\xi} \sum_{m=0}^\infty (-1)^m c^{(m+\frac 1 2)(2|n|+m+\frac 1 2)}.
\end{equation}
This explicit formula for the Fourier coefficients will be used in the upcoming sections to derive decay estimates on the dual generator leading to stability and approximation results of the Gabor phase retrieval problem. We conclude the present section with the observation that if $f$ has a Gaussian shift-invariant structure and is merely assumed to be bounded then the inner products $\langle f, T_{\beta n} \varphi \rangle \coloneqq \int_\R f(t) T_{\beta n} \varphi(t) \, dt$ remain well-defined and the map
\begin{equation}\label{eq:expansion}
    t \mapsto \sum_n \langle f, T_{\beta n} \varphi \rangle T_{\beta n} \widetilde \varphi(t)
\end{equation}
defines a smooth function. If $f \in V_\beta^2(\varphi)$ then the series in \eqref{eq:expansion} converges both in $\lt$ and in $L^\infty(\R)$ to $f$. If $f \in V_\beta^\infty(\varphi)$ then the following can be said.

\begin{proposition}\label{prop:V_2_V_inf}
If $f \in V_\beta^\infty(\varphi)$ then $f(t)=\sum_n \langle f, T_{\beta n} \varphi \rangle T_{\beta n} \widetilde \varphi(t)$ where the series on the right converges uniformly on compact intervals of the real line.
\end{proposition}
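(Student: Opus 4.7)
The plan is to expand the inner products $b_n := \langle f, T_{\beta n}\varphi\rangle$ explicitly as a convolution of the defining sequence with the correlation sequence of $\varphi$, leverage exponential decay throughout to obtain absolute locally uniform convergence of the series, and finally collapse a double series via biorthogonality to identify the limit with $f$.

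First, for $f = \sum_k c_k T_{\beta k}\varphi \in V_\beta^\infty(\varphi)$ with $(c_k)_k \in \ell^\infty(\Z)$, the exponential decay of $\varphi$ makes $f$ bounded and the double integrand $\sum_k |c_k|\,\varphi(t-\beta k)\,\varphi(t-\beta n)$ uniformly $L^1$-summable, so dominated convergence justifies interchanging sum and integral in $\langle f, T_{\beta n}\varphi\rangle$ to give
\[
b_n = \sum_k c_k\,\gamma_{n-k}, \qquad \gamma_m := \langle T_{\beta m}\varphi,\varphi\rangle = \sigma\sqrt\pi\,e^{-\beta^2 m^2/(4\sigma^2)} \in \ell^1(\Z).
\]
Young's inequality yields $\|b\|_{\ell^\infty} \leq \|c\|_{\ell^\infty}\|\gamma\|_{\ell^1} < \infty$. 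Moreover, the explicit expansion $\widetilde\varphi = (\sigma\sqrt\pi)^{-1}\sum_m a_m T_{-\beta m}\varphi$ recorded in Subsection~\ref{subsection:gaussian_generators} together with the exponential decay of $(a_m)_m$ from \eqref{theta_identity_3} shows that $\widetilde\varphi$ itself decays exponentially; in particular, $\sup_{t\in K}\sum_n|\widetilde\varphi(t-\beta n)|<\infty$ for every compact $K\subset\R$. The Weierstrass $M$-test then forces the series $\sum_n b_n T_{\beta n}\widetilde\varphi(t)$ to converge absolutely and uniformly on $K$.

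To identify the limit with $f$, I substitute the translate expansion of $\widetilde\varphi$; the three combined exponential decays of $(b_n)_n$, $(a_m)_m$, and $\varphi$ provide absolute convergence of the resulting double series uniformly on compacta, so Fubini-style reordering is justified and yields
\[
\sum_n b_n\, T_{\beta n}\widetilde\varphi(t) = \frac{1}{\sigma\sqrt\pi}\sum_\ell \left(\sum_m a_m\,b_{\ell+m}\right) T_{\beta\ell}\varphi(t).
\]
The biorthogonality $\langle T_{\beta k}\varphi, T_{\beta\ell}\widetilde\varphi\rangle = \delta_{k\ell}$, when expanded using the same translate formula for $\widetilde\varphi$, translates into the convolution identity $(a*\gamma)_m = \sigma\sqrt\pi\,\delta_{m,0}$. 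Using evenness of the sequence $(a_m)_m$ (visible from \eqref{theta_identity_3}) together with $b_n = (c*\gamma)_n$, the inner sum collapses:
\[
\sum_m a_m\,b_{\ell+m} = \sum_k c_k \sum_m a_m \gamma_{\ell-k+m} = \sum_k c_k\,(a*\gamma)_{\ell-k} = \sigma\sqrt\pi\,c_\ell,
\]
so the double series equals $\sum_\ell c_\ell T_{\beta\ell}\varphi(t) = f(t)$.

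The main technical step is the justification of reordering the double series, which reduces to the uniform finiteness of $\sum_n\sum_m |b_n|\,|a_m|\,|\varphi(t-\beta(n-m))|$ for $t\in K$; this is a routine consequence of the three exponential decays at play. Everything else is algebra anchored in the biorthogonality of $\varphi$ and $\widetilde\varphi$.
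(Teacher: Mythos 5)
Your proof is correct, but it follows a genuinely different route from the paper. The paper argues softly: it truncates $f$ to $g_M=\sum_{|m|\le M}c_mT_{\beta m}\varphi\in V_\beta^2(\varphi)$, where the biorthogonal expansion is already known to hold, and then controls two tails (the tail of the expansion, using the exponential decay of $\widetilde\varphi$ and the uniform boundedness of $\langle g_M,T_{\beta k}\varphi\rangle$, and the tail $f-g_M$) before passing to the limit $M\to\infty$ by dominated convergence, uniformly on $[-s,s]$. You instead compute everything in closed form: $\langle f,T_{\beta n}\varphi\rangle=(c*\gamma)_n$ with the Gaussian correlation sequence $\gamma_m=\sigma\sqrt{\pi}\,e^{-\beta^2m^2/(4\sigma^2)}$, the translate expansion $\widetilde\varphi=(\sigma\sqrt{\pi})^{-1}\sum_m a_mT_{-\beta m}\varphi$ from Section \ref{subsection:gaussian_generators}, and the discrete identity $(a*\gamma)_j=\sigma\sqrt{\pi}\,\delta_{j,0}$, which you correctly extract from the biorthogonality $\langle T_{\beta k}\varphi,T_{\beta \ell}\widetilde\varphi\rangle=\delta_{k\ell}$ (equivalently, from $\vartheta_3\cdot\vartheta_3^{-1}=1$ at the level of Fourier coefficients); resumming then collapses the double series to $\sum_\ell c_\ell T_{\beta\ell}\varphi=f$. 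The paper's argument is more robust — it uses only the decay of $\widetilde\varphi$ and the validity of the expansion on $V_\beta^2$, so it transfers to other generators without explicit formulas — whereas yours is self-contained, yields absolute and locally uniform convergence directly, and identifies the limit by pure algebra, at the price of leaning on the Gaussian/theta-function structure. One small inaccuracy: in justifying the reordering you refer to the "exponential decays of $(b_n)_n$, $(a_m)_m$, and $\varphi$", but $b=c*\gamma$ with $c\in\ell^\infty(\Z)$ is in general only bounded, not decaying (take $c_n\equiv 1$). This does not harm the argument: the uniform finiteness of $\sum_n\sum_m|b_n|\,|a_m|\,|\varphi(t-\beta(n-m))|$ on compacta follows already from $\|b\|_{\ell^\infty}<\infty$, $a\in\ell^1(\Z)$ and the bound $\sup_{t\in\R}\sum_n\varphi(t-\beta n)<\infty$, and the analogous bound $\|a\|_{\ell^1}\|c\|_{\ell^\infty}\|\gamma\|_{\ell^1}<\infty$ justifies the Fubini step in the collapse of the inner sum; you should phrase it that way.
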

\begin{proof}
See the Appendix \ref{appendix:B}.
\end{proof}

We call $\sum_n \langle f, T_{\beta n} \varphi \rangle T_{\beta n} \widetilde \varphi$ the biorthogonal expansion of $f \in V_\beta^\infty(\varphi)$. Note that $V_\beta^\infty(\varphi)$ consists of smooth functions and is the largest among the spaces $V_\beta^p(\varphi)$ due to the inclusion $V_\beta^p(\varphi) \subset V_\beta^q(\varphi) \subset C^\infty(\R), 1 \leq p \leq q \leq \infty$ \cite[Corollary 2.5]{GroechenigSurvey}.

\section{Phase retrieval in Gaussian shift-invariant spaces}\label{section3}

\subsection{Uniqueness and explicit reconstruction}\label{sec:uaer}

Let $g \in L^2(\R)$ be a window function, $\mathcal{V}_gf$ the short-time Fourier transform of $f \in L^\infty(\R)$ and $\mathcal{G}f$ the Gabor transform of $f$.
We aim to recover $f$ from $|\mathcal{V}_gf(X)|$ where $X \subseteq \R^2$. Clearly, this will only be possible up to a global phase. In order to decide whether two functions agree up to global phase one can study their tensor product $f_\omega$ as defined in \eqref{def:tensor}.

\begin{proposition}\label{prop:tensorproduct_properties}
Let $f,h : \R \to \C$ be two complex-valued maps and denote by $F,H : \R^2 \to \C$ the functions $F(t,\omega) \coloneqq f_\omega(t), H(t,\omega) = h_\omega(t)$.
\begin{enumerate}
\setlength\itemsep{0em}
\item If $p \in \R$ such that $F(p,0) \neq 0$  then for every $\omega \in \R$ we have $$f(p+\omega)= \tau F(p,0)^{-\frac{1}{2}}\overline{F(p+\omega,\omega)}, \ \tau = \frac{f(p)}{|f(p)|}.$$
\item If $p \in \R$ such that $F(p,0) \neq 0$ and $H$ agrees with $F$ on the diagonal segment $D^r = \{ (p+\omega, \omega) : 0 \leq \omega \leq r \}, r \in [0,\infty),$ then there exists a $\tau \in \T$ such that $f(p+\omega)=\tau h(p+\omega)$ for every $|\omega| \leq r$.
\item If $F=H$ on $\R^2$ then there exists a  $\tau \in \T$ such that $f=\tau h$ on the entire real line.
\item Suppose that $p_1 < p_2 < \cdots < p_J \in \R$ such that $F(p_j,0) \neq 0$. For $j = 1, \dots, J-1$ define function $L_j$ and phases $\nu_j$ by
$$
L_j(\omega) = F(p_j,0)^{-\frac{1}{2}}\overline{F(p_j+\omega,\omega)}, \ \nu_j = \frac{L_j(p_{j+1}-p_j)}{|L(p_{j+1}-p_j)|}, \ \nu_0=1.
$$
Then there exists a $\tau \in \T$ with the following property: Whenever $t \in [p_1,p_J]$ with $t = p_j + \omega$ where $p_j$ is the largest value not exceeding $t$ then
\begin{equation}\label{eq:product_elementary}
f(t) = \tau \nu_1 \nu_2 \cdots \nu_{j-1} L_j(\omega).
\end{equation}
In particular, $f$ can be reconstructed on the interval $[p_1,p_J]$ from the values $F(X)$ where $X$ is the union of the diagonal line segments $D_j = \{ (p_j+\omega, \omega) : 0\leq \omega \leq p_{j+1}-p_j \}, j= 1,\dots, J-1$.
\end{enumerate}
\end{proposition}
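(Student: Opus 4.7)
The plan is to handle the four items in order, treating part (1) as the key identity and viewing (2)--(4) as successively richer consequences. For (1), I would simply unwind the definition $f_\omega(t) = f(t-\omega)\overline{f(t)}$. Then $F(p,0) = |f(p)|^2$ and $\overline{F(p+\omega,\omega)} = \overline{f(p)}\,f(p+\omega)$, so
\[
\tau\,F(p,0)^{-1/2}\,\overline{F(p+\omega,\omega)} = \frac{f(p)}{|f(p)|}\cdot\frac{\overline{f(p)}f(p+\omega)}{|f(p)|} = f(p+\omega),
\]
which is exactly the claim. One can think of this as a polarization-style identity that reconstructs $f$ from its tensor product once a nonzero reference value $f(p)$ is fixed.

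For (2), I would apply (1) separately to $f$ and to $h$. The hypothesis $F=H$ on $D^r$ (which includes $(p,0)$) gives $|f(p)|=|h(p)|$ and $F(p+\omega,\omega) = H(p+\omega,\omega)$ for the relevant $\omega$, so the two reconstruction formulas coincide up to the multiplicative factor $\tau = (f(p)/|f(p)|)\,\overline{(h(p)/|h(p)|)} \in \T$. For (3), the case $f\equiv 0$ forces $h\equiv 0$ from $|f|^2 = F(\cdot,0) = H(\cdot,0) = |h|^2$. Otherwise, the same moduli identity shows that $f$ and $h$ share the zero set $Z$, and the identity $f(t-\omega)\overline{f(t)} = h(t-\omega)\overline{h(t)}$ can be rearranged (where denominators are nonzero) to $\mu(t-\omega) = \mu(t)$ for $\mu := f/h \in \T$ on $\R\setminus Z$. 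Since any two points in $\R\setminus Z$ are related by a suitable translation, $\mu$ must be a unimodular constant $\tau$, and $f = \tau h$ holds trivially on $Z$ as well.

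For (4), the strategy is to iterate (1) at each anchor point $p_j$ to obtain phases $\tau_j = f(p_j)/|f(p_j)| \in \T$ with $f(p_j+\omega) = \tau_j L_j(\omega)$ for every $\omega \in \R$. Setting $\omega = p_{j+1}-p_j$ and noting $|L_j(p_{j+1}-p_j)| = |F(p_j,0)|^{-1/2}|f(p_j)\overline{f(p_{j+1})}| = |f(p_{j+1})|$, comparison with $f(p_{j+1}) = \tau_{j+1}|f(p_{j+1})|$ yields the recursion
\[
\tau_{j+1} = \tau_j\,\frac{L_j(p_{j+1}-p_j)}{|L_j(p_{j+1}-p_j)|} = \tau_j\,\nu_j.
\]
Unrolling with $\tau := \tau_1$ gives $\tau_j = \tau\,\nu_1\cdots\nu_{j-1}$, which is precisely \eqref{eq:product_elementary}. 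The final sentence of (4) is then immediate, since the data $F(p_j+\omega,\omega)$ for $0 \leq \omega \leq p_{j+1}-p_j$ are exactly the values of $F$ on the diagonal segment $D_j$.

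There is no serious technical obstacle in this proposition; it is a bookkeeping exercise around the elementary identity of (1). The only point that requires care is making sure the single global phase $\tau$ in (4) is consistent across all subintervals $[p_j,p_{j+1}]$, which is precisely what the recursion $\tau_{j+1} = \tau_j\nu_j$ controls. I would also flag as a minor subtlety that (2) yields the conclusion on the range of $\omega$ matching the available diagonal data, i.e.\ $\omega \in [0,r]$ corresponding to the segment $D^r$.
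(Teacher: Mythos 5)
Your proposal is correct and follows essentially the same route as the paper: the elementary identity $F(p+\omega,\omega)=f(p)\overline{f(p+\omega)}$ in (1), the resulting phase comparison in (2), and in (4) the same computation of $\nu_j=\frac{f(p_{j+1})|f(p_j)|}{f(p_j)|f(p_{j+1})|}$, with your recursion $\tau_{j+1}=\tau_j\nu_j$ being just the unrolled form of the paper's telescoping product $\nu_1\cdots\nu_{j-1}=\frac{|f(p_1)|f(p_j)}{f(p_1)|f(p_j)|}$. Your treatment of (3) via the quotient $\mu=f/h$ and translations is a slightly more self-contained variant of the paper's one-line deduction from (2), and your remark about the $\omega$-range in (2) correctly identifies a small imprecision in the statement rather than a flaw in your argument.
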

\begin{proof}
Observe that $F(p,0)=f_0(p)=|f(p)|^2$. The first statement then follows directly from the equations
$$
F(p+\omega,\omega) = f_\omega(p+\omega) = f(p)\overline{f(p+\omega)} = \frac{|f(p)|}{\overline{f(p)}} F(p,0)^{\frac{1}{2}} \overline{f(p+\omega)}.
$$
The second statement follows directly from the first and the third follows from the second. It remains to show Part 4. Let $L_j$ and $\nu_j$ be defined as above. We have by definition
$$
L_j(p_{j+1}-p_j) = \underbrace{F(p_j,0)^{-\frac{1}{2}}}_{>0} \overline{f(p_j)} f(p_{j+1}).
$$
Therefore, $\nu_j = \frac{f(p_{j+1})|f(p_{j})|}{f(p_{j})|f(p_{j+1})|}$ which gives $$\nu_1 \cdots \nu_{j-1} = \frac{|f(p_1)|f(p_j)}{f(p_1)|f(p_j)|}.$$ Thus,
$$
\tau \nu_1 \cdots \nu_{j-1} L_j(\omega) = f(p_j+\omega)
$$
with $\tau = \frac{f(p_1)}{|f(p_1)|}$ which further shows that $f$ can be reconstructed on $[p_1,p_J]$ up a global phase from the union of the diagonal line segments $D_j$.
\end{proof}

Part 4 of the preceding proposition can be interpreted as follows: if the tensor product is known on the diagonal line segments $D_j, 1 \leq j \leq J-1$, then $f$ can be reconstructed on the interval $[p_j,p_{j+1}]$ up to a global phase and this reconstruction is given by $L_j$. If on each such interval the local reconstruction is multiplied by the phase $\tau \nu_1 \nu_2 \cdots \nu_{j-1}$ then the resulting function defined on the union of all the intervals $[p_j,p_{j+1}]$ agrees up to the phase factor $\tau$ with $f$. We call the multiplication by $\nu_1 \nu_2 \cdots \nu_{j-1}$ a phase synchronization. The phase synchronization is possible whenever $f(p_j) \neq 0$. This motivates

\begin{definition}
Let $p_1,\dots,p_J \in \R$ and $\gamma > 0$. We say that $f: \R \to \C$ satisfies condition \textbf{(P)} if
\[
J \geq 2 \ \ \land \ \ p_1 < p_2 < \cdots < p_J \ \ \land \ \ |f(p_j)| \geq \gamma \ \forall j \in \{ 1, \dots, J \} \tag{\textbf{P}}.
\]
\end{definition}

Suppose now that $f$ is an element of the Gaussian shift-invariant space $V_\beta^\infty(\varphi)$ where $\beta > 0$ is an arbitrary step-size. An important property of $V_\beta^\infty(\varphi)$ is that the shift-invariant structure is invariant under the tensor product operation.

\begin{proposition}\label{prop:invariance_tensor_product}
Let $\varphi=\varphi^\sigma$ be a Gaussian window function with variance $\sigma^2$ and let $V_\beta^\infty(\varphi)$ be the corresponding shift-invariant space with generator $\varphi$ and step-size $\beta >0$. If $f \in V_\beta^\infty(\varphi)$ then $f_\omega \in V_{\frac{\beta}{2}}^\infty(\varphi_\omega)$ for every $\omega \in \R$.
\end{proposition}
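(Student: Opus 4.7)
My plan is to expand $f$ in its defining series, expand the tensor product as a double sum, and use the Gaussian multiplication identity to re-collect it as a single series of half-step translates of $\varphi_\omega$; the main issue will be showing that the resulting coefficient sequence is in $\ell^\infty$.

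Writing $f = \sum_n c_n T_{\beta n} \varphi$ with $c \in \ell^\infty(\Z)$, I would first use that $\varphi$ is real to compute
\begin{equation*}
f_\omega(t) = f(t-\omega)\overline{f(t)} = \sum_{n,m} c_n \overline{c_m}\, \varphi(t-\omega-\beta n)\,\varphi(t-\beta m).
\end{equation*}
The key algebraic step is the Gaussian product identity: for $a,b \in \R$,
\begin{equation*}
\varphi(t-a)\varphi(t-b) = e^{-(a-b)^2/(4\sigma^2)}\, \varphi^{\sigma/\sqrt 2}\bigl(t - \tfrac{a+b}{2}\bigr),
\end{equation*}
combined with the observation $\varphi_\omega(t) = e^{-\omega^2/(4\sigma^2)} \varphi^{\sigma/\sqrt 2}(t - \omega/2)$, which lets me rewrite every mixed product $\varphi(t-\omega-\beta n)\varphi(t-\beta m)$ as a scalar multiple of $\varphi_\omega\bigl(t - \tfrac{\beta(n+m)}{2}\bigr)$. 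The shifts thus land exactly on the lattice $\tfrac{\beta}{2}\Z$, which is the mechanism behind the halving of the step-size.

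Setting $k=n+m$ and $l=n-m$ and collecting terms gives
\begin{equation*}
f_\omega(t) = \sum_{k \in \Z} d_k\, T_{\frac{\beta}{2}k}\varphi_\omega(t),
\end{equation*}
with
\begin{equation*}
d_k = \sum_{\substack{l \in \Z\\ l \equiv k\,(2)}} c_{\frac{k+l}{2}}\, \overline{c_{\frac{k-l}{2}}}\, \exp\!\Bigl(-\tfrac{\beta^2 l^2 + 2\omega\beta l}{4\sigma^2}\Bigr).
\end{equation*}
Here I must be careful that the indices are integers, which is precisely why parity-matching between $k$ and $l$ appears.

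The main obstacle, and really the only nontrivial point, is proving $(d_k)_k \in \ell^\infty(\Z)$ from merely $c \in \ell^\infty(\Z)$. For this I would complete the square,
\begin{equation*}
\beta^2 l^2 + 2\omega\beta l = \beta^2\bigl(l + \tfrac{\omega}{\beta}\bigr)^2 - \omega^2,
\end{equation*}
to obtain the uniform bound
\begin{equation*}
|d_k| \leq \|c\|_\infty^2\, e^{\omega^2/(4\sigma^2)} \sum_{l \in \Z} e^{-\beta^2(l+\omega/\beta)^2/(4\sigma^2)},
\end{equation*}
where the series is finite and independent of $k$. This shows $f_\omega \in V^\infty_{\beta/2}(\varphi_\omega)$ as required. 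The rearrangement of the double sum is justified by absolute convergence (pointwise in $t$) coming from the Gaussian decay of all involved factors, so no delicate interchange is needed.
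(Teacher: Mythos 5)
Your proposal is correct and follows essentially the same route as the paper's proof: expand $f_\omega$ as a double sum, apply the Gaussian product identity together with $\varphi_\omega(t)=e^{-\omega^2/(4\sigma^2)}\varphi^{\sigma/\sqrt 2}(t-\omega/2)$ to land on the lattice $\tfrac{\beta}{2}\Z$, regroup along $n+m$, and bound the resulting coefficients uniformly by $\|c\|_\infty^2$ times a convergent Gaussian sum. Your explicit parity bookkeeping and completion of the square are just a more detailed version of the paper's bound $|d_\ell|\leq \|c\|_\infty^2\sum_n A(n,\ell-n)$.
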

\begin{proof}
Let $c = (c_n)_n \in \ell^\infty(\Z)$ be the defining sequence of $f$, i.e. $f = \sum_n c_n T_{\beta n}\varphi \in V_\beta^\infty(\varphi)$. The tensor product of $f$ is given by
$$
f_\omega = (T_\omega f)\overline{f} = \sum_n \sum_k c_n \overline{c_k} T_{\beta n+\omega} \varphi T_{\beta k} \varphi.
$$
Using the product formula
$$
\varphi(t-a)\varphi(t-b) = e^{-\frac{(a-b)^2}{4\sigma^2}}\varphi^{\frac{\sigma}{\sqrt{2}}}(t-\tfrac{a+b}{2}), \ \ a,b \in \R, \ \ \sigma>0
$$
we obtain for every $n,k \in \Z$ the identity
\begin{equation*}
\begin{split}
T_{\beta n+\omega} \varphi T_{\beta k} \varphi & = e^{-\frac{(\beta(n-k)+\omega)^2}{4 \sigma^2}} \varphi^{\tfrac{\sigma}{\sqrt{2}}}\left(t- \frac{\beta(n+k)}{2} - \frac{\omega}{2}\right) \\
& = e^{-\frac{(\beta(n-k)+\omega)^2}{4 \sigma^2}} e^{\frac{\omega^2}{4 \sigma^2}} \varphi\left(t-\frac{\beta(n+k)}{2}-\omega \right ) \varphi\left ( t-\frac{\beta(n+k)}{2} \right) \\
& = e^{-\frac{(\beta(n-k)+\omega)^2}{4 \sigma^2}} e^{\frac{\omega^2}{4 \sigma^2}} T_{\frac{\beta}{2}(n+k)} \varphi_\omega.
\end{split}
\end{equation*}
Setting $A(n,k) \coloneqq e^{-\frac{(\beta(n-k)+\omega)^2}{4 \sigma^2}} e^{\frac{\omega^2}{4 \sigma^2}}$ and defining coefficients $(d_\ell)_\ell \subset \C$ via
$$
d_\ell \coloneqq \sum_{n,k \in \Z, n+k=\ell} A(n,k) c_n \overline{c_k}
$$
shows that $f_\omega$ takes the form
$$
f_\omega = \sum_{\ell} d_\ell T_{\frac{\beta}{2}\ell} \varphi_\omega.
$$

The statement follows at once from the bound $|d_\ell| \leq \| c \|_\infty^2 \sum_n A(n,\ell-n)$ and the fact that the Gaussian sum $\sum_n A(n,\ell-n)$ is uniformly bounded in $\ell$.
\end{proof}

We aim to reconstruct $f$ from its tensor-product $f_\omega$ using the abstract reconstruction result given in Proposition \ref{prop:tensorproduct_properties} (4). Since $f_\omega$ is generated by $\varphi_\omega$ one could recover $f_\omega$ from the inner products $\langle f_\omega, T_{\frac{\beta}{2}n} \varphi_\omega \rangle, n \in \Z,$ using a biorthogonal expansion as discussed in Section \ref{subsection:gaussian_generators}. We already verified the Riesz basis property of $(T_{\frac{\beta}{2}n} \varphi_\omega)_n$ in Corollary \ref{cor:gaussian_sispace}. It remains open how to gain access to the inner products $\langle f_\omega, T_{\frac{\beta}{2}n} \varphi_\omega \rangle \in \C$. This information is encoded in the spectrogram of $f$.

\begin{proposition}\label{prop:fourier_identity}
For every $f \in L^\infty(\R)$ and every $g \in \lt$ one has
$$
\ft |\mathcal{V}_gf(x,\cdot)|^2(\omega) = \langle f_\omega, T_xg_\omega \rangle.
$$
\end{proposition}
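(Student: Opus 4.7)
The plan is to reduce the identity to a standard autocorrelation computation. Fix $x \in \R$ and define
$$
h(t) \coloneqq f(t) \overline{g(t-x)} = f(t) \overline{T_x g(t)}.
$$
Since $f \in L^\infty(\R)$ and $g \in L^2(\R)$, the function $h$ lies in $L^2(\R)$, and by the very definition of the short-time Fourier transform we have $\mathcal{V}_g f(x,\omega) = \hat h(\omega)$. Hence $|\mathcal{V}_gf(x,\cdot)|^2 = |\hat h|^2 \in L^1(\R)$ by Plancherel, so the Fourier transform on the left-hand side is well-defined pointwise.

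Next I would invoke the standard autocorrelation identity: if $\tilde h(s) \coloneqq \overline{h(-s)}$, then $\overline{\hat h(\omega)} = \widehat{\tilde h}(\omega)$ and consequently
$$
|\hat h(\omega)|^2 = \hat h(\omega)\, \widehat{\tilde h}(\omega) = \widehat{h * \tilde h}(\omega).
$$
Applying $\ft$ once more and using $\ft^2 \phi(\omega) = \phi(-\omega)$, which is valid since $h * \tilde h$ is continuous and bounded (a convolution of $L^2$ functions) and its Fourier transform $|\hat h|^2$ lies in $L^1$, yields
$$
\ft|\mathcal{V}_g f(x,\cdot)|^2(\omega) = (h * \tilde h)(-\omega) = \int_\R h(t)\,\overline{h(\omega + t)}\, dt.
$$

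The last step is bookkeeping: substituting $h(t) = f(t)\overline{g(t-x)}$ and performing the change of variables $s = t+\omega$ in the integral gives
$$
\ft|\mathcal{V}_g f(x,\cdot)|^2(\omega) = \int_\R f(s-\omega)\overline{f(s)}\, \overline{g(s-\omega-x)}\, g(s-x)\, ds.
$$
Recognizing the integrand as $f_\omega(s)\,\overline{T_x g_\omega(s)}$ using the definitions $f_\omega = (T_\omega f)\overline{f}$ and $T_x g_\omega(s) = g(s-x-\omega)\overline{g(s-x)}$ completes the proof.

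The only point requiring care is the justification of $\ft^2 \phi(\omega) = \phi(-\omega)$ and the use of the autocorrelation identity in the non-$L^1$ setting; both are handled by noting that $h \in L^2$ forces $h * \tilde h$ to be a bounded continuous function whose Fourier transform $|\hat h|^2$ is integrable, so Fourier inversion applies. I do not anticipate any genuine obstacle — this is essentially the Wiener--Khinchin identity specialized to the STFT.
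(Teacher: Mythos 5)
Your proof is correct and is essentially the same Wiener--Khinchin computation as the paper's: both express $\ft\,|\mathcal{V}_gf(x,\cdot)|^2$ as an autocorrelation of the windowed signal via the convolution theorem and Fourier inversion, then identify the resulting integral with $\langle f_\omega, T_x g_\omega\rangle$ after a change of variables. The only (cosmetic) difference is that the paper routes through the cross-ambiguity function with the symmetrized window $p_x(t)=f(t+\tfrac{x}{2})\overline{g(t-\tfrac{x}{2})}$, whereas you work directly with $h=f\,\overline{T_xg}$, which avoids the phase factor and the extra half-shift substitution.
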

\begin{proof}
Under the above assumptions we have $f\overline{T_xg} \in \lt$ and therefore $|\ft(f\overline{T_xg})|^2 \in L^1(\R)$. Consequently, the Fourier transform of $t \mapsto |\mathcal{V}_gf(x,t)|^2 = |\ft(f\overline{T_xg})(t)|^2$ is pointwise defined (as a continuous function).
Now let $p_x(t) \coloneqq f(t+x/2)\overline{g(t-x/2)}$. From the definition of the cross-ambiguity function $A(f,g)=e^{\pi i x \omega}\mathcal{V}_gf(x,\omega)$ it follows that
$$
A(f,g)(x,\omega)\overline{A(f,g)(x,\omega)} = \ft p_x(\omega) \overline{\ft p_x (\omega)} = \ift(\ft^2 p_x * \overline{p_x})(\omega).
$$
The convolution $\ft^2p_x * \overline{p_x}$ can be written as
$$
(\ft^2p_x * \overline{p_x})(\omega) = \int_\R T_\omega f(k) \, \overline{f(k)} \, T_x[\overline{T_\omega g(k)} \, g(k)] \, dk = \langle f_\omega, T_x g_\omega \rangle,
$$
where we used the change of variables $k \mapsto k - \frac{x}{2}$ in the first equality. It follows that
$$
|\mathcal{V}_gf(x,\omega)|^2 = A(f,g)(x,\omega)\overline{A(f,g)(x,\omega)} = \ift(t \mapsto \langle f_t, T_x g_t \rangle)(\omega)
$$
which yields the assertion.
\end{proof}

Denoting the dual-generator of $\varphi_\omega$ by $\widetilde{\varphi_\omega}$, the following result is an immediate consequence of Proposition \ref{prop:fourier_identity}.

\begin{corollary}\label{cor:tp_expansion}
Let $f \in V_\beta^\infty(\varphi)$. If
$$
f_\omega = \sum_{n \in \Z} c_n T_{\frac{\beta}{2}n}\widetilde{\varphi_\omega}, \ (c_n)_n \in \ell^\infty(\Z)
$$
is the biorthogonal expansion of $f_\omega$ w.r.t. the dual generator $\widetilde{\varphi_\omega}$ then
\begin{equation}\label{cn_coefficients}
    c_n = \int_\R |\mathcal{G}f(\tfrac{\beta}{2}n,t)|^2e^{-2\pi i \omega t} \, dt
\end{equation}
for every $n \in \Z$. In particular, every $f_\omega$ is determined uniquely by $|\mathcal{G}f(\tfrac{\beta}{2}\Z \times \R)|$ and every $f  \in V_\beta^\infty(\varphi)$ is determined up to a global phase by $|\mathcal{G}f(\tfrac{\beta}{2}\Z \times \R)|$, i.e. if $f,h \in V_\beta^\infty(\varphi)$ with $|\mathcal{G}f(\tfrac{\beta}{2}\Z \times \R)|=|\mathcal{G}h(\tfrac{\beta}{2}\Z \times \R)|$ then there exists a $\tau \in \T$ such that $f = \tau h$.
\end{corollary}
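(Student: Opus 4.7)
The plan is to combine the invariance of the Gaussian shift-invariant structure under the tensor product operation with the Fourier identity relating spectrogram samples to inner products against shifted tensor products. First, by Proposition \ref{prop:invariance_tensor_product}, $f_\omega$ belongs to $V_{\beta/2}^\infty(\varphi_\omega)$, and from the product formula used in the proof of that proposition we have $\varphi_\omega(t) = e^{-\omega^2/(4\sigma^2)} \, T_{\omega/2}\varphi^{\sigma/\sqrt 2}(t)$, i.e.\ $\varphi_\omega$ is (up to a positive constant and a fixed translation) a Gaussian of variance $\sigma^2/2$. Hence the theory of Section \ref{section2} applies verbatim to this generator: $(T_{\beta n/2}\varphi_\omega)_n$ is a Riesz basis for $V_{\beta/2}^2(\varphi_\omega)$ by Corollary \ref{cor:gaussian_sispace}, and Proposition \ref{prop:V_2_V_inf} delivers the biorthogonal expansion
$$
f_\omega = \sum_n \langle f_\omega, T_{\beta n/2}\varphi_\omega\rangle \, T_{\beta n/2}\widetilde{\varphi_\omega},
$$
converging uniformly on compact intervals. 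In particular, the coefficients appearing in the statement of the corollary are exactly $c_n = \langle f_\omega, T_{\beta n/2}\varphi_\omega\rangle$.

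Second, I would apply Proposition \ref{prop:fourier_identity} with window $g = \varphi$ and translation $x = \beta n/2$ to rewrite each coefficient as
$$
c_n \;=\; \langle f_\omega, T_{\beta n/2}\varphi_\omega\rangle \;=\; \ft\bigl|\mathcal{G}f(\tfrac{\beta}{2}n,\cdot)\bigr|^2(\omega) \;=\; \int_\R \bigl|\mathcal{G}f(\tfrac{\beta}{2}n,t)\bigr|^2 e^{-2\pi i \omega t}\,dt,
$$
which is precisely \eqref{cn_coefficients}. Since the $c_n$ are now expressed entirely in terms of the spectrogram samples $|\mathcal{G}f(\tfrac{\beta}{2}\Z\times\R)|$, the displayed biorthogonal expansion reconstructs $f_\omega$ uniquely from these samples for every $\omega \in \R$.

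Finally, for the uniqueness-up-to-global-phase statement, assume $f,h \in V_\beta^\infty(\varphi)$ with $|\mathcal{G}f(\tfrac{\beta}{2}\Z\times\R)| = |\mathcal{G}h(\tfrac{\beta}{2}\Z\times\R)|$. Then the coefficients produced by \eqref{cn_coefficients} agree for $f$ and $h$ at every $\omega \in \R$ and every $n \in \Z$, so the biorthogonal expansion forces $f_\omega = h_\omega$ for every $\omega$; equivalently, the tensor products $F$ and $H$ coincide on all of $\R^2$. Proposition \ref{prop:tensorproduct_properties}(3) then yields $\tau \in \T$ with $f = \tau h$. The only mildly delicate point in this plan is the transfer of Proposition \ref{prop:V_2_V_inf}, stated for the Gaussian $\varphi$, to the perturbed generator $\varphi_\omega$; but since $\varphi_\omega$ is a translated rescaled Gaussian, Corollary \ref{cor:gaussian_sispace} supplies the required Riesz-basis and dual-generator structure, so no genuine obstacle arises.
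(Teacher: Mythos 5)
Your proposal is correct and follows essentially the same route as the paper: Proposition \ref{prop:invariance_tensor_product} to place $f_\omega$ in $V_{\beta/2}^\infty(\varphi_\omega)$, Proposition \ref{prop:fourier_identity} (with $g=\varphi$, $x=\tfrac{\beta}{2}n$) to identify the biorthogonal coefficients with the Fourier transform of the spectrogram slices, and Proposition \ref{prop:tensorproduct_properties}(3) for the global-phase conclusion. You merely spell out the transfer of the Riesz-basis/dual-generator structure and of Proposition \ref{prop:V_2_V_inf} to the translated rescaled Gaussian $\varphi_\omega$, which the paper leaves implicit.
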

\begin{proof}
The expansion $f_\omega = \sum_{n \in \Z} c_n T_{\frac{\beta}{2}n}\widetilde{\varphi_\omega}$ with $(c_n)_n$ given as in \eqref{cn_coefficients} is a direct consequence of Proposition \ref{prop:invariance_tensor_product} and Proposition \ref{prop:fourier_identity} above. In particular, this implies that $f_\omega$ is determined uniquely by $|\mathcal{G}f(\tfrac{\beta}{2}\Z \times \R)|$. Since $\omega$ was arbitrary, Proposition \ref{prop:tensorproduct_properties} shows that $f$ is determined up to a global phase factor by $|\mathcal{G}f(\tfrac{\beta}{2}\Z \times \R)|$.
\end{proof}

Observe that if $f$ takes the form $f=\sum_n c_n T_{\beta n}\varphi$ with $(c_n)_n \subset \R$, i.e. $f$ is real-valued, then the Gabor phase retrieval problem can be interpreted as a reconstruction of $f$ from its modulus (up to a variance-change in the generator). For if the frequency variable in the Gabor transform is set to zero then
\begin{equation}\label{frequency_zero}
    |\mathcal{G}f(x,0)| = \left | \sum_n c_n (\varphi * T_{\beta n} \varphi)(x) \right | = \left | \sum_n \sigma \sqrt \pi c_n T_{\beta n} \varphi^{\sqrt{2}\sigma}(x) \right | =: |h(x)|.
\end{equation}
The map $h$ is an element of $V_\beta^\infty(\varphi^{\sqrt{2}\sigma})$ and a result due to Gröchenig shows that $h$ is determined by $|h(X)|$ whenever $X \subseteq \R$ has lower Beurling density $D^-(X)>2\beta$ \cite[Theorem 1]{groechenigPhase}. In particular, $f$ is determined by $|\mathcal{G}f(\frac{\beta}{2+\varepsilon}\Z \times \{ 0 \})|$. This result is sharp and does not hold for complex-valued maps \cite[Section 2]{groechenigPhase}. Corollary \ref{cor:tp_expansion} shows that in the complex case, $(c_n)_n \subset \C$, uniqueness can be guaranteed if we set $\varepsilon=0$ and extend the grid $\frac{\beta}{2}\Z \times \{ 0 \}$ to parallel lines $\frac{\beta}{2}\Z \times \R$ in the time-frequency plane. In order to transform the uniqueness part of Corollary \ref{cor:tp_expansion} into a uniqueness result from measurements lying on a lattice, one may suppose further properties on the step-size $\beta$ as in \cite[Theorem 3.6]{grohsliehr2020}. We conclude the present section with an explicit reconstruction formula from spectrogram measurements.

\begin{theorem}\label{thm:explicit_reconstruction}
Let $f \in V_\beta^\infty(\varphi)$ and let $p \in \R$ such that $f(p) \neq 0$. Then there exists a unimodular constant $\tau \in \T$ such that
$$
f(p+\omega) = \tau |f_0(p)|^{-\frac{1}{2}} \sum_{n \in \Z} \left ( \int_\R |\mathcal{G}f(\tfrac{\beta}{2}n,t)|^2e^{2\pi i \omega t} \, dt \right ) T_{\frac{\beta}{2}n} \widetilde{\varphi_\omega}(p+\omega)
$$
for every $\omega \in \R$ and
$$
f_0(p) = \sum_{n \in \Z} \left ( \int_\R |\mathcal{G}f(\tfrac{\beta}{2}n,t)|^2 \, dt \right ) T_{\frac{\beta}{2}n} \widetilde{\varphi_0}(p)
$$
\end{theorem}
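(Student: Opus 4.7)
The plan is to combine the abstract reconstruction formula from Proposition \ref{prop:tensorproduct_properties}(1) with the biorthogonal expansion of the tensor product $f_\omega$ made available by Corollary \ref{cor:tp_expansion}. By Proposition \ref{prop:tensorproduct_properties}(1), for every $p$ with $f(p)\neq 0$ there is a unique $\tau \in \T$ (namely $\tau = f(p)/|f(p)|$) such that
$$
f(p+\omega) = \tau\, F(p,0)^{-\tfrac{1}{2}}\,\overline{F(p+\omega,\omega)} = \tau\,|f_0(p)|^{-\tfrac{1}{2}}\,\overline{f_\omega(p+\omega)},
$$
since $F(p,0)=f_0(p)=|f(p)|^2\ge 0$. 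Everything then reduces to expressing $\overline{f_\omega(p+\omega)}$ in terms of spectrogram samples, which is exactly the content of Corollary \ref{cor:tp_expansion}: with $c_n=\int_\R|\mathcal Gf(\tfrac{\beta}{2}n,t)|^2 e^{-2\pi i\omega t}\,dt$ we have the biorthogonal expansion $f_\omega=\sum_n c_n\,T_{\frac{\beta}{2}n}\widetilde{\varphi_\omega}$, which by Proposition \ref{prop:V_2_V_inf} converges uniformly on compact intervals, so pointwise evaluation at $p+\omega$ is legitimate.

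The only subtle point is the termwise conjugation of this expansion. I would verify that $\widetilde{\varphi_\omega}$ is real-valued, which allows me to conclude
$$
\overline{f_\omega(p+\omega)} = \sum_n \overline{c_n}\,T_{\frac{\beta}{2}n}\widetilde{\varphi_\omega}(p+\omega) = \sum_n\left(\int_\R |\mathcal Gf(\tfrac{\beta}{2}n,t)|^2 e^{2\pi i\omega t}\,dt\right)T_{\frac{\beta}{2}n}\widetilde{\varphi_\omega}(p+\omega).
$$
The reality of $\widetilde{\varphi_\omega}$ follows from the Gaussian product formula already used in the proof of Proposition \ref{prop:invariance_tensor_product}: $\varphi_\omega = e^{-\omega^2/(4\sigma^2)}\,T_{\omega/2}\varphi^{\sigma/\sqrt 2}$ is a real shifted Gaussian, hence $|\widehat{\varphi_\omega}|^2$ and consequently $\Psi_{\beta/2}$ are real and even, while $\widehat{\varphi_\omega}(\xi)$ satisfies the conjugate-symmetry $\overline{\widehat{\varphi_\omega}(\xi)}=\widehat{\varphi_\omega}(-\xi)$. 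Plugging these into the formula of Theorem \ref{thm:dual_generator} shows that the function $\theta$ defining $\widetilde{\varphi_\omega}$ has conjugate-symmetric values, so its inverse Fourier transform is real.

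Finally, the expression for the normalising constant $f_0(p)$ is just the previous expansion specialised to $\omega=0$: since $f_0=|f|^2\in V_{\beta/2}^\infty(\varphi_0)$ by Proposition \ref{prop:invariance_tensor_product}, applying Proposition \ref{prop:V_2_V_inf} and Corollary \ref{cor:tp_expansion} with $\omega=0$ (where the exponential factor disappears) and evaluating at $p$ yields
$$
f_0(p)=\sum_n\left(\int_\R |\mathcal Gf(\tfrac{\beta}{2}n,t)|^2\,dt\right)T_{\frac{\beta}{2}n}\widetilde{\varphi_0}(p).
$$
The main obstacle in this proof is really the small bookkeeping step of establishing that $\widetilde{\varphi_\omega}$ is real-valued; once this is in hand, the theorem is essentially a direct assembly of Propositions \ref{prop:tensorproduct_properties}(1), \ref{prop:invariance_tensor_product}, \ref{prop:V_2_V_inf} and Corollary \ref{cor:tp_expansion}.
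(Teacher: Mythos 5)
Your proposal is correct and follows essentially the same route as the paper's own (very terse) proof: combine Proposition \ref{prop:tensorproduct_properties}(1) with the biorthogonal expansion of Corollary \ref{cor:tp_expansion} and use the real-valuedness of $\widetilde{\varphi_\omega}$ to conjugate the expansion termwise, the $f_0(p)$ formula being the case $\omega=0$. The only difference is that you actually verify the reality of $\widetilde{\varphi_\omega}$ via conjugate symmetry of $\theta$ in Theorem \ref{thm:dual_generator} (one could equally read it off the explicit formula $\widetilde{\varphi_\omega}=\sqrt{2}e^{-\omega^2/(4\sigma^2)}T_{\omega/2}\ift\Lambda$ with $\Lambda$ real and even), whereas the paper simply asserts this fact.
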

\begin{proof}
Combining Corollary \ref{cor:tp_expansion} with Proposition \ref{prop:tensorproduct_properties} and fact that the dual generator $\widetilde{\varphi_\omega}$ is real-valued yields the statement.
\end{proof}

Theorem \ref{thm:explicit_reconstruction} implies that in a Gaussian shift-invariant setting it suffices to know the spectrogram on parallel vertical lines in order to reconstruct $f$ from its tensor product. In our further analysis it will be useful to work with an explicit formula for the dual generator of the tensor product $\varphi_\omega$. Such a formula can be readily established. The product formula
$$
\varphi(t-a)\varphi(t-b) = e^{-\frac{(a-b)^2}{4\sigma^2}}\varphi^{\frac{\sigma}{\sqrt{2}}}(t-\tfrac{a+b}{2}), \ \ a,b \in \R, \ \ \sigma>0
$$
implies that $\varphi_\omega(t) = e^{-\frac{\omega^2}{4\sigma^2}} T_{\frac{\omega}{2}}\varphi^{\frac{\sigma}{\sqrt 2}}(t)$. Using Corollary \ref{cor:gaussian_sispace} and substituting $\sigma$ with $\frac{\sigma}{\sqrt{2}}$ and $\beta$ with $\frac{\beta}{2}$ gives
\begin{equation}\label{eq:Lambda}
\widetilde{\varphi_\omega} = \sqrt{2}e^{-\frac{\omega^2}{4\sigma^2}}T_{\frac{\omega}{2}} \ift \Lambda, \ \ \Lambda(t) = \frac{e^{-\pi^2 \sigma^2 t^2}}{\vartheta_3(\frac{\beta}{2}\pi t, \varphi(\frac{\beta}{2}))}.
\end{equation}

\subsection{Stability}\label{sec:stab}

This section is devoted to the stability analysis of the biorthogonal expansion of the tensor product $f_\omega$ as well as the explicit inversion formula presented in Theorem \ref{thm:explicit_reconstruction}.
Let the mixed norm $\| \cdot \|_{\alpha,p}$ be defined as in equation \eqref{def:mixed_norm}. Using this norm, stability estimates for tensor products of functions in $V_\beta^1(\varphi)$ can be derived.

\begin{corollary}\label{cor:tensor_stability}
Let $f,h \in V_\beta^1(\varphi), \omega \in \R$, and suppose that $A(\sigma,\beta,\omega)$ is the lower bound of the Riesz basis $(T_{\frac{\beta}{2}n}\varphi_\omega)_n$ as given in Theorem \ref{cor:gaussian_sispace}. Then
\begin{equation}\label{ineq:stability_tensor_product}
    \| f_\omega - h_\omega \|_{\lt}^2 \leq \frac{1}{A(\sigma,\beta,\omega)} \| |\mathcal{G}f|^2-|\mathcal{G}h|^2 \|_{\frac{\beta}{2},2}^2 
\end{equation}
where $A(\sigma,\beta,\omega) = \frac{\sigma}{\sqrt 2} \varphi(\omega) \vartheta_3(\frac{\pi}{2},\varphi(\frac{\beta}{2}))$
\end{corollary}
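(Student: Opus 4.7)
The plan is to apply the lower frame bound of the Riesz basis $(T_{\frac{\beta}{2}n}\varphi_\omega)_{n\in\Z}$ to the difference $f_\omega - h_\omega$ and then rewrite the resulting inner products via Proposition \ref{prop:fourier_identity}. Since every Riesz basis with bounds $A,B$ is a frame with the same bounds for its closed linear span, one obtains
$$A(\sigma,\beta,\omega)\,\|f_\omega - h_\omega\|_{L^2}^2 \leq \sum_{n \in \Z} \bigl|\langle f_\omega - h_\omega,\, T_{\frac{\beta}{2}n}\varphi_\omega\rangle\bigr|^2,$$
provided the difference actually lies in $V_{\frac{\beta}{2}}^2(\varphi_\omega)$. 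Using Proposition \ref{prop:fourier_identity} with $g=\varphi$, each inner product equals $\mathcal{F}\bigl[|\mathcal{G}f(\tfrac{\beta}{2}n,\cdot)|^2 - |\mathcal{G}h(\tfrac{\beta}{2}n,\cdot)|^2\bigr](\omega)$, and the standard $\widehat{L^1}\hookrightarrow L^\infty$ estimate bounds each term in modulus by $\||\mathcal{G}f(\tfrac{\beta}{2}n,\cdot)|^2 - |\mathcal{G}h(\tfrac{\beta}{2}n,\cdot)|^2\|_{L^1(\R)}$. Squaring and summing over $n$ produces precisely $\||\mathcal{G}f|^2 - |\mathcal{G}h|^2\|_{\frac{\beta}{2},2}^2$ by the definition \eqref{def:mixed_norm}.

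To identify the explicit constant $A(\sigma,\beta,\omega)$, I plan to use the product identity $\varphi_\omega = e^{-\omega^2/(4\sigma^2)}T_{\omega/2}\varphi^{\sigma/\sqrt{2}}$ derived just before the corollary. The translation $T_{\omega/2}$ preserves Riesz bounds, while the scalar prefactor squared contributes a factor $\varphi(\omega) = e^{-\omega^2/(2\sigma^2)}$ to each bound. The Riesz bounds of $(T_{\frac{\beta}{2}n}\varphi^{\sigma/\sqrt{2}})_n$ then follow from Corollary \ref{cor:gaussian_sispace} applied with $\sigma \to \sigma/\sqrt{2}$ and $\beta \to \beta/2$: the infimum over $[0,1]$ of the resulting $1$-periodization is attained at $t=1/2$, yielding the expression in terms of $\vartheta_3(\pi/2, \varphi(\beta/2))$ claimed in the statement.

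The main technical point that requires care is verifying that $f_\omega - h_\omega$ actually belongs to the Hilbert space $V_{\frac{\beta}{2}}^2(\varphi_\omega)$ so that the frame inequality is legitimate. Proposition \ref{prop:invariance_tensor_product} is phrased for $V_\beta^\infty$, but its proof expresses $f_\omega$ as $\sum_\ell d_\ell T_{\frac{\beta}{2}\ell}\varphi_\omega$ with coefficients $d_\ell = \sum_n A(n,\ell-n)\, c_n\overline{c_{\ell-n}}$ for a uniformly bounded Gaussian kernel $A$. Since $f \in V_\beta^1(\varphi)$ has defining sequence $c \in \ell^1(\Z)$, the convolution $|c|*|c|$ lies in $\ell^1 \subset \ell^2$, so $(d_\ell) \in \ell^2$ and thus $f_\omega \in V_{\frac{\beta}{2}}^2(\varphi_\omega)$; the same argument applies to $h_\omega$. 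Once this inclusion is in place, the remainder is a direct combination of the Riesz-basis theory from Section \ref{section2} with Proposition \ref{prop:fourier_identity}.
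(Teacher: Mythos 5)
Your proposal is correct and follows essentially the same route as the paper's proof: the lower Riesz bound applied to $f_\omega-h_\omega$, Proposition \ref{prop:fourier_identity} together with the trivial $L^1\to L^\infty$ Fourier bound, and identification of $A(\sigma,\beta,\omega)$ via Corollary \ref{cor:gaussian_sispace} and the fact that $\vartheta_3(\cdot,c)$ attains its minimum at $\tfrac{\pi}{2}$. Your explicit $\ell^1$-convolution argument for $f_\omega,h_\omega\in V^2_{\beta/2}(\varphi_\omega)$ and your rescaling route to the constant are just more detailed renderings of the same steps the paper carries out (the paper invokes H\"older and computes the $\tfrac{\beta}{2}$-periodization directly, citing Janssen for the location of the theta minimum).
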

\begin{proof}
By Hölder's inequality we have $f_\omega, h_\omega \in V_{\frac{\beta}{2}}^2(\varphi_\omega)$. The fact that $(T_{\frac{\beta}{2}n}\varphi_\omega)_n$ is a Riesz basis for $V_{\frac{\beta}{2}}^2(\varphi_\omega)$ implies that
$$
A(\sigma,\beta,\omega) \| f_\omega - h_\omega \|_{\lt}^2 \leq \sum_{n} | \langle f_\omega -h_\omega , T_{\frac{\beta}{2}n}\varphi_\omega \rangle |^2.
$$
In addition, Proposition \ref{prop:fourier_identity} shows that
\begin{equation*}
    \begin{split}
        | \langle f_\omega -h_\omega, T_{\frac{\beta}{2}n}\varphi_\omega \rangle | & = | \ft (|\mathcal{G}f(\tfrac{\beta}{2}n,\cdot)|^2 - |\mathcal{G}h(\tfrac{\beta}{2}n,\cdot)|^2)(\omega)| \\
        & \leq \| |\mathcal{G}f(\tfrac{\beta}{2}n,\cdot)|^2 - |\mathcal{G}h(\tfrac{\beta}{2}n,\cdot)|^2 \|_{L^1(\R)}.
    \end{split}
\end{equation*}
Combining the previous two inequalities yields the first part of the statement. To obtain the second assertion, we observe that the 1-periodization of $\varphi_\omega$ with step-size $\frac{\beta}{2}$ is given by
$
\Phi_{\beta/2}(t) = \frac{\sigma\beta}{2\sqrt 2} \varphi(\omega) \vartheta_3(\pi t , \varphi(\frac{\beta}{2})).
$
According to Theorem \ref{thm:riesz_basis_characterization}, the value $A(\sigma,\beta,\omega)$ is then given as the minimum of the map
$$t \mapsto \frac{2\Phi_{\beta/2}(t)}{\beta}$$ on $[0,1]$.
It was shown by Janssen \cite[p. 178]{JANSSEN1996165} that for every $c \in (0,1)$ the theta function $\vartheta_3(\cdot,c)$ attains its minima at $t \in \frac{\pi}{2}\Z$, thereby proving the statement.
\end{proof}

By virtue of the RKHS structure it is evident that an analogue version of Corollary \ref{cor:tensor_stability} can be derived where the $L^2$-norm on the left-hand side is replaced by the $L^\infty$-norm. For, if $f,h \in V_\beta^1(\varphi)$ then $f_\omega,h_\omega \in V^2_{\frac{\beta}{2}}(\varphi_\omega)$ and $V^2_{\frac{\beta}{2}}(\varphi_\omega)$ is a RKHS. Therefore, the point evaluation functionals
$$
f_\omega \mapsto f_\omega(x), \ \ x \in \R,
$$
are continuous linear functionals on $V^2_{\frac{\beta}{2}}(\varphi_\omega)$ and there exists $K_x \in V^2_{\frac{\beta}{2}}(\varphi_\omega)$ such that $f_\omega(x) = \langle f_\omega,K_x \rangle$ which implies that
$$
|f_\omega(x)-h_\omega(x)| \leq \| K_x \|_{L^2(\R)} \| f_\omega - h_\omega \|_{L^2(\R)}.
$$
Combining the fact that the generator is a Gaussian with an explicit form of $K_x$ (see, for instance, \cite[Theorem 4.1]{GroechenigSurvey}), we observe that $x \mapsto \| K_x \|_{L^2(\R)}$ is bounded on $\R$ which in turn yields the bound
$$
\| f_\omega - h_\omega \|_{L^\infty(\R)} \leq c \| f_\omega - h_\omega \|_{L^2(\R)}
$$
for some constant $c>0$.

Moreover, for a fixed $\omega \in \R$, Corollary \ref{cor:tensor_stability} shows that the tensor product $f_\omega$ of functions $f \in V_\beta^1(\varphi)$ is stably determined by spectrogram measurements and the stability constant $\tfrac{1}{A(\sigma,\beta,\omega)}$ in inequality \eqref{ineq:stability_tensor_product} is independent of $f$ and $h$. Consequently, the reconstruction of $f_\omega$ from $|\mathcal{G}f(\tfrac{\beta}{2}\Z \times \R)|$ is globally stable within the signal class $V_\beta^1(\varphi)$. Since
$$
A(\sigma,\beta,\omega) = \frac{\sigma}{\sqrt 2} \varphi(\omega) \vartheta_3(\tfrac{\pi}{2},\varphi(\tfrac{\beta}{2}))
$$
it follows that the stability constant for reconstructing the modulus $f_0=|f|^2$ is the best among the constants $\{ A(\sigma,\beta,\omega) : \omega \in \R \}$ and the stability deteriorates rapidly in $\omega$.
Classical examples for instabilities are of the form
\begin{equation}\label{example:instability}
    \begin{split}
        f_{n,+} &= T_{\beta n} \varphi + T_{-\beta n} \varphi, \\
        f_{n,-} &= T_{\beta n} \varphi - T_{-\beta n} \varphi,
    \end{split}
\end{equation}
and they show that the stability constant grows like $e^{n^2}$ \cite{ALAIFARI2021401}. Thus, one cannot expect to stably reconstruct arbitrary non-positive functions without imposing further assumptions.
We surpass instabilities arising from signals of the form \eqref{example:instability} in the following way:
A closer look on the map $f \coloneqq f_{n,+}$ shows that on the interval $I = [-\beta n , \beta n]$ it holds that
$$
|f_{n,+}(\beta n)| = |f_{n,+}(-\beta n)| \approx 1
$$
whereas $|f| \approx 0$ on a large part of the interior of $I$. Therefore, $|f|$ (and also $|\mathcal{G}f|$) is concentrated on two almost disjoint components which is a classical indicator for instabilities \cite{daub,GrohsRathmair,GrohsRathmair2}. Such situations can be excluded by the requirement that $f$ satisfies condition \textbf{(P)}: if $p_1 < \cdots < p_J \in \R, I \subseteq [p_1,p_J]$ and $\gamma > 0$ with $|f(p_j)| \geq \gamma$ for all $j$ then the distance of the areas where $|f|$ is concentrated is determined by the value $r = \max_{1 \leq j \leq J-1} p_{j+1}-p_j$. In this setting we expect that the quotient distance $\min_{\tau \in \T} \| f - \tau g \|$ can be controlled by the norm of the difference of the corresponding spectrograms up to factor which depends on $e^{r^2}$ and $\frac{1}{\gamma}$. This is indeed the case as we shall elaborate in the following.
First, we define a constant $C(\sigma,\beta)$ depending only on the variance of the generator as well as the step-size of the underlying Gaussian shift-invariant space by
\begin{equation}\label{def:stability_constant_C}
    C(\sigma,\beta) \coloneqq \sup_{p \in \R} \sum_{n \in \Z} |T_{\frac{\beta}{2}n}\ift \Lambda(p)|
\end{equation}
where $\Lambda$ is the map which characterizes the dual generator, see equation \eqref{eq:Lambda}. We start with local stability estimates around points $p_j \in \R, 1 \leq j \leq J, J \in \N$ where $f(p_j) \neq 0$ for all $j$.

\begin{lemma}\label{lma:local_stability}
Let $f,g \in V_\beta^\infty(\varphi), r>0, J \in \N$ and $p_j \in \R, 1 \leq j \leq J,$ such that $f(p_j),g(p_j) \neq 0$ for every $j$. Define
$$
\tau_j \coloneqq \frac{\overline{g(p_j)}|f(p_j)|}{|g(p_j)|\overline{f(p_j)}}, \ I_j \coloneqq [p_j-r,p_j+r],
$$
$$
c_j \coloneqq \frac{1}{|g(p_j)|} \left ( e^{\frac{r^2}{4\sigma^2}} + \frac{\| f \|_{L^\infty(I_j)}}{|f(p_j)|+|g(p_j)|} \right ).
$$
Then $\| |f|^2-|g|^2 \|_{L^\infty(\R)} \leq \sqrt{2} \| |\mathcal{G}f|^2-|\mathcal{G}g|^2 \|_{\frac{\beta}{2},\infty} C(\sigma,\beta)$ and for every $j \in \{ 1, \dots, J \}$ it holds that $$\min_{\tau \in \T} \| f-\tau g \|_{L^\infty(I_j)} \leq \| f-\tau_j g \|_{L^\infty(I_j)} \leq c_j \sqrt{2} \| |\mathcal{G}f|^2-|\mathcal{G}g|^2 \|_{\frac{\beta}{2},\infty} C(\sigma,\beta),$$
with $C(\sigma,\beta)$ being defined as in equation \eqref{def:stability_constant_C}.
\end{lemma}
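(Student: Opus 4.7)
The plan is to derive both estimates from biorthogonal expansions of tensor products in Gaussian shift-invariant spaces, coupled with a short algebraic manipulation of $f - \tau_j g$. Set $E := \||\mathcal{G}f|^2 - |\mathcal{G}g|^2\|_{\tfrac{\beta}{2},\infty}$ for brevity.

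For the first (global) bound, observe that $|f|^2 = f_0 \in V_{\tfrac{\beta}{2}}^\infty(\varphi_0)$ by Proposition \ref{prop:invariance_tensor_product}, and likewise for $g$. The biorthogonal expansion from Proposition \ref{prop:V_2_V_inf} applied to $f_0 - g_0$ reads
$$
f_0(t) - g_0(t) = \sum_n \langle f_0 - g_0, T_{\tfrac{\beta}{2}n}\varphi_0\rangle\, T_{\tfrac{\beta}{2}n}\widetilde{\varphi_0}(t),
$$
and Proposition \ref{prop:fourier_identity} at $\omega = 0$ identifies each coefficient with the value $\int_\R (|\mathcal{G}f(\tfrac{\beta}{2}n,t)|^2 - |\mathcal{G}g(\tfrac{\beta}{2}n,t)|^2)\,dt$, bounded in absolute value by $E$. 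The triangle inequality, the identity $\widetilde{\varphi_0} = \sqrt{2}\ift\Lambda$ from \eqref{eq:Lambda}, and the definition \eqref{def:stability_constant_C} of $C(\sigma,\beta)$ then give $\||f|^2 - |g|^2\|_{L^\infty(\R)} \leq \sqrt{2}\,E\,C(\sigma,\beta)$ after a supremum in $t$.

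For the localized estimate, Proposition \ref{prop:tensorproduct_properties}(1) applied to both $f$ and $g$ at $p_j$ yields
$$
f(p_j+\omega) = \frac{\alpha_j}{|f(p_j)|}\overline{f_\omega(p_j+\omega)},\qquad g(p_j+\omega) = \frac{\mu_j}{|g(p_j)|}\overline{g_\omega(p_j+\omega)},
$$
with $\alpha_j := f(p_j)/|f(p_j)|$ and $\mu_j := g(p_j)/|g(p_j)|$. A direct calculation shows $\tau_j = \alpha_j/\mu_j$, hence
$$
f(p_j+\omega) - \tau_j g(p_j+\omega) = \alpha_j\left[\frac{\overline{f_\omega(p_j+\omega)}}{|f(p_j)|} - \frac{\overline{g_\omega(p_j+\omega)}}{|g(p_j)|}\right].
$$
Adding and subtracting $\overline{f_\omega(p_j+\omega)}/|g(p_j)|$ inside the bracket, taking absolute values, and using $|f_\omega(p_j+\omega)| = |f(p_j)|\,|f(p_j+\omega)|$ produces
$$
|f(p_j+\omega) - \tau_j g(p_j+\omega)| \leq \frac{1}{|g(p_j)|}\bigl[|f_\omega(p_j+\omega) - g_\omega(p_j+\omega)| + |f(p_j+\omega)|\,\bigl||f(p_j)| - |g(p_j)|\bigr|\bigr].
$$

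To conclude, each bracketed term must be estimated. Repeating the biorthogonal expansion for $f_\omega - g_\omega \in V_{\tfrac{\beta}{2}}^\infty(\varphi_\omega)$ with the $\omega$-dependent dual generator from \eqref{eq:Lambda}, and using that $\sup_s\sum_n|T_{\tfrac{\beta}{2}n}\ift\Lambda(s)| = C(\sigma,\beta)$ is translation-invariant, yields a bound of the shape $\sqrt{2}\cdot(\text{exponential factor in }\omega)\cdot C(\sigma,\beta)\cdot E$ for $\|f_\omega - g_\omega\|_{L^\infty(\R)}$, at most $\sqrt{2}\,e^{r^2/(4\sigma^2)}\,C(\sigma,\beta)\,E$ for $|\omega| \leq r$. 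For the modulus difference, the factorization $\bigl||f(p_j)|-|g(p_j)|\bigr| = \bigl||f(p_j)|^2-|g(p_j)|^2\bigr|/(|f(p_j)|+|g(p_j)|)$ combined with the first part gives $\bigl||f(p_j)|-|g(p_j)|\bigr|\leq \sqrt{2}\,E\,C(\sigma,\beta)/(|f(p_j)|+|g(p_j)|)$. Bounding $|f(p_j+\omega)|\leq \|f\|_{L^\infty(I_j)}$ for $\omega\in[-r,r]$ and taking a supremum over such $\omega$ reassembles the right-hand side into $c_j\sqrt{2}\,E\,C(\sigma,\beta)$ as claimed; the inequality $\min_{\tau\in\T}\|f-\tau g\|_{L^\infty(I_j)}\leq \|f-\tau_j g\|_{L^\infty(I_j)}$ is immediate since $\tau_j\in\T$. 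The main technical point is tracking the exponential factor carried by the $\omega$-shifted dual generator of $\varphi_\omega$; the rest is elementary algebra.
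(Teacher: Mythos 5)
Your argument is correct and follows essentially the same route as the paper's own proof: the global bound comes from the biorthogonal expansion of $f_0-g_0$ with coefficients controlled by $\| |\mathcal{G}f|^2-|\mathcal{G}g|^2 \|_{\frac{\beta}{2},\infty}$, and the local bound from the reconstruction identity $f(p_j+\omega)=\overline{f_\omega(p_j+\omega)}/\overline{f(p_j)}$, the same two-term splitting in $1/|g(p_j)|$, and the $e^{\frac{r^2}{4\sigma^2}}$ factor carried by the dual generator of $\varphi_\omega$. Nothing essential is missing.
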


\begin{proof}
In view of Corollary \ref{cor:tp_expansion}, both maps $|f|^2$ and $|g|^2$ can be expanded in terms of the dual generator of $\varphi_0 = \varphi^2$. Hence, we have for every $t \in \R$ the estimate
\begin{equation*}
    \begin{split}
        ||f(t)|^2-|g(t)|^2| & = \left | \sum_n \langle f_0,T_{\frac{\beta}{2}n}\varphi_0 \rangle T_{\frac{\beta}{2}n} \widetilde{\varphi_0}(t)  - \sum_n \langle g_0,T_{\frac{\beta}{2}n}\varphi_0 \rangle T_{\frac{\beta}{2}n} \widetilde{\varphi_0}(t)    \right | \\
        & \leq \sum_n |\langle f_0,T_{\frac{\beta}{2}n}\varphi_0 \rangle-\langle g_0,T_{\frac{\beta}{2}n}\varphi_0 \rangle||T_{\frac{\beta}{2}n} \widetilde{\varphi_0}(t)| \\
        & \leq
 \sqrt{2} \| |\mathcal{G}f|^2-|\mathcal{G}g|^2 \|_{\frac{\beta}{2},\infty}  \sum_n |T_{\frac{\beta}{2}n}\ift \Lambda(p)| \\
 & \leq \sqrt{2} \| |\mathcal{G}f|^2-|\mathcal{G}g|^2 \|_{\frac{\beta}{2},\infty} C(\sigma,\beta) =: B.
    \end{split}
\end{equation*}
Let $\omega \in \R$ with $|\omega|\leq r$. Since $f(p_j),g(p_j) \neq 0$ the explicit reconstruction formula from tensor products shows that 
$$
f(p_j+\omega) = \frac{1}{\overline{f(p_j)}} \overline{f_\omega(p_j+\omega)}, \ g(p_j+\omega) = \frac{1}{\overline{g(p_j)}} \overline{g_\omega(p_j+\omega)}.
$$
The choice of $\tau_j$ implies that
\begin{equation*}
    \begin{split}
        & |f(p_j+\omega)-\tau_j g(p_j+\omega)| = \left | \frac{1}{|f(p_j)|} f_\omega(p_j+\omega) - \frac{1}{|g(p_j)|} g_\omega(p_j+\omega) \right | \\
        & \leq \frac{1}{|g(p_j)|} |f_\omega(p_j+\omega) - g_\omega(p_j+\omega)| +  \frac{||g(p_j)|-|f(p_j)||}{|f(p_j)g(p_j)|} |f_\omega(p_j+\omega)|  \\
        & = \frac{1}{|g(p_j)|}  ( |f_\omega(p_j+\omega) - g_\omega(p_j+\omega)| + ||f(p_j)|-|g(p_j)|||f(p_j+\omega)| ) \\
        & \leq \frac{1}{|g(p_j)|}  \left ( |f_\omega(p_j+\omega) - g_\omega(p_j+\omega)| +  \frac{ B}{|f(p_j)|+|g(p_j)|} \| f \|_{L^\infty(I_j)}\right ).
    \end{split}
\end{equation*}
Setting $A \coloneqq |f_\omega(p_j+\omega) - g_\omega(p_j+\omega)|$ and employing the reconstruction formula from Theorem \ref{thm:explicit_reconstruction} and the explicit form of the dual generator gives
\begin{equation*}
    \begin{split}
        A & = \left | \sum_n \left (  \int_\R (|\mathcal{G}f(\tfrac{\beta}{2}n,t)|^2-|\mathcal{G}g(\tfrac{\beta}{2}n,t)|^2)e^{-2\pi i \omega t} \, dt \right)T_{\frac{\beta}{2}n}\widetilde{\varphi_\omega}(p+\omega) \right | \\
        & \leq \sum_n \| |\mathcal{G}f(\tfrac{\beta}{2}n,\cdot)|^2 - |\mathcal{G}g(\tfrac{\beta}{2}n,\cdot)|^2 \|_{L^1(\R)} |T_{\frac{\beta}{2}n}\widetilde{\varphi_\omega}(p+\omega)| \\
        & \leq \sqrt{2}e^{\frac{\omega^2}{4\sigma^2}} \| |\mathcal{G}f|^2-|\mathcal{G}g|^2 \|_{\frac{\beta}{2},\infty} \sum_n |T_{\frac{\beta}{2}n+\frac{\omega}{2}}\ift \Lambda(p+\omega)| \\
        & \leq e^{\frac{r^2}{4\sigma^2}}B.
    \end{split}
\end{equation*}
The assertion follows from that.
\end{proof}

For $j \in \N$, let $\tau_j,I_j$ and $c_j$ be defined analogue to Lemma \ref{lma:local_stability}. The following elementary Lemma will be useful in the proof of the main stability result of the present section.

\begin{lemma}\label{lma:phase_bound}
Let $f,g \in V_\beta^\infty(\varphi), r>0$ and $p_j,p_m \in \R$ such that $$f(p_j),g(p_j),f(p_m),g(p_m) \neq 0.$$ Suppose that $c_j, \tau_j, \tau_m$ are defined as in Lemma \ref{lma:local_stability}. If we define
\begin{equation}\label{BB}
    B \coloneqq \sqrt{2} \| |\mathcal{G}f|^2-|\mathcal{G}g|^2 \|_{\frac{\beta}{2},\infty} C(\sigma,\beta)
\end{equation}
then we have the implication
$$
|p_j - p_m| \leq r \implies |\tau_j-\tau_m| \leq \frac{c_jB}{|g(p_m)|} + \frac{B}{|f(p_m)|(|f(p_m)+|g(p_m)|)}.
$$
\end{lemma}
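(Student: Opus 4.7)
The plan is to route the estimate of $|\tau_j-\tau_m|$ through the value $f(p_m)$ and use the triangle inequality. Specifically, write
$$
|g(p_m)|\,|\tau_j-\tau_m| \;=\; |\tau_j g(p_m)-\tau_m g(p_m)|
\;\leq\; |f(p_m)-\tau_j g(p_m)| \;+\; |f(p_m)-\tau_m g(p_m)|,
$$
so the task reduces to controlling the two quantities on the right. The whole point of introducing $f(p_m)$ as a mediator is that each of the two "local phase corrections'' $\tau_j g$ and $\tau_m g$ is designed to be close to $f$ near the respective anchor point.

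For the first summand, I would apply Lemma \ref{lma:local_stability} at the index $j$. By hypothesis $|p_j-p_m|\leq r$, so $p_m\in I_j=[p_j-r,p_j+r]$, and the conclusion of that lemma together with the definition of $B$ in \eqref{BB} gives
$$
|f(p_m)-\tau_j g(p_m)| \;\leq\; \|f-\tau_j g\|_{L^\infty(I_j)} \;\leq\; c_j B.
$$

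For the second summand, I would compute $\tau_m g(p_m)$ explicitly from the definition of $\tau_m$: substituting into $\tau_m = \overline{g(p_m)}|f(p_m)|/(|g(p_m)|\overline{f(p_m)})$ and simplifying via $g(p_m)\overline{g(p_m)}=|g(p_m)|^2$ and $f(p_m)/\overline{f(p_m)}=f(p_m)^2/|f(p_m)|^2$ yields the clean identity
$$
\tau_m g(p_m) \;=\; \frac{|g(p_m)|}{|f(p_m)|}\,f(p_m),
$$
so that $|f(p_m)-\tau_m g(p_m)|=\bigl||f(p_m)|-|g(p_m)|\bigr|$. Now I would invoke the global modulus stability statement from Lemma \ref{lma:local_stability}, namely $\||f|^2-|g|^2\|_{L^\infty(\R)}\leq B$, together with the elementary identity $|a-b|=|a^2-b^2|/(a+b)$ for positive reals to get
$$
\bigl||f(p_m)|-|g(p_m)|\bigr| \;=\; \frac{\bigl||f(p_m)|^2-|g(p_m)|^2\bigr|}{|f(p_m)|+|g(p_m)|} \;\leq\; \frac{B}{|f(p_m)|+|g(p_m)|}.
$$

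Combining the two bounds and dividing by the nonzero quantity $|g(p_m)|$ produces the asserted inequality (modulo what appears to be a typographical swap of $|f(p_m)|$ and $|g(p_m)|$ in the outer denominator of the second summand). The whole argument is essentially a triangle-inequality bookkeeping exercise, so I do not expect any genuine obstacle; the only moderately subtle point is recognizing that the choice $\tau_m=\overline{g(p_m)}|f(p_m)|/(|g(p_m)|\overline{f(p_m)})$ is precisely engineered so that the error $|f(p_m)-\tau_m g(p_m)|$ collapses to the pure modulus difference $\bigl||f(p_m)|-|g(p_m)|\bigr|$, which in turn is controlled by the spectrogram discrepancy via the already established modulus stability.
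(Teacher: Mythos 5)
Your proof is correct and follows essentially the same route as the paper: both arguments reduce $|\tau_j-\tau_m|$ to the two terms $|f(p_m)-\tau_j g(p_m)|\leq c_jB$ (Lemma \ref{lma:local_stability} at the anchor $p_j$, using $p_m\in I_j$) and $\bigl||f(p_m)|-|g(p_m)|\bigr|\leq B/(|f(p_m)|+|g(p_m)|)$ (the global modulus bound), the paper merely phrasing the split through the auxiliary quantities $\mu,\nu$ instead of your direct triangle inequality via $f(p_m)$. Your observation about the swapped denominator is also consistent with the paper: its own proof ends with $|g(p_m)|$ in the second denominator, matching your bound rather than the $|f(p_m)|$ printed in the lemma statement.
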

\begin{proof}
Since $|p_j-p_m| \leq r$ we have $p_m \in I_j$. Lemma \ref{lma:local_stability} implies that
$$
|f(p_m)-\tau_j g(p_m)| \leq c_j B.
$$
Using the definition of $\tau_j$ we can re-write the left-hand side of the previous inequality as
$$
|f(p_m)-\tau_j g(p_m)| = \left |  \frac{f(p_m)\overline{f(p_j)}|g(p_j)|}{|f(p_j)g(p_j)|} - \frac{g(p_m)\overline{g(p_j)}|f(p_j)|}{|f(p_j)g(p_j)|}    \right | =: |\mu-\nu|.
$$
Since
$$
\tau_j = \frac{\overline{g(p_j)}|f(p_j)|}{|g(p_j)|\overline{f(p_j)}}, \ \ \tau_m = \frac{\overline{g(p_m)}|f(p_m)|}{|g(p_m)|\overline{f(p_m)}}
$$
we obtain with Lemma \ref{lma:local_stability}
\begin{equation*}
    \begin{split}
        |\tau_j-\tau_m| & = \left | \frac{1}{|f(p_m)|} \mu - \frac{1}{|g(p_m)|} \nu  \right | 
         \leq \frac{|\mu-\nu|}{|g(p_m)|} + \frac{||f(p_m)|^2-|g(p_m)|^2|}{|g(p_m)|(|f(p_m)|+|g(p_m)|)} \\
        & \leq \frac{c_j B}{|g(p_m)|} + \frac{B}{|g(p_m)|(|f(p_m)|+|g(p_m)|)}.
    \end{split}
\end{equation*}
\end{proof}

With the help of Lemma \ref{lma:phase_bound}, the local stability result obtained in Lemma \ref{lma:local_stability} transfers to stability estimates on larger intervals by imposing condition \textbf{(P)}.

\begin{theorem}\label{thm:main_stability}
Let $\gamma > 0$ and $p_1 < \cdots < p_J \in \R$ such that $f \in V_\beta^\infty(\varphi)$ satisfies condition \textbf{(P)}. If $I = [p_1-r,p_J+r]$ and $r = \max_{1 \leq j \leq J-1} (p_{j+1}-p_j)$ then for every $g \in V_\beta^\infty(\varphi)$ we have
\begin{equation}\label{stabilityBoundthm}
\begin{split}
& \min_{\tau \in \T} \| f-\tau g \|_{L^\infty(I)} \\ & \lesssim_{\beta, \sigma} (J-1)e^{\frac{r^2}{4\sigma^2}} \frac{\max \{ \| f \|_{L^\infty(I)}^2, \| f \|_{L^\infty(I)}+\| g \|_{L^\infty(I)} \} }{\min \{ \gamma,\gamma^3 \}} \| |\mathcal{G}f|^2-|\mathcal{G}g|^2 \|_{\frac{\beta}{2},\infty}
\end{split}
\end{equation}
where the implicit constant is upper bounded by $\frac{32\sqrt{2}}{3} C(\sigma,\beta)$.
\end{theorem}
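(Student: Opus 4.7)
The strategy is to fix a single global phase $\tau=\tau_1$ and control $\|f-\tau g\|_{L^\infty(I)}$ by combining the local stability of Lemma \ref{lma:local_stability} on each interval $I_j=[p_j-r,p_j+r]$ with a telescoping phase-synchronization argument based on Lemma \ref{lma:phase_bound}. Set $B := \sqrt{2}\,\||\mathcal{G}f|^2-|\mathcal{G}g|^2\|_{\frac{\beta}{2},\infty}\,C(\sigma,\beta)$, the common quantity governing both lemmas.

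First, I would dispose of the regime $B \geq \gamma^2$: there the trivial bound $\|f-\tau g\|_{L^\infty(I)} \leq \|f\|_{L^\infty(I)}+\|g\|_{L^\infty(I)}$ already satisfies \eqref{stabilityBoundthm}, because $B \geq \gamma^2 \geq \min\{\gamma,\gamma^3\}$ and $\max\{\|f\|_{L^\infty(I)}^2,\|f\|_{L^\infty(I)}+\|g\|_{L^\infty(I)}\} \geq \|f\|_{L^\infty(I)}+\|g\|_{L^\infty(I)}$. Hence one may assume $B \leq \gamma^2/2$, and the first assertion of Lemma \ref{lma:local_stability} yields $|g(p_j)|^2 \geq \gamma^2-B \geq \gamma^2/2$ at each node. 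In particular $\tau_j$ and $c_j$ are well-defined and $|f(p_j)|,|g(p_j)| \gtrsim \gamma$ uniformly in $j$, giving the uniform upper bound $c_j \lesssim \gamma^{-1}\bigl(e^{r^2/(4\sigma^2)}+\|f\|_{L^\infty(I)}/\gamma\bigr)$.

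Because consecutive nodes satisfy $p_{j+1}-p_j \leq r$, the intervals $\{I_j\}_{j=1}^J$ cover $I$. Fixing $\tau:=\tau_1$, for $t \in I_j$ the triangle inequality together with Lemma \ref{lma:local_stability} gives
\begin{equation*}
|f(t)-\tau_1 g(t)| \leq c_j B + \|g\|_{L^\infty(I)}\sum_{m=1}^{j-1}|\tau_{m+1}-\tau_m|,
\end{equation*}
and Lemma \ref{lma:phase_bound} applied to each consecutive pair bounds every phase increment by $|\tau_{m+1}-\tau_m| \lesssim (c_m+1/\gamma)B/|g(p_{m+1})| \lesssim \gamma^{-2}\bigl(e^{r^2/(4\sigma^2)}+\|f\|_{L^\infty(I)}/\gamma\bigr)B$. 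Summing at most $J-1$ such increments and taking the supremum over $j$ and over $I_j \subseteq I$ produces a global bound of the shape $(J-1)e^{r^2/(4\sigma^2)}(\dots)B$.

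The main technical obstacle is to repack the various $\gamma$-powers and norms of $f,g$ into the precise numerator $\max\{\|f\|^2_{L^\infty(I)},\|f\|_{L^\infty(I)}+\|g\|_{L^\infty(I)}\}$ and denominator $\min\{\gamma,\gamma^3\}$ in \eqref{stabilityBoundthm}. The quadratic factor $\|f\|_{L^\infty(I)}^2$ appears from the product $c_j\cdot\|g\|_{L^\infty(I)}$ in the phase-synchronization step once $\|g\|_{L^\infty(I)}$ is replaced by $\|f\|_{L^\infty(I)}+O(B^{1/2})$, whereas the linear factor dominates when $\|f\|_{L^\infty(I)}$ is small; analogously, the $\gamma^{-3}$ emerges from the quotient $c_m/|g(p_{m+1})|$ in the regime $\gamma\leq 1$ and collapses to $\gamma^{-1}$ when $\gamma\geq 1$. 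Tracking the explicit $\sqrt{2}\,C(\sigma,\beta)$ constants through the estimates delivers the claimed implicit bound $\tfrac{32\sqrt{2}}{3}C(\sigma,\beta)$.
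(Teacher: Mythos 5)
Your proposal follows essentially the same route as the paper's proof: the same dichotomy on the size of $B=\sqrt{2}\,C(\sigma,\beta)\,\||\mathcal{G}f|^2-|\mathcal{G}g|^2\|_{\frac{\beta}{2},\infty}$ (trivial bound in the large-$B$ regime; for small $B$ the covering $I=\bigcup_j I_j$, the local estimate of Lemma \ref{lma:local_stability}, and the telescoping phase synchronization via Lemma \ref{lma:phase_bound}, which yields the factor $J-1$). Two cosmetic points to repair when writing it out: your two regimes do not meet (you discard $B\geq\gamma^2$ but then assume $B\leq\gamma^2/2$; take $\gamma^2/2$ as the single threshold, as the paper does, since the trivial bound works there too), and the implicit constant $\tfrac{32\sqrt{2}}{3}C(\sigma,\beta)$ is asserted rather than tracked, so your slightly different bookkeeping (e.g. the substitution $\|g\|_{L^\infty(I)}\leq\|f\|_{L^\infty(I)}+\sqrt{B}$ and the bound $|g(p_j)|\geq\gamma/\sqrt{2}$ in place of $\gamma/2$) would need to be checked to stay below it.
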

\begin{proof}
\textbf{Case 1.} Suppose first that $|\mathcal{G}g|^2$ is in a ball of radius $(2\sqrt{2}C(\sigma,\beta))^{-1}\gamma^2$ around $|\mathcal{G}f|^2$ with respect to the mixed norm $\| \cdot \|_{\frac{\beta}{2},\infty}$, that is,
$$
\| |\mathcal{G}f|^2-|\mathcal{G}g|^2 \|_{\frac{\beta}{2},\infty} \leq \frac{\gamma^2}{2\sqrt{2}C(\sigma,\beta)}.
$$
By Lemma \ref{lma:local_stability} we have $\| |f|^2-|g|^2 \|_{L^\infty(\R)} \leq  \sqrt{2} C(\sigma,\beta) \| |\mathcal{G}f|^2-|\mathcal{G}g|^2 \|_{\frac{\beta}{2},\infty}$ and therefore for every $j \in \{ 1,\dots, J \}$ it holds that
$$
||f(p_j)|-|g(p_j)| \leq \frac{||f(p_j)|^2-|g(p_j)|^2|}{|f(p_j)|} \leq \frac{\gamma}{2}.
$$
In particular, $|g(p_j)| \geq \frac{\gamma}{2}$.
Define $I_j \coloneqq [p_j-r,p_j+r], j \in \{ 1,\dots, J \}$. Since $|p_{j+1}-p_j| \leq r$ for all $j$ we have
$
I = \bigcup_{j=1}^J I_j.
$
Now let $\tau_1, \dots, \tau_J \in \T$ be chosen as in Lemma \ref{lma:local_stability}. Then
\begin{equation}\label{eq:l_inf_split}
    \| f-\tau_1g \|_{L^\infty(I)} = \max \{ \| f-\tau_1 g \|_{L^\infty(I_j)} : j=1,\dots, J \}.
\end{equation}
and for a fixed $j \in \{ 1,\dots, J \}$ we have
\begin{equation*}
    \begin{split}
        \| f-\tau_1 g \|_{L^\infty(I_j)} & \leq \| f-\tau_j g \|_{L^\infty(I_j)} + |\tau_1 - \tau_j| \| f \|_{L^\infty(I_j)} \\
        & \leq \| f-\tau_j g \|_{L^\infty(I_j)} + \| f \|_{L^\infty(I)} \sum_{i=1}^{j-1} |\tau_i - \tau_{i+1}| \\
        & \leq \frac{2 B}{\gamma} \left ( e^{\frac{r^2}{4\sigma^2}} + \frac{2\| f \|_{L^\infty(I)}}{3\gamma} \right ) + \| f \|_{L^\infty(I)} \sum_{i=1}^{j-1} |\tau_i - \tau_{i+1}|.
    \end{split}
\end{equation*}
By the assumption on the distance between the $p_j$'s and the definition of the $I_j$'s we have $p_j \in I_{j-1}$ for every $j \in \{ 2,\dots, J \}$. Therefore, Lemma \ref{lma:phase_bound} shows that
\begin{equation*}
    \begin{split}
        \sum_{i=1}^{j-1} |\tau_i - \tau_{i+1}| & \leq \sum_{i=1}^{j-1} \left ( \frac{c_iB}{|g(p_{i+1})|} + \frac{B}{|f(p_{i+1})|(|f(p_{i+1})|+|g(p_{i+1})|)} \right) \\
        & \leq \sum_{i=1}^{j-1} \frac{2c_iB}{\gamma} + \frac{2B}{3\gamma^2} \\
        & \leq \frac{2(j-1)B}{3\gamma^2} + \frac{4(j-1)B}{\gamma^2} \left ( e^{\frac{r^2}{4\sigma^2}} + \frac{2\| f \|_{L^\infty(I)}}{3\gamma} \right )
    \end{split}
\end{equation*}
where $B$ is defined as in \eqref{BB}.
Using standard estimates it follows that,
$$
\| f-\tau_1 g \|_{L^\infty(I_j)} \leq \frac{32}{3} (j-1) e^{\frac{r^2}{4\sigma^2}} \frac{\max \{ \| f \|_{L^\infty(I)}, \| f \|_{L^\infty(I)}^2 \} }{\min \{ \gamma,\gamma^2,\gamma^3 \}} C(\sigma, \beta) B
$$
which implies in combination with equation \eqref{eq:l_inf_split} that
$$
\| f-\tau_1 g \|_{L^\infty(I)} \leq \frac{32}{3} (J-1) e^{\frac{r^2}{4\sigma^2}} \frac{\max \{ \| f \|_{L^\infty(I)}, \| f \|_{L^\infty(I)}^2 \} }{\min \{ \gamma,\gamma^2,\gamma^3 \}} B.
$$
\textbf{Case 2.} 
If
$$
\| |\mathcal{G}f|^2-|\mathcal{G}g|^2 \|_{\frac{\beta}{2},\infty} > \frac{\gamma^2}{2\sqrt{2}C(\sigma,\beta)}
$$
then clearly for every $\tau \in \T$ we have the estimate
$$
\| f-\tau g \|_{L^\infty(I)} \leq \frac{2(\| f \|_{L^\infty(I)} + \| g \|_{L^\infty(I)})}{\gamma^2} B.
$$
The statement follows from case 1 and case 2.
\end{proof}

Additional assumptions on $\sigma$ and $\beta$ yield explicit upper bounds on the constant $C=C(\sigma,\beta)$. To achieve such bounds we first observe that according to Section \ref{subsection:gaussian_generators} the map $\ift \Lambda$ satisfies an exponential decay. Thus, there are constants $K=K(\sigma,\beta), \nu=\nu(\sigma,\beta)>0$ such that
\begin{equation}\label{decay_dual_generator}
    |\ift \Lambda(t)| \leq Ke^{-\nu|t|}.
\end{equation}

In an exemplary way we can derive a result of the following form which will result in explicit upper bounds on $C(\sigma,\beta)$.

\begin{lemma}\label{lma:expl_decay}
If $\frac{\beta}{4} \leq \sigma \leq \frac{\beta}{2} \leq 1$ then $\nu \geq \frac{1}{4}$ and $K \leq \frac{205}{\sigma}$.
\end{lemma}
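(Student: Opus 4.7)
The plan is to combine the Fourier series expansion of the reciprocal theta function with the Gaussian factor in $\Lambda$ so as to express $\ift\Lambda$ as an explicit sum of translated Gaussians, and then to bound that sum term by term. Applying the substitutions $\sigma \to \sigma/\sqrt{2}$ and $\beta \to \beta/2$ to \eqref{theta_identity_1} gives $1/\vartheta_3(\pi\beta t/2,c) = \sum_{n\in\Z} a_n e^{\pi i \beta n t}$ with $c=\varphi(\beta/2)=e^{-\beta^2/(8\sigma^2)}$ and $a_n$ as in \eqref{theta_identity_3}. A term-by-term inverse Fourier transform, combined with the standard Gaussian identity, then produces
\[
\ift\Lambda(s) \;=\; \frac{1}{\sigma\sqrt{\pi}} \sum_{n\in\Z} a_n\, e^{-(s+\beta n/2)^2/\sigma^2}.
\]
For the coefficient bound I would factor out $c^{|n|}$ using the algebraic identity $(m+\tfrac{1}{2})(2|n|+m+\tfrac{1}{2}) = |n|(2m+1) + (m+\tfrac{1}{2})^2$, which yields $|a_n| \leq (2C_0/\xi)\, c^{|n|}$ with $C_0 \coloneqq \sum_{m\geq 0} c^{(m+1/2)^2}$, reducing everything to explicit bounds on $C_0$, $\xi$, and the resulting weighted Gaussian sum.

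Under the hypothesis $\beta/4 \leq \sigma \leq \beta/2 \leq 1$ the parameter $c$ lies in $[e^{-2}, e^{-1/2}]$, giving the upper bound $C_0 \leq c^{1/4}/(1-c^2)$ and a numerical lower bound on $\xi = 2c^{1/4}(1-3c^2+5c^6-\cdots)$ from \eqref{theta_identity_2}. It then remains to bound $\sum_n c^{|n|} e^{-(s+\beta n/2)^2/\sigma^2}$ by a constant multiple of $e^{-|s|/4}$, and by symmetry we may take $s \geq 0$. For $n \geq 0$ I would use the trivial inequality $(s+\beta n/2)^2 \geq s^2$, a geometric summation in $n$, and the elementary estimate $e^{-s^2/\sigma^2} \leq e^{\sigma^2/64} e^{-s/4}$ valid for all $s \geq 0$ and $\sigma \leq 1$. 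For $n < 0$ the exponent $|n|\log(1/c) + (s+\beta n/2)^2/\sigma^2$ is convex in $n$ with second derivative at least $\beta^2/(2\sigma^2)\geq 2$, and its continuous minimum equals $\beta s/(4\sigma^2) - \beta^2/(64\sigma^2)$; since $\beta/(4\sigma^2) \geq 1/\beta \geq 1/2$ under the hypothesis, the minimum decays in $s$ at rate at least $1/2 > 1/4$, and convexity together with a Gaussian tail sum controls the deviation from the minimizer.

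The main obstacle is the explicit lower bound on $\xi$, since for $c$ close to $e^{-1/2}$ the head $1-3c^2$ of the alternating series is already negative. To handle this I would pair successive terms as $5c^6(1-\tfrac{7}{5}c^6) + 9c^{20}(1-\tfrac{11}{9}c^{10}) + \cdots$, each pair being positive for $c<1$, and use this pairing to isolate a positive remainder after the indefinite-sign head. Assembling the estimates on $C_0$, $\xi$, the $n \geq 0$ and $n < 0$ sums, together with the prefactor $(\sigma\sqrt{\pi})^{-1}$, a careful tally of all numerical constants then yields $\nu \geq 1/4$ and $K \leq 205/\sigma$.
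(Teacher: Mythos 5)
Your proposal is correct and follows essentially the same route as the paper's Appendix C proof: the same representation of $\ift \Lambda$ as a sum of translated Gaussians weighted by Janssen's Fourier coefficients of the reciprocal theta function, the same factorization $|a_n| \leq \tfrac{2}{\xi}c^{|n|}\sum_{m\geq 0}c^{(m+\frac{1}{2})^2}$, an explicit lower bound on $\xi(c)$ from the truncated alternating series, and an elementary Gaussian-versus-exponential comparison producing the decay rate $\tfrac14$ (the paper splits the weighted Gaussian sum at $n\approx |t|/\beta$ instead of at $n=0$ and certifies $\xi \geq \tfrac15$ via a polynomial in $x=c^{1/4}$ on $[e^{-1/2},e^{-1/8}]$, but these are routine variants of your estimates, and your constants leave ample slack for $K \leq 205/\sigma$). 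One small correction: the pairs $5c^6(1-\tfrac{7}{5}c^6)$, $9c^{20}(1-\tfrac{11}{9}c^{10})$, \dots are positive only for $c < (5/7)^{1/6}$, not for all $c<1$, which is harmless here because the hypothesis forces $c = e^{-\beta^2/(8\sigma^2)} \in [e^{-2},e^{-1/2}]$.
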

\begin{proof}
See the Appendix \ref{appendix:C}.
\end{proof}

A consequence of Lemma \ref{lma:expl_decay} is an explicit upper bound on the stability constant $C(\sigma,\beta)$: if $\frac{\beta}{4} \leq \sigma \leq \frac{\beta}{2} \leq 1$ then
\begin{equation*}
     C(\sigma,\beta) = \sup_{p \in \R} \sum_{n \in \Z} |\ift \Lambda(p-\tfrac{\beta}{2})| \leq  \frac{205}{\sigma} \left ( 2 + \int_\R e^{-\frac{\beta}{8}|t|} \right ) = \frac{205}{\sigma} \left ( 2+\frac{16}{\beta} \right ).
\end{equation*}

\begin{remark}[On the dependence on $\| f \|_{L^\infty(I)}$ and $\| g \|_{L^\infty(I)}$ in the stability estimte \eqref{stabilityBoundthm}]

Shift-invariant spaces $V_\alpha^p(\varphi^u), \alpha>0, p\in[1,\infty],$ generated by a Gaussian with standard deviation $u>0$ satisfy the norm equivalence
$$
\| h \|_{L^p(\R)} \asymp \| a \|_{\ell^p(\Z)}, \ h \in V_\alpha^p(\varphi^\nu),
$$
where $h$ has defining sequence $a=(a_n)_n \in \ell^p(\Z)$ \cite[Theorem 3.1]{GroechenigSurvey}.
Moreover, recent sampling results for shift-invariant spaces with totally positive generator of Gaussian type show that
$$
\| h \|_{L^p(\R)} \asymp \| h (\Omega) \|_{\ell^p(\Z)}
$$
whenever $\Omega \subset \R$ is separated with lower Beurling density $D^-(\Omega) > \alpha^{-1}$ \cite[Theorem 4.4]{GroechenigRomeroStoeckler}. 
In equation \eqref{frequency_zero} we observed that if $f \in V_\beta^\infty(\varphi)$ has defining sequence $c \in \ell^\infty(\Z)$ then $\mathcal{G}f(\cdot,0) \in V_\beta^\infty(\varphi^{\sqrt{2}\sigma})$ has defining sequence $(d_n)_n$ with $d_n = \sigma \sqrt{\pi} c_n$. Setting $\Omega = \frac{\beta}{2}\Z$ gives $D^-(\Omega) = 2 \beta^{-1}>\beta^{-1}$ and the norm equivalences above imply
$$
\| f \|_{L^\infty(\R)} \asymp \| |\mathcal{G}f| \|_{L^\infty(\R \times \{ 0 \})} \asymp \| |\mathcal{G}f| \|_{\ell^\infty(\frac{\beta}{2}\Z \times \{ 0 \})}.
$$
Consequently, one has
$$
\| f \|_{L^\infty(I)} \lesssim_{\sigma,\beta} \| |\mathcal{G}f| \|_{\ell^\infty(\frac{\beta}{2}\Z \times \{ 0 \})}
$$
with a universal constant depending only on $\sigma$ and $\beta$. With this choice of a sampling set $\Omega$, the right-hand side of the stability estimate \eqref{stabilityBoundthm} can be made dependent only on $\gamma, \sigma, \beta, |\mathcal{G}f(\frac{\beta}{2}\Z \times \R)|$ and $|\mathcal{G}g(\frac{\beta}{2}\Z \times \R)|$.
\end{remark}

\section{Algorithmic approximation in $V_\beta^\infty(\varphi)$}\label{sec:algo}

In the final section of the present article we shall combine our previous results to derive an algorithmic approximation of functions $f \in V_\beta^\infty(\varphi)$ from finitely many spectrogram samples $|\mathcal{G}f(X)|, X \subset \R^2, |X| < \infty$. Assume that the samples $X$ are given on a finite grid of the form
\begin{equation}\label{def:grid_X}
X = \tfrac{\beta}{2} \{ -N, \dots, N \} \times h \{ -H, \dots, H \}
\end{equation}
where $N,H \in \N$ and $h,\beta >0$. We call $\frac{1}{h}$ the sampling density of $X$ in frequency direction. If $\eta \in \R^{(2N+1) \times (2H+1)}$ denotes a noise matrix then the given sampling set $\mathfrak{S}$ takes the form
\begin{equation*}
    \begin{split}
        & \mathfrak{S}=(|\mathcal{G}f(X)|^2+\eta_{n,k})_{n,k} \in \R^{(2N+1) \times (2H+1)}, \\
        & \mathfrak{S}(n,k) = |\mathcal{G}f(\tfrac{\beta}{2}n,hk)|^2 + \eta_{n,k}.
    \end{split}
\end{equation*}
To quantify the contribution of the noise $\eta$ we use the $\ell^\infty$-operator norm of $\eta$,
$$
\| \eta \|_\infty \coloneqq  \sup_{x \neq 0} \frac{\| \eta x \|_\infty}{\|x\|_\infty} = \max_{1 \leq i \leq 2N+1} \sum_{j=1}^{2H+1} |\eta_{ij}|,
$$
which is the maximum absolute row sum of $\eta$. Our only assumption on the noise $\eta$ is that we have control over the norm $\| \eta \|_\infty \leq \delta$ for some noise-level $\delta > 0$. Otherwise, the error is arbitrary and can be adversarial.

\begin{figure}
\centering
   \includegraphics[width=12.5cm]{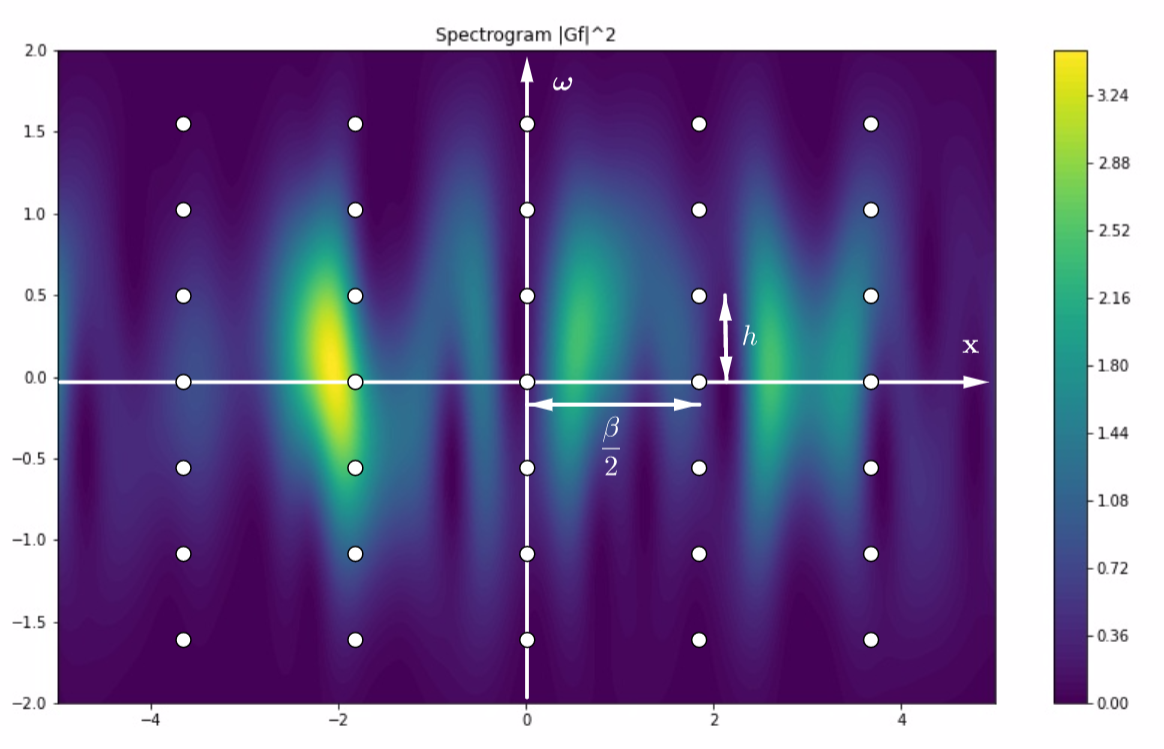}
\caption{The contour plot depicts the spectrogram of a function $f \in V_\beta^\infty(\varphi)$ and the white dots visualize the location of the samples $\mathfrak{S}$. The values $\frac{2}{\beta}$ and $\frac{1}{h}$ are the densities in time ($x$) resp. frequency ($\omega$) direction.}
\end{figure}

\subsection{The numerical approximation routine $\mathcal{R}$}\label{sec:reconstructive_function}

We start by presenting our reconstruction approach in an abstract setting, leaving the ansatz open for extensions to other signal classes (see Section \ref{sec:beyond_V}). Assume for the time-being that the function $f$ has only the property of being measurable and bounded. Let $p_1 < \cdots < p_J$ and $\gamma>0$ such that $f$ satisfies condition \textbf{(P)}. For simplicity we assume that $-p_1=p_J \eqqcolon s$ and set $I=[-s,s]$. The aim is to recover $f$ on the interval $I$. Generalizations to other intervals are straightforward and can be achieved by a simple translation. 
Motivated by Proposition \ref{prop:tensorproduct_properties} and Theorem \ref{thm:explicit_reconstruction} we define constants $c_j$ by
$$
c_j = h\sum_n \sum_k \mathfrak{S}(n,k) T_{\frac{\beta}{2}n} \widetilde{\varphi_0}(p_j),
$$
local reconstructive functions $L_j$ by
$$
L_j(\omega) = \frac{h}{\sqrt{c_j}} \sum_n \sum_k \mathfrak{S}(n,k)e^{2\pi i \omega h k} T_{\frac{\beta}{2}n} \widetilde{\varphi_\omega}(p_j+\omega)
$$
and phases $\nu_0, \dots, \nu_{J-1} \in \T$ by
$$
\nu_0 = 1, \ \ \nu_j = \frac{L_j(p_{j+1}-p_j)}{|L_j(p_{j+1}-p_j)|} \ (j=1,\dots, J-1).
$$
Note that $c_j, L_j$ and $\nu_j$ depend only on the sampling set $\mathfrak{S}$.
Assuming that $L_j$ and $\nu_j$ are well-defined, i.e. $c_j >0$ and $L_j(p_{j+1}-p_j) \neq 0$, gives rise to the definition of a numerical approximation routine.

\begin{definition}\label{def:num_approx_routine}
Suppose that $f : \R \to \C$ is a measurable and bounded map which satisfies condition \textbf{(P)} with $\gamma>0$ and $p_1 < \cdots < p_J \in \R, -p_1=p_J=s$. Let the grid $X$ be defined as in \eqref{def:grid_X} and suppose that $L_j$ and $\nu_j$ are well-defined. The numerical approximation routine $\mathcal{R} : [-s,s] \to \C$ of $f$ is defined as follows: 
if $t \in [-s,s]$ such that $t \in (p_j,p_{j+1}]$ with $t=p_j + \omega$ then
$$
\mathcal{R}(t) = \nu_1 \cdots \nu_{j-1} L_j(\omega).
$$
For $t=p_1=-s$ we define $\mathcal{R}(t)=L_1(0)$.
\end{definition}

In order to derive approximation results by means of the numerical approximation routine $\mathcal{R}$, we define for $j = 1, \dots, J-1$ the error term
\begin{equation}\label{eq:error_term}
    E_j(\omega) \coloneqq |f_\omega(p_j+\omega) - \sqrt{c_j}\overline{L_j(\omega)}|.
\end{equation}
This error term allows us to state conditions under which the functions $L_j$ and the phases $\nu_j$ are well-defined. More importantly, it yields an upper bound on the quotient distance of $f$ and  $\mathcal{R}$.

\begin{theorem}\label{thm:abstract_bound}
Let $f : \R \to \C$ be a measurable and bounded function which satisfies condition \textbf{(P)} with $\gamma>0$ and $p_1 < \cdots < p_J \in \R, -p_1=p_J=s$. If $I=[-s,s]$, $0 < \varepsilon \leq \min \left \{ \frac{\gamma^2}{2\sqrt{8}}, \frac{\gamma^3}{4 \| f \|_{L^\infty(I)}} \right \}$ and
$$
\max_{1\leq j \leq J-1} \| E_j \|_{L^\infty[0,p_{j+1}-p_j]} \leq \varepsilon
$$
then $L_j$ and $\nu_j$ are well-defined for every $j\in\{ 1, \dots, J \}$. Further, it holds that
$$
\min_{\tau \in \T} \| f - \tau \mathcal{R} \|_{L^\infty(I)}  \leq 32(J-1)\frac{\max \{ 1, \| f \|^2_{L^\infty(I)} \}}{\min \{ \gamma, \gamma^5 \}} (\varepsilon + \varepsilon^2).
$$
\end{theorem}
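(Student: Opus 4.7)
The plan is to mirror the proof of Theorem~\ref{thm:main_stability}, replacing the competitor $g\in V_\beta^\infty(\varphi)$ by the numerical routine $\mathcal{R}$ and interpreting the hypothesis $E_j(\omega)\le\varepsilon$ as the discrete counterpart of a small spectrogram gap. Concretely, $\sqrt{c_j}\,\overline{L_j(\omega)}$ will play the role of the exact tensor value $f_\omega(p_j+\omega)$, and every step of the earlier proof carries over once this substitution is made quantitative.

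First, I would establish well-definedness. The bound $|c_j-|f(p_j)|^2|\le E_j(0)\le\varepsilon$ together with $|f(p_j)|\ge\gamma$ and $\varepsilon\le\gamma^2/(2\sqrt 8)$ forces $c_j\ge\gamma^2/2>0$, while the analogous estimate at $\omega=p_{j+1}-p_j$ combined with $|f_\omega(p_j+\omega)|=|f(p_j)||f(p_{j+1})|\ge\gamma^2$ gives $\sqrt{c_j}\,|L_j(p_{j+1}-p_j)|\ge\gamma^2-\varepsilon>0$, so both $L_j$ and $\nu_j$ are meaningful. Next, I would prove a local approximation on each $I_j=[p_j,p_{j+1}]$. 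Setting $\tau_j:=f(p_j)/|f(p_j)|$ and invoking the identity $f(p_j+\omega)=\overline{f_\omega(p_j+\omega)}/\overline{f(p_j)}$ of Proposition~\ref{prop:tensorproduct_properties}(1), the triangle-inequality split
\[
|f(p_j+\omega)-\tau_j L_j(\omega)|\le\frac{E_j(\omega)}{\sqrt{c_j}}+|f(p_j+\omega)|\,\frac{|\sqrt{c_j}-|f(p_j)||}{\sqrt{c_j}},
\]
combined with $\sqrt{c_j}\ge\gamma/\sqrt 2$ and $|\sqrt{c_j}-|f(p_j)||\le\varepsilon/\gamma$, delivers a bound of order $\varepsilon/\gamma+\varepsilon\|f\|_{L^\infty(I)}/\gamma^2$. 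This is precisely the analogue of Lemma~\ref{lma:local_stability} with the spectrogram gap replaced by $\varepsilon$.

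The third step synchronizes phases in the spirit of Lemma~\ref{lma:phase_bound}. Evaluating the local estimate at $\omega=p_{j+1}-p_j$, using $|L_j(p_{j+1}-p_j)|\gtrsim\gamma$, and dividing by moduli shows that $\nu_j$ lies within $O(\varepsilon/\gamma^2+\varepsilon\|f\|_{L^\infty(I)}/\gamma^3)$ of the ideal ratio $\tau_{j+1}/\tau_j$; the extra power of $1/\gamma$ compared to the local bound is exactly what will yield $\gamma^5$ in the final denominator. Finally I would telescope: setting $\tau:=\tau_1$ and writing $\tau_j=\tau\prod_{i<j}(\tau_{i+1}/\tau_i)$ and $\mathcal{R}(p_j+\omega)=\nu_1\cdots\nu_{j-1}L_j(\omega)$, a further triangle inequality gives
\[
\|f-\tau\mathcal{R}\|_{L^\infty(I_j)}\le\|f-\tau_j L_j\|_{L^\infty(I_j)}+\|L_j\|_{L^\infty(I_j)}\sum_{i=1}^{j-1}|\tau_{i+1}-\tau_i\nu_i|.
\]
Bounding $\|L_j\|_{L^\infty(I_j)}\lesssim\|f\|_{L^\infty(I)}+\varepsilon/\gamma$ via Step~2, summing the phase jumps from Step~3 over $i\le j-1\le J-1$, and collecting powers of $\gamma$ and $\|f\|_{L^\infty(I)}$ yields the advertised $(J-1)\max\{1,\|f\|_{L^\infty(I)}^2\}(\varepsilon+\varepsilon^2)/\min\{\gamma,\gamma^5\}$ with a universal constant which routine bookkeeping confirms does not exceed $32$. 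The second constraint $\varepsilon\le\gamma^3/(4\|f\|_{L^\infty(I)})$ enters precisely to absorb a geometric factor $\varepsilon\|f\|_{L^\infty(I)}/\gamma^3\le 1/4$ arising in this last step.

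The main obstacle is Step~3. Synchronizing phases amounts to dividing two approximately-equal complex numbers, and controlling the resulting ratio rigorously rather than by heuristic Taylor expansion is where the extra factor $1/\gamma$ is incurred and where the quadratic term $\varepsilon^2$ of the final bound is born, through cross-products such as $(\sqrt{c_j}-|f(p_j)|)\cdot(L_j(\omega)-\tau_j^{-1}f(p_j+\omega))$. Keeping these corrections quantitatively visible while still landing inside the coarse constant stated by the theorem is the delicate piece of bookkeeping the proof has to carry out.
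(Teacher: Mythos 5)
Your proposal is correct and follows essentially the same route as the paper's proof: well-definedness of $c_j$ from $E_j(0)\le\varepsilon$, the local estimate obtained by splitting $E_j(\omega)/\sqrt{c_j}$ from the $|\sqrt{c_j}-|f(p_j)||$ deviation via the identity $f(p_j+\omega)=\overline{f_\omega(p_j+\omega)}/\overline{f(p_j)}$, the lower bound $|L_j(p_{j+1}-p_j)|\ge\gamma/2$ (which, as you implicitly use, follows from your local estimate at the endpoint together with the two constraints on $\varepsilon$), telescoping the phase errors $|\tau_{i+1}-\tau_i\nu_i|$, and the final bookkeeping with $|L_j|\le\|f\|_{L^\infty(I)}+Q$ producing the $\varepsilon^2$ term. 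This matches the paper's five-step argument, so no further comparison is needed.
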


\begin{proof}
Let $I \coloneqq [-s,s]$ and $I_j \coloneqq (p_j,p_{j+1}]$ for $j=1, \dots, J-1$.

\textbf{Step 1: Upper bounding $\min_{\tau \in \T} \| f - \tau \mathcal{R} \|_{L^\infty(I)}$.} Assume for the time-being that the phases $\nu_j$ and the constants $c_j$ are well-defined.
Let $t \in I$ such that $t \in (p_j,p_{j+1}]$ with $t = p_j + \omega$ and $2 \leq j \leq J-1$. Then
\begin{equation}\label{eq:14}
\begin{split}
& \left | f(t) - \frac{f(p_1)}{|f(p_1)|} \mathcal{R}(t) \right | \\
& \leq \left | f(p_j + \omega) - \frac{f(p_j)}{|f(p_j)|} L_j(\omega) \right | + \left | \frac{f(p_j)}{|f(p_j)|} L_j(\omega) - \frac{f(p_1)}{|f(p_1)|} \nu_1 \cdots \nu_{j-1} L_j(\omega)   \right | \\
& =  \left | f(p_j + \omega) - \frac{f(p_j)}{|f(p_j)|} L_j(\omega) \right | + \underbrace{\left | \frac{f(p_j)}{|f(p_j)|}  - \frac{f(p_1)}{|f(p_1)|} \nu_1 \cdots \nu_{j-1}   \right |}_{\coloneqq A} |L_j(\omega)|.
\end{split}
\end{equation}
The term $A$ can be bounded by
\begin{equation}\label{eq:15}
\begin{split}
A & = \left | \overline{\nu_{j-1}} \frac{f(p_j)}{|f(p_j)|}  - \frac{f(p_1)}{|f(p_1)|} \nu_1 \cdots \nu_{j-2}   \right | \\
& \leq \left | \overline{\nu_{j-1}} \frac{f(p_j)}{|f(p_j)|} - \frac{f(p_{j-1})}{|f(p_{j-1})|} \right | + \left | \frac{f(p_{j-1})}{|f(p_{j-1})|} -  \frac{f(p_1)}{|f(p_1)|} \nu_1 \cdots \nu_{j-2}   \right | \\
& \leq \sum_{i=2}^j \left |  \frac{f(p_i)}{|f(p_i)|} - \frac{f(p_{i-1})}{|f(p_{i-1})|} \nu_{i-1} \right |
\end{split}
\end{equation}
where the third inequality follows by induction. Each summand in the last term of the sum above satisfies
\begin{equation}\label{eq:16}
\begin{split}
& \left |  \frac{f(p_i)}{|f(p_i)|} - \frac{f(p_{i-1})}{|f(p_{i-1})|} \nu_{i-1} \right | \\
& = \left |  \frac{1}{|f(p_i)|}f(p_i) - \frac{1}{|L_{i-1}(p_i - p_{i-1})|} \frac{f(p_{i-1})}{|f(p_{i-1})|}  L_{i-1}(p_i - p_{i-1}) \right | \\
& \leq |f(p_i)| \left | \frac{1}{|f(p_i)|} - \frac{1}{|L_{i-1}(p_i - p_{i-1})|} \right | \\ &  + \frac{1}{|L_{i-1}(p_i - p_{i-1})|} \left | f(p_i) - \frac{f(p_{i-1})}{|f(p_{i-1})|}  L_{i-1}(p_i - p_{i-1}) \right | \\
& = |f(p_i)| \left | \frac{|f(p_i)|-|\frac{f(p_{i-1})}{|f(p_{i-1})|}L_{i-1}(p_i - p_{i-1})|}{|f(p_i)||L_{i-1}(p_i - p_{i-1})|}  \right | \\ &  + \frac{1}{|L_{i-1}(p_i - p_{i-1})|} \left | f(p_i) - \frac{f(p_{i-1})}{|f(p_{i-1})|}  L_{i-1}(p_i - p_{i-1}) \right | \\
& \leq  \frac{2}{|L_{i-1}(p_i - p_{i-1})|} \left | f(p_i) - \frac{f(p_{i-1})}{|f(p_{i-1})|}  L_{i-1}(p_i - p_{i-1}) \right |.
\end{split}
\end{equation}
Combining the inequalties \eqref{eq:14}, \eqref{eq:15} and \eqref{eq:16} gives

\begin{equation*}
    \begin{split}
        & \left | f(t) - \frac{f(p_1)}{|f(p_1)|} \mathcal{R}(t) \right | \leq \left | f(p_j + \omega) - \frac{f(p_j)}{|f(p_j)|} L_j(\omega) \right | \\
        & + 2|L_j(\omega)| \sum_{i=2}^j \frac{1}{|L_{i-1}(p_i - p_{i-1})|} \left | f(p_i) - \frac{f(p_{i-1})}{|f(p_{i-1})|}  L_{i-1}(p_i - p_{i-1}) \right |.
    \end{split}
\end{equation*}

Observe that $p_i \in I_{i-1}$ and $f(p_i)=f(p_{i-1}+(p_i - p_{i-1}))$. Therefore, if we set
$$Q \coloneqq \max_{1 \leq j \leq J-1} \| f(p_j + \cdot) - \tfrac{f(p_j)}{|f(p_j)|} L_j \|_{L^\infty(I_j)}$$
then
$$
\left | f(t) - \frac{f(p_1)}{|f(p_1)|} \mathcal{R}(t) \right | \leq Q + 2Q|L_j(\omega)|\sum_{i=2}^j \frac{1}{|L_{i-1}(p_i - p_{i-1})|}.
$$
Observing that
$$
|L_j(\omega)|  \leq \left | f(p_j + \omega) - \frac{f(p_j)}{|f(p_j)|} L_j(\omega) \right | + |f(p_j+\omega)| \leq Q + \|f \|_{L^\infty(I)},
$$
gives
\begin{equation}\label{eq:rx_bound}
    \min_{\tau \in \T} \| f - \tau \mathcal{R} \|_{L^\infty(I)} \leq  Q + 2Q\left (\|f \|_{L^\infty(I)}+Q \right )\sum_{i=2}^j \frac{1}{|L_{i-1}(p_i - p_{i-1})|}.
\end{equation}

\textbf{Step 2. Well-definedness of $c_j$.}
If the map $S_j$ is defined by
$$
S_j(\omega) = \sqrt{c_j} \overline{L_j(\omega)} = h\sum_n \sum_k \mathfrak{S}(n,k)e^{-2\pi i \omega h k} T_{\frac{\beta}{2}n} \widetilde{\varphi_\omega}(p_j+\omega),
$$
then
$
E_j(\omega) = |f_\omega(p_j+\omega) - S_j(\omega)|.
$
Observe that $c_j = S_j(0), f_0(p_j) = |f(p_j)|^2$ and
$$
c_j \geq |f(p_j)|^2-||f(p_j)|^2-c_j| \geq \gamma^2 -  |f_0(p_j) - S_j(0)| = \gamma^2 - E_j(0).
$$
By assumption we have $E_j(0) \leq \max_{1 \leq j \leq J-1} \| E_j \|_{L^\infty[0,p_{j+1}-p_j]} \leq \varepsilon \leq \frac{\gamma^2}{2}$ which gives
\begin{equation}\label{c_lower_bound}
    c_j \geq \frac{\gamma^2}{2}.
\end{equation}
In particular, the constants $c_j$ are well-defined.

\textbf{Step 3: Upper bounding $E_j(\omega)$.}
We have shown that the constants $c_j$ are well-defined. Recalling that if $f(p_j) \neq 0$ then the identity $\frac{1}{\overline{f(p_j)}} \overline{f_\omega(p_j + \omega)}$ holds for every $\omega \in \R$. We can continue with the following estimate:
\begin{equation}\label{upper_bound_E}
\begin{split}
& | f(p_j + \omega) - \tfrac{f(p_j)}{|f(p_j)|} L_j(\omega) | 
 = \left | \frac{1}{\overline{f(p_j)}} \overline{f_\omega(p_j + \omega)} - \frac{f(p_j)}{|f(p_j)|} \frac{1}{\sqrt{c_j}} \overline{S_j(\omega)} \right | \\
& = \left | \frac{1}{f(p_j)} f_\omega(p_j + \omega) - \frac{|f(p_j)|}{f(p_j)} \frac{1}{\sqrt{c_j}} S_j(\omega) \right | \\
& \leq \frac{1}{\sqrt{c_j}} |f_\omega(p_j+\omega) - S_j(\omega)| + \left | \frac{f(p_j)}{|f(p_j)|} \left ( \frac{1}{f(p_j)} - \frac{|f(p_j)|}{f(p_j)}\frac{1}{\sqrt{c_j}} \right) \right | |f_\omega(p_j + \omega)| \\
& = \frac{1}{\sqrt{c_j}} |f_\omega(p_j+\omega) - S_j(\omega)| + \left | \frac{1}{|f(p_j)|} - \frac{1}{\sqrt{c_j}} \right | |f_\omega(p_j + \omega)| \\
& = \frac{1}{\sqrt{c_j}} |f_\omega(p_j+\omega) - S_j(\omega)| + \frac{|c_j - |f(p_j)|^2|}{\sqrt{c_j}(\sqrt{c_j}+|f(p_j)|)} |f(p_j + \omega)| \\
& \leq  \frac{1}{\sqrt{c_j}} |f_\omega(p_j+\omega) - S_j(\omega)| + \frac{\| f \|_{L^\infty(I)}}{\sqrt{c_j}(\sqrt{c_j}+|f(p_j)|)} |f_0(p_j) - S_j(0)| \\
& = \frac{1}{\sqrt{c_j}} E_j(\omega) + \frac{\| f \|_{L^\infty(I)}}{\sqrt{c_j}(\sqrt{c_j}+|f(p_j)|)} E_j(0).
\end{split}
\end{equation}
Using the bound obtained in equation \eqref{c_lower_bound} together with the fact that $|f(p_j)| \geq \gamma$ yields
\begin{equation}\label{eq:E}
    | f(p_j + \omega) - \tfrac{f(p_j)}{|f(p_j)|} L_j(\omega) | \leq \frac{\sqrt 2}{\gamma} E_j(\omega) + \frac{\| f \|_{L^\infty(I)}}{\gamma^2} E_j(0).
\end{equation}

\textbf{Step 4: Well-definedness of the phases $\nu_j$.}
We use the upper bound on $E_j$ to show that the phases $\nu_j$ are well-defined. First, observe that
$$
|L_j(p_{j+1}-p_j)| \geq |f(p_{j+1})| - |f(p_{j+1}) - \tfrac{f(p_j)}{|f(p_j)|} L_j(p_{j+1}-p_j)|.
$$
Since $E_j(\omega) \leq \max_{1 \leq j \leq J-1} \| E_j \|_{L^\infty[0,p_{j+1}-p_j]} \leq \varepsilon$ for every $\omega \in [0,p_{j+1}-p_j]$ and since $\varepsilon$ satisfies the upper bound $\varepsilon \leq \min \left \{ \frac{\gamma^2}{2\sqrt{8}}, \frac{\gamma^3}{4 \| f \|_{L^\infty(I)}} \right \}$ it follows it follows from the upper bound obtained in \eqref{upper_bound_E} that 
$
    |f(p_{j+1}) - \tfrac{f(p_j)}{|f(p_j)|} L_j(p_{j+1}-p_j)| \leq \frac{\gamma}{2}.
$
In particular, this shows that
\begin{equation}\label{lower_bound_Lj}
    |L_j(p_{j+1}-p_j)| \geq \frac{\gamma}{2}.
\end{equation}
Thus, the phases $\nu_j$ are well-defined for all $j$.

\textbf{Step 5: Combining step 1-4.}

If $Q$ is given as above then from inequality \eqref{eq:E} we obtain
$$
Q \leq \frac{\sqrt 2 \max \{ 1, \| f \|_{L^\infty(I)}\}}{\min \{ \gamma, \gamma^2 \}} \underbrace{\max_{j = 1,\dots, J} \| E_j \|_{L^\infty} + E_j(0)}_{\coloneqq \Xi}
$$
Inequality \eqref{eq:rx_bound} and the lower bound \ref{lower_bound_Lj} implies that
\begin{equation*}
\begin{split}
& \min_{\tau \in \T} \| f - \tau \mathcal{R} \|_{L^\infty(I)}
\leq \frac{\sqrt 2 \max \{ 1, \| f \|_{L^\infty(I)}\} }{\min \{ \gamma, \gamma^2 \}} \Xi \\ & + \frac{4 \sqrt 2 (J-1) \max \{ \| f \|_{L^\infty(I)}, \| f \|_{L^\infty(I)}^2 \} }{\gamma \min \{ \gamma, \gamma^2 \}} \Xi 
+  \frac{8 (J-1) \max \{ 1, \| f \|_{L^\infty(I)}^2\} }{\gamma\min \{ \gamma^2, \gamma^4 \}} \Xi^2 \\
& \leq 8(J-1)\frac{\max \{ 1, \| f \|_{L^\infty(I)}, \| f \|_{L^\infty(I)}^2 \}}{\min \{ \gamma, \gamma^{2},\gamma^4,\gamma^5 \}} (2\Xi + \Xi^2)
\end{split}
\end{equation*}
In particular, the assumption on $\varepsilon$ and the fact that $\min \{ \gamma, \gamma^{2},\gamma^4,\gamma^5 \} = \min \{ \gamma,\gamma^5 \}$ and $\max \{ 1, \| f \|_{L^\infty(I)}, \| f \|_{L^\infty(I)}^2 \} = \max \{ 1, \| f \|_{L^\infty(I)}^2 \}$ gives
$$
\min_{\tau \in \T} \| f - \tau \mathcal{R} \|_{L^\infty(I)}  \leq 32(J-1)\frac{\max \{ 1, \| f \|_{L^\infty(I)}^2 \}}{\min \{ \gamma^1, \gamma^{5} \}} (\varepsilon + \varepsilon^2).
$$
\end{proof}

\subsection{Application to Gaussian shift-invariant spaces}\label{sec:atgss}

Recall that the inner products $\langle f_\omega, T_x\varphi_\omega \rangle$ were given in terms of the Fourier integral
\begin{equation}\label{eq:19}
\langle f_\omega, T_x\varphi_\omega \rangle = \int_\R |\mathcal{G}f(x,t)|^2 e^{-2\pi i \omega t} \, dt.
\end{equation}
To allow an approximation of functions in $f \in V_\beta^\infty(\varphi)$ from $|\mathcal{G}f(X)|$ where $X$ is a set of finitely many sampling points, we discretize the above integral using a suitable quadrature formula. If $\sigma$ is the standard deviation of the Gaussian $\varphi$ then we define $\sigma' \coloneqq \frac{1}{2\pi\sigma}$. The choice of a quadrature formula is based on the following factorization formula of the spectrogram.

\begin{theorem}\label{lma:spectrogram_factorization}
If $f \in V_\beta^\infty(\varphi)$ has defining sequence $c \in \ell^\infty(\Z)$ then the spectrogram of $f$ factors as 
\begin{equation}\label{eq:product_formula}
|\mathcal{G}f(x,t)|^2 = \pi \sigma^2 \varphi^{\sigma'}(t)S_x(t)
\end{equation}
where $S_x(t) = \sum_\ell b_\ell(x)e^{\pi i \beta \ell t}$ is a trigonometric series whose coefficients satisfy the Gaussian bound
$$
|b_\ell(x)| \leq \| c \|_\infty^2 \left ( 1 + \frac{\sigma}{\beta}\sqrt{2\pi} \right ) e^{-\frac{1}{8}(\frac{\beta \ell}{\sigma})^2}.
$$
In particular, for every $x \in \R$ the map $z \mapsto |\mathcal{G}f(x,z)|^2$ extends from $\R$ to an entire function on $\C$. The extension is given by the map $\C \ni z \mapsto \pi \sigma^2 \varphi^{\sigma'}(z)S_x(z)$.
\end{theorem}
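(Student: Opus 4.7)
\medskip

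\noindent\textbf{Proof proposal.} The plan is to carry out a direct calculation of $\mathcal{G}f(x,t)$, expanding $f$ in its defining series and applying the Gaussian product formula term-by-term. Concretely, writing $f=\sum_n c_n T_{\beta n}\varphi$ and interchanging sum and integral (justified by $c\in\ell^\infty$ and the exponential decay of $\varphi$, which makes $\sum_n |c_n|\varphi(s-\beta n)\varphi(s-x)$ integrable in $s$), one obtains
\[
\mathcal{G}f(x,t)=\sum_n c_n \int_\R \varphi(s-\beta n)\varphi(s-x)e^{-2\pi i t s}\,ds.
\]
Applying the product formula used already in Proposition \ref{prop:invariance_tensor_product} with $a=\beta n$, $b=x$ turns each summand into a shifted Gaussian $\varphi^{\sigma/\sqrt{2}}$ whose Fourier transform equals $\sigma\sqrt{\pi}\,e^{-\pi^2\sigma^2 t^2}$. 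After the shift produces the factor $e^{-\pi i t(\beta n+x)}$, this yields the closed form
\[
\mathcal{G}f(x,t)=\sigma\sqrt{\pi}\,e^{-\pi^2\sigma^2 t^2}e^{-\pi i t x}\sum_n c_n e^{-(\beta n-x)^2/(4\sigma^2)}e^{-\pi i \beta n t}.
\]

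Next I would take the squared modulus: the factor $e^{-\pi i t x}$ has modulus one and disappears, while $(\sigma\sqrt{\pi})^2 e^{-2\pi^2\sigma^2 t^2}$ equals $\pi\sigma^2\varphi^{\sigma'}(t)$ by the definition $\sigma'=\frac{1}{2\pi\sigma}$. Expanding the double sum and grouping terms according to $\ell=k-n$ produces the trigonometric series
\[
S_x(t)=\sum_{\ell\in\Z}b_\ell(x)e^{\pi i \beta \ell t},\qquad b_\ell(x)=\sum_{n\in\Z}c_n\overline{c_{n+\ell}}\,e^{-[(\beta n-x)^2+(\beta(n+\ell)-x)^2]/(4\sigma^2)},
\]
so that $|\mathcal{G}f(x,t)|^2=\pi\sigma^2\varphi^{\sigma'}(t)S_x(t)$ as claimed.

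The decay estimate on $b_\ell(x)$ comes from completing the square. Setting $u=\beta n-x$, the identity
\[
u^2+(u+\beta\ell)^2=2\bigl(u+\tfrac{\beta\ell}{2}\bigr)^2+\tfrac{(\beta\ell)^2}{2}
\]
separates the $\ell$-dependence from the $n$-dependence in the exponent, giving
\[
|b_\ell(x)|\leq \|c\|_\infty^2\, e^{-(\beta\ell)^2/(8\sigma^2)}\sum_{n\in\Z}e^{-(\beta n-x+\beta\ell/2)^2/(2\sigma^2)}.
\]
The remaining Gaussian sum is a $\beta$-periodization of a Gaussian in one variable and is uniformly bounded in $x$ and $\ell$ by the standard integral comparison $\sum_n e^{-\alpha(n-\tau)^2}\leq 1+\sqrt{\pi/\alpha}$, which with $\alpha=\beta^2/(2\sigma^2)$ yields $1+\frac{\sigma}{\beta}\sqrt{2\pi}$ and hence the stated bound.

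Finally, for the analytic extension I would observe that $\varphi^{\sigma'}(z)=e^{-2\pi^2\sigma^2 z^2}$ is entire and that, because the super-exponential decay $|b_\ell(x)|\lesssim e^{-(\beta\ell)^2/(8\sigma^2)}$ dominates the exponential growth $|e^{\pi i \beta\ell z}|=e^{-\pi\beta\ell\,\im z}$ on every compact subset of $\C$, the series $S_x(z)$ converges locally uniformly on $\C$ and defines an entire function. The product therefore provides the claimed holomorphic extension of $z\mapsto |\mathcal{G}f(x,z)|^2$ from $\R$ to $\C$. I do not anticipate a genuine obstacle: the whole argument is a sequence of routine manipulations, and the only place requiring any care is the completion of the square that makes the $\ell$-dependence explicit and decouples it from the inner Gaussian sum.
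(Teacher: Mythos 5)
Your proposal is correct and follows essentially the same route as the paper's proof: expand $\mathcal{G}f$ via the Gaussian product formula, take the squared modulus, regroup by $\ell=k-n$, complete the square to split off the factor $e^{-(\beta\ell)^2/(8\sigma^2)}$, bound the remaining periodized Gaussian by $1+\frac{\sigma}{\beta}\sqrt{2\pi}$, and conclude entirety of $S_x$ from the Gaussian decay of the coefficients. The only cosmetic difference is that you bound the periodized Gaussian by a direct sum-versus-integral comparison, whereas the paper invokes the maximality of the theta function at integer points; both yield the same constant.
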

\begin{proof}
See the Appendix \ref{appendix:D}.
\end{proof}

Recall that for $h>0$ and $H \in \N \cup \{ \infty \}$ the trapezoidal rule approximation of a map $W : \R \to \C$ is defined by
\begin{equation}\label{eq:trap_rule}
I_h^H(W) \coloneqq h \sum_{k=-H}^H W(hk).
\end{equation}
Applying this quadrature rule to the Fourier integral \eqref{eq:19} gives
\begin{equation}\label{q_applied}
    I_h^H(t \mapsto |\mathcal{G}f(x,t)|^2 e^{-2\pi i \omega t}) = h\sum_{k=-H}^H |\mathcal{G}f(x,hk)|^2e^{-2\pi i \omega hk},
\end{equation}
which essentially represents the discrete Fourier transform. We apply a classical result on the exponentially convergent trapezoidal rule \cite[Theorem 5.1]{trefethen}.

\begin{theorem}[Trefethen-Weideman]\label{thm:trefethen_w}
Let $a>0$ and suppose that $W$ is an analytic map in the strip $U_a = \{ z \in \C : |\mathrm{Im}(z)| <a \}$. Suppose further that $W(x) \to 0$ uniformly as $|x|\to \infty$ in the strip. If 
$$
M(W) \coloneqq \sup_{t+iy \in U_a} \int_{-\infty}^\infty |W(t+iy)| \, dt < \infty
$$
then, for any $h>0$, the trapezoidal rule approximation $I_h^\infty(W)$ of $\int_\R W$ exists and satisfies
$$
\left |I_h^\infty(W) - \int_\R W \right | \leq \frac{2M(W)}{e^{2\pi a/h}-1}.
$$
The quantity $2M$ in the numerator is optimal.
\end{theorem}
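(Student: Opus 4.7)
The plan is to reduce the statement to two classical facts: the Poisson summation formula and the exponential decay of the Fourier transform of a function that is analytic and integrable in a horizontal strip. Under the hypotheses ($W$ analytic in $U_a$, $W\to 0$ uniformly as $|x|\to\infty$ in the strip, and $M(W)<\infty$) one has in particular $W\in L^1(\R)$, so the Fourier transform $\hat W(\xi)=\int_\R W(x)e^{-2\pi i\xi x}\,dx$ is well defined and, as I will argue, decays exponentially, making $\hat W\in L^1(\R)$ as well. Then Poisson summation is applicable and yields
\[
I_h^\infty(W) \;=\; h\sum_{k\in\Z} W(hk) \;=\; \sum_{n\in\Z}\hat W(n/h),
\]
so that separating the $n=0$ term gives the basic error identity
\[
I_h^\infty(W) - \int_\R W(x)\,dx \;=\; \sum_{n\neq 0}\hat W(n/h).
\]

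Next I would establish exponential decay of $\hat W$ by a contour-shift argument. Fix $\xi>0$ and $0<b<a$. Using analyticity of $W$ in $U_a$ together with the hypothesis that $W(x+iy)\to 0$ uniformly as $|x|\to\infty$ for $|y|\le b$, Cauchy's theorem (applied to tall thin rectangles whose vertical sides tend to infinity) lets one deform the contour $\R$ down to $\R-ib$:
\[
\hat W(\xi) \;=\; \int_\R W(x-ib)\,e^{-2\pi i\xi(x-ib)}\,dx \;=\; e^{-2\pi b\xi}\int_\R W(x-ib)\,e^{-2\pi i\xi x}\,dx.
\]
Taking absolute values and using the definition of $M(W)$ gives $|\hat W(\xi)|\le M(W)e^{-2\pi b\xi}$, and letting $b\uparrow a$ yields $|\hat W(\xi)|\le M(W)e^{-2\pi a\xi}$. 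The same argument, shifted upward to $\R+ib$, treats $\xi<0$, so
\[
|\hat W(\xi)| \;\le\; M(W)\,e^{-2\pi a|\xi|}\qquad(\xi\in\R).
\]

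With this bound in hand, summing the geometric series finishes the convergence estimate:
\[
\left|\,\sum_{n\neq 0}\hat W(n/h)\,\right| \;\le\; 2\,M(W)\sum_{n=1}^{\infty} e^{-2\pi a n/h} \;=\; \frac{2\,M(W)}{e^{2\pi a/h}-1},
\]
which is exactly the stated inequality. For the optimality of the constant $2M(W)$ in the numerator, I would exhibit a near-extremal $W$ such as $W_\eps(x)=\bigl((x-i(a-\eps))(x+i(a-\eps))\bigr)^{-1}$ (or a rescaling/mollification thereof) and verify by direct computation that the error matches the right-hand side asymptotically as $h\to 0$ and $\eps\to 0$; alternatively one invokes the sharpness of the Paley–Wiener type estimate for $\hat W$.

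The main technical obstacle is checking that the hypotheses genuinely license both Poisson summation and the contour shift, since $M(W)<\infty$ alone does not immediately give $L^1$-summability of $W$ on the shifted lines. This is where the uniform decay assumption $W\to 0$ at infinity in the strip enters: combined with $M(W)<\infty$ it guarantees absolute convergence of the boundary integrals and the vanishing of the vertical sides of the Cauchy rectangles, and it makes the exponentially decaying Fourier transform automatically integrable, so that Poisson summation holds pointwise and both series converge absolutely.
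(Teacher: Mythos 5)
The paper does not prove this statement at all: it is quoted verbatim as \cite[Theorem 5.1]{trefethen}, and the proof in that reference runs through a contour-integral representation of the quadrature error, writing $I_h^\infty(W)-\int_\R W$ as an integral of $W$ against the kernel $(e^{\mp 2\pi i z/h}-1)^{-1}$ over the two lines $\mathrm{Im}\,z=\pm b$ and then letting $b\uparrow a$. Your route — Poisson summation plus a Paley--Wiener-type contour shift giving $|\hat W(\xi)|\le M(W)e^{-2\pi a|\xi|}$, followed by summing the geometric series — is a genuinely different and correct argument that yields exactly the same constant; it is arguably more transparent about \emph{why} the rate is $e^{-2\pi a/h}$ (aliasing of the exponentially small Fourier tail), whereas the contour-kernel proof handles the existence of $I_h^\infty(W)$ and the error bound in one stroke without invoking Poisson summation. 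The one step you should tighten is the applicability of Poisson summation itself: the uniform decay $W(x)\to 0$ in the strip does \emph{not} by itself give absolute convergence of $h\sum_k W(hk)$, and your closing paragraph asserts rather than proves this. It does follow from the hypotheses, but via analyticity: for $\rho<\min(h/2,a)$ the sub-mean-value inequality on the disjoint disks $|z-hk|\le\rho$ gives $\sum_k|W(hk)|\le \tfrac{2M(W)}{\pi\rho}<\infty$, which both establishes existence of $I_h^\infty(W)$ and (together with $\hat W\in L^1$ from the exponential bound) licenses the pointwise Poisson formula $h\sum_k W(hk)=\sum_n\hat W(n/h)$. With that lemma inserted, and granting that the optimality claim is only sketched (as it is in the source as well), your proof is complete and correct.
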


Truncating the series which corresponds to the biorthogonal expansion of $f_\omega$ and using the approximation \eqref{q_applied} of the inner products given in \eqref{eq:19}, results in the map
\begin{equation}\label{fct_S}
    S(\omega)= h \sum_{n=-N}^N \sum_{k=-H}^H \mathfrak{S}(n,k) e^{-2\pi i \omega hk} T_{\frac{\beta}{2}n} \widetilde{\varphi_\omega}(p+\omega), \ \ N \in \N,
\end{equation}
which is precisely the function $\sqrt{c_j}\overline{L_j}$ (with $p=p_j$) appearing in the error term $E_j$ defined in equation \eqref{eq:error_term}. The pointwise distance between the map $\omega \mapsto f_{\omega}(p+\omega)$ and $S$ can be upper bounded as follows.

\begin{lemma}\label{lma:error_bound}
Assume that $f \in V_\beta^\infty(\varphi)$ has defining sequence $c \in \ell^\infty(\Z)$.
Suppose that $N=\ceil{\frac{2}{\beta}(s+\frac{r}{2})}+m$ for some $m \in \N$ and some $r,s>0$. Let $h>0$ and $H \in \N$. Then for every $p \in [-s,s]$ and every $\omega \in [0,r]$ we have the error bound
\begin{equation}\label{ineq:22}
\begin{split}
& |f_\omega(p+\omega)-S(\omega)| \\
& \leq 4\sqrt{\pi} Ke^{\frac{\omega^2}{4\sigma^2}} \frac{\sigma}{\nu \beta} \left ( 1 + \frac{\sigma}{\beta}2\sqrt{2\pi} \right )^2 \| c \|_\infty^2 e^{-\frac{\nu\beta}{2}m} \\
        & + 2 \sqrt{\pi} Ke^{\frac{\omega^2}{4\sigma^2} + \frac{|\omega|}{\sigma}+1} \sigma \left ( 2 + \frac{4}{\nu\beta} \right ) \left ( 1 + \frac{\sigma}{\beta}2\sqrt{2\pi} \right )^2\| c \|_\infty^2 \frac{1}{e^{\frac{1}{\sigma h}}-1} \\
        & + \sqrt{\pi} Ke^{\frac{\omega^2}{4\sigma^2}} \sigma \left ( 2 + \frac{4}{\nu\beta} \right ) \left ( 1 + \frac{\sigma}{\beta}2\sqrt{2\pi} \right )^2 \| c \|_\infty^2 e^{-2(\pi H h \sigma)^2} \\
        & + \sqrt{2} Ke^{\frac{\omega^2}{4\sigma^2}} \left ( 2 + \frac{4}{\nu\beta} \right ) h \| \eta \|_\infty
\end{split}
\end{equation}
where $K=K(\sigma,\beta)$ and $\nu=\nu(\sigma,\beta)$ denote the decay constants of the dual generator as defined in \eqref{decay_dual_generator}.
\end{lemma}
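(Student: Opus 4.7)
The plan is to decompose $f_\omega(p+\omega) - S(\omega)$ into four error sources, one for each term on the right-hand side of \eqref{ineq:22}. By Corollary \ref{cor:tp_expansion}, $f_\omega(p+\omega)=\sum_{n\in\Z} c_n T_{\frac{\beta}{2}n}\widetilde{\varphi_\omega}(p+\omega)$ with $c_n=\int_\R |\mathcal{G}f(\tfrac{\beta}{2}n,t)|^2 e^{-2\pi i\omega t}\,dt$. Writing $W_n(t)\coloneqq |\mathcal{G}f(\tfrac{\beta}{2}n,t)|^2 e^{-2\pi i\omega t}$ and inserting the infinite trapezoidal rule $I_h^\infty(W_n)$ as an intermediate quantity, the difference splits as
\begin{equation*}
f_\omega(p+\omega)-S(\omega)=E_1+E_2+E_3-E_4,
\end{equation*}
where $E_1$ is the tail $\sum_{|n|>N}c_n T_{\frac{\beta}{2}n}\widetilde{\varphi_\omega}(p+\omega)$, $E_2=\sum_{|n|\leq N}(c_n-I_h^\infty(W_n))T_{\frac{\beta}{2}n}\widetilde{\varphi_\omega}(p+\omega)$, $E_3=\sum_{|n|\leq N}(I_h^\infty(W_n)-I_h^H(W_n))T_{\frac{\beta}{2}n}\widetilde{\varphi_\omega}(p+\omega)$, and $E_4=h\sum_{|n|\leq N}\sum_{|k|\leq H}\eta_{n,k}e^{-2\pi i\omega hk}T_{\frac{\beta}{2}n}\widetilde{\varphi_\omega}(p+\omega)$. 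These match the four claimed contributions in order.

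For $E_1$, use the formula \eqref{eq:Lambda} for $\widetilde{\varphi_\omega}$ and the exponential decay \eqref{decay_dual_generator} of $\ift\Lambda$. The assumption $N\geq \lceil\tfrac{2}{\beta}(s+\tfrac{r}{2})\rceil+m$ together with $p\in[-s,s]$ and $\omega\in[0,r]$ gives $|p+\omega/2-\tfrac{\beta n}{2}|\geq \tfrac{\beta}{2}(m+|n|-N)$ for $|n|>N$, so the tail of $|\widetilde{\varphi_\omega}|$ sums geometrically and yields the factor $\tfrac{1}{\nu\beta}e^{-\nu\beta m/2}$. A uniform bound on $|c_n|$ is obtained from the factorization $|\mathcal{G}f(x,t)|^2=\pi\sigma^2\varphi^{\sigma'}(t)S_x(t)$ of Theorem \ref{lma:spectrogram_factorization}: integrating against $|e^{-2\pi i\omega t}|=1$ and using the Gaussian bound on the $b_\ell(x)$ together with $\|\varphi^{\sigma'}\|_{L^1}=1/(\sigma\sqrt{2\pi})$ produces a bound proportional to $\sigma\|c\|_\infty^2(1+\sigma\sqrt{2\pi}/\beta)^2$, which accounts for the remaining factors in the first term.

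The heart of the proof is $E_2$, where Theorem \ref{thm:trefethen_w} is applied to each $W_n$. By Theorem \ref{lma:spectrogram_factorization}, $W_n$ extends to an entire function, and on the strip $U_a=\{|\mathrm{Im}\,z|<a\}$ with the choice $a=\sigma'=\tfrac{1}{2\pi\sigma}$ (so that $e^{2\pi a/h}=e^{1/(\sigma h)}$, matching the denominator in the second term), one bounds $M(W_n)$ by controlling three factors: the Gaussian $|\varphi^{\sigma'}(t+iy)|\leq e^{-2\pi^2\sigma^2 t^2}e^{1/2}$, the Fourier series $|S_{\beta n/2}(t+iy)|\leq \sum_\ell |b_\ell| e^{\pi\beta|\ell|a}$ (which is finite and uniform in $n$ because the sub-Gaussian decay of $b_\ell$ defeats any exponential growth), and the modulation $|e^{-2\pi i\omega z}|\leq e^{2\pi|\omega|a}=e^{|\omega|/\sigma}$, which is the origin of the $e^{|\omega|/\sigma}$ factor in the claimed second term. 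Summing over $|n|\leq N$ with the geometric $n$-sum $\sum_n|T_{\frac{\beta}{2}n}\widetilde{\varphi_\omega}(p+\omega)|\lesssim e^{\pm\omega^2/(4\sigma^2)}K(2+4/(\nu\beta))$ completes this piece. The term $E_3$ is controlled by the Gaussian decay $\varphi^{\sigma'}(t)=e^{-2\pi^2\sigma^2 t^2}$: the truncation remainder $h\sum_{|k|>H}|W_n(hk)|$ is bounded by a Gaussian tail integral of size $\lesssim e^{-2(\pi\sigma Hh)^2}$, uniform in $n$ thanks to the same bound on $\sum_\ell|b_\ell|$ as above.

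The noise term $E_4$ is elementary: factoring the double sum and using $|\sum_k \eta_{n,k}e^{-2\pi i\omega hk}|\leq \sum_k|\eta_{n,k}|\leq\|\eta\|_\infty$ (which is precisely the maximum absolute row sum interpretation of $\|\eta\|_\infty$), then invoking the same geometric bound on $\sum_n|T_{\frac{\beta}{2}n}\widetilde{\varphi_\omega}(p+\omega)|$, yields the factor $h\|\eta\|_\infty(2+4/(\nu\beta))$. I expect the main obstacle to be $E_2$: certifying that the Trefethen-Weideman constant $M(W_n)$ admits the clean bound shown, uniformly in both $n$ and $x=\tfrac{\beta n}{2}$, requires simultaneously tracking the interaction between the Gaussian envelope, the sub-Gaussian tail of the $b_\ell$, and the exponential amplification by the modulation—an interplay that must be done carefully to produce constants of the exact form appearing in \eqref{ineq:22}.
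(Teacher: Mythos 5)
Your proposal follows essentially the same route as the paper's Appendix E proof: the same split into the $n$-tail (controlled by the exponential decay of the dual generator together with the choice of $N$), the Trefethen--Weideman estimate on the strip of half-width $a=\sigma'=\tfrac{1}{2\pi\sigma}$ using the analytic factorization of Theorem \ref{lma:spectrogram_factorization}, the Gaussian tail bound for the cut-off in $k$, and the row-sum bound for the noise, the only cosmetic difference being that the paper groups your $E_2,E_3,E_4$ into a single term $\varepsilon_2$ after first computing the quadrature error $\mathcal{E}$. The one coarse spot you already flagged is the bound $|S_x(t+iy)|\le\sum_\ell|b_\ell|e^{\pi\beta|\ell|a}$, which loses a constant factor relative to the stated inequality; to land exactly on the factor $\left(1+\tfrac{\sigma}{\beta}2\sqrt{2\pi}\right)^2 e^{\frac{|\omega|}{\sigma}+1}$ one proceeds as in the paper, absorbing $e^{-\pi\beta\ell y}$ into the Gaussian decay of $b_\ell$ by completing the square and using that the resulting shifted Gaussian sum is maximal at integer shifts.
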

\begin{proof}
See the Appendix \ref{appendix:E}.
\end{proof}

Lemma \ref{lma:error_bound} becomes meaningful if we interpret it in a qualitative way: recall that the constants $K$ and $\nu$ depend only on $\sigma$ and $\beta$. Suppose that $f$ satisfies condition \textbf{(P)} and that the variance $\sigma^2$ and the step-size $\beta$ are fixed, i.e. both the window function and the underlying signal space is fixed.
Under these assumption, it follows from inequality \eqref{ineq:22} and the definition of the error terms $E_j$ that
\begin{equation}\label{eq:qualitative_bound}
\max_{1 \leq j \leq J-1} \| E_j \|_{L^\infty[0,p_{j+1}-p_j]} \lesssim_{\sigma,\beta,r} \| c \|_\infty^2 e^{-am} + \| c \|_\infty^2 \frac{1}{e^{\frac{1}{\sigma h}}-1} + \| c \|_\infty^2 e^{-b(Hh)^2} + h \| \eta \|_\infty
\end{equation}
where $a$ and $b$ are constants depending only on $\sigma$ and $\beta$. The constant $r$ is given by
\begin{equation}
    r \coloneqq \max_{1 \leq j \leq J-1} (p_{j+1}-p_j).
\end{equation}
Let the universal constant in inequality \eqref{eq:qualitative_bound} be denoted by $\mathcal{D} = \mathcal{D}(\sigma,\beta,r)$. Then the following approximation result holds.

\begin{theorem}\label{thm:qualitative}
Assume that $f \in V_\beta^\infty(\varphi)$ has defining sequence $c \in \ell^\infty(\Z)$ and satisfies condition \textbf{(P)} with $p_1 < \cdots < p_J \in \R, -p_1=p_J=s$ and $\gamma>0$. Let $r \coloneqq \max_{1 \leq j \leq J-1} (p_{j+1}-p_j)$ and let $\varepsilon$ be chosen as in Theorem \ref{thm:abstract_bound}, i.e.
$
0 < \varepsilon \leq \min \left \{ \frac{\gamma^2}{2\sqrt{8}}, \frac{\gamma^3}{4 \| f \|_{L^\infty(I)}} \right \}.
$
Suppose that the sampling density $\frac{1}{h}$ in frequency direction satisfies
$$
\frac{1}{h} \geq \sigma \log\left (\frac{4\mathcal{D}\| c \|_\infty^2(J-1)}{\varepsilon}+1\right)
$$
and the measurements are given on the grid
$$
X = \tfrac{\beta}{2}\{-N, \dots, N \} \times h \{ -H,\dots,H \}
$$
such that the grid size characterizing parameters $N,H$ satisfy
$$
H \geq \frac{1}{h}\left ( \frac{1}{b} \log \left ( \frac{4\mathcal{D}\| c \|_\infty^2(J-1)}{\varepsilon} \right ) \right )^{\frac{1}{2}}
$$
and
$$
N \geq \ceil{\tfrac{2}{\beta}(s+\tfrac{r}{2})} +  \frac{1}{a} \log \left ( \frac{4\mathcal{D}\| c \|_\infty^2(J-1)}{\varepsilon} \right ).
$$
Then the following holds: if the noise level does not exceed $\frac{\varepsilon}{4h(J-1)\mathcal{D}}$, i.e. $\| \eta \|_\infty \leq \frac{\varepsilon}{4h(J-1)\mathcal{D}}$, then
\begin{equation}\label{error}
    \min_{\tau \in \T} \| f - \tau \mathcal{R} \|_{L^\infty[-s,s]}  \leq 32\frac{\max \{ 1, \| f \|^2_{L^\infty(I)} \}}{\min \{ \gamma, \gamma^5 \}} (\varepsilon + \varepsilon^2)
\end{equation}
where $\mathcal{R}$ denotes the numerical approximation routine.
\end{theorem}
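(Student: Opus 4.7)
The plan is to reduce the statement to Theorem \ref{thm:abstract_bound} via the qualitative bound \eqref{eq:qualitative_bound} from Lemma \ref{lma:error_bound}, using a mild rescaling of $\varepsilon$ to absorb the factor $(J-1)$ that would otherwise appear on the right-hand side of \eqref{error}.

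The first step is to apply Theorem \ref{thm:abstract_bound} not with the given $\varepsilon$, but with the auxiliary threshold $\varepsilon' \coloneqq \varepsilon/(J-1)$. Since $J \geq 2$ by condition \textbf{(P)}, we have $\varepsilon' \leq \varepsilon$, so $\varepsilon'$ inherits the admissibility constraint $\varepsilon' \leq \min\{\gamma^2/(2\sqrt 8),\, \gamma^3/(4\|f\|_{L^\infty(I)})\}$. If the hypothesis $\max_j \|E_j\|_{L^\infty[0,p_{j+1}-p_j]} \leq \varepsilon'$ of Theorem \ref{thm:abstract_bound} can be verified, that theorem delivers
$$\min_{\tau \in \T} \|f - \tau \mathcal{R}\|_{L^\infty(I)} \;\leq\; 32(J-1)\frac{\max\{1,\|f\|^2_{L^\infty(I)}\}}{\min\{\gamma,\gamma^5\}}(\varepsilon' + \varepsilon'^2),$$
which, using $(J-1)\varepsilon' = \varepsilon$ and $(J-1)\varepsilon'^2 \leq \varepsilon^2$, collapses precisely to the desired bound \eqref{error}. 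Thus the entire task reduces to verifying $\max_j \|E_j\| \leq \varepsilon/(J-1)$.

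The second step is to invoke the four-term qualitative estimate \eqref{eq:qualitative_bound} and force each of the four summands $\mathcal{D}\|c\|_\infty^2 e^{-am}$, $\mathcal{D}\|c\|_\infty^2 (e^{1/(\sigma h)}-1)^{-1}$, $\mathcal{D}\|c\|_\infty^2 e^{-b(Hh)^2}$, and $\mathcal{D} h\|\eta\|_\infty$ to be at most $\varepsilon/(4(J-1))$. Solving each inequality in closed form yields exactly the four hypotheses of the theorem: isolating $m$ in the first gives $m \geq \tfrac{1}{a}\log(4\mathcal{D}\|c\|_\infty^2(J-1)/\varepsilon)$, which via $m = N - \lceil 2(s+r/2)/\beta\rceil$ becomes the stated lower bound on $N$; inverting the second gives $1/h \geq \sigma \log(4\mathcal{D}\|c\|_\infty^2(J-1)/\varepsilon + 1)$; solving the third for $H$ produces $H \geq h^{-1}(b^{-1}\log(4\mathcal{D}\|c\|_\infty^2(J-1)/\varepsilon))^{1/2}$; and the fourth rearranges to $\|\eta\|_\infty \leq \varepsilon/(4h(J-1)\mathcal{D})$. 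Summing the four contributions yields the required $\max_j \|E_j\| \leq \varepsilon/(J-1)$ and closes the argument.

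All substantive analytic work has already been performed upstream: Theorem \ref{thm:abstract_bound} controls the non-linear propagation of the local errors $E_j$ through the phase-synchronization cascade, and Lemma \ref{lma:error_bound} quantifies the four discretization error sources (tail truncation of the biorthogonal expansion, exponentially convergent trapezoidal rule via Theorem \ref{thm:trefethen_w}, truncation of the Gaussian integrand in frequency, and linear noise propagation). Consequently the proof here is mostly bookkeeping; the only genuinely conceptual step is the rescaling $\varepsilon \mapsto \varepsilon/(J-1)$, without which the $(J-1)$ prefactor from Theorem \ref{thm:abstract_bound} would persist in the final estimate and weaken the dependence on the number of anchor points.
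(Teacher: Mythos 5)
Your proposal is correct and follows essentially the same route as the paper: the paper likewise forces each of the four summands $\mathcal{D}\Xi_i$ in \eqref{eq:qualitative_bound} to be at most $\varepsilon/(4(J-1))$ via the stated conditions on $h,H,N,\|\eta\|_\infty$, obtains $\max_j\|E_j\|_{L^\infty[0,p_{j+1}-p_j]}\leq \varepsilon/(J-1)$, and then concludes from Theorem \ref{thm:abstract_bound}, which is exactly your rescaling $\varepsilon\mapsto\varepsilon/(J-1)$ absorbing the $(J-1)$ prefactor. You merely spell out the "elementary manipulations" the paper leaves implicit; no gap.
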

\begin{proof}
Denote the four summands on the right-hand side of inequality \eqref{eq:qualitative_bound} by $\Xi_1, \dots, \Xi_4$. 
It follows that if $\mathcal{D}\Xi_i \leq \frac{\varepsilon}{4(J-1)}$ for $i=1,\dots,4$ then
$$
\max_{j = 1, \dots, J-1} \| E_j \|_{L^\infty[0,p_{j+1}-p_j]} \leq \frac{\varepsilon}{J-1}.
$$
Elementary manipulations show that the condition $\mathcal{D}\Xi_i \leq \frac{\varepsilon}{4(J-1)}$ for $i=1,\dots,4$ is satisfied by the above assumptions on $h,H,N$ and $\| \eta \|_\infty$. The statement is therefore a consequence of Theorem \ref{thm:abstract_bound}.
\end{proof}

Based on the previous statement we can study the growth of the number of samples needed to achieve the bound \eqref{error} for an increasing length of the reconstruction interval $I=[-s,s]$. Consider the noiseless case, $\eta = 0$. Suppose that condition \textbf{(P)} holds on the entire real-line, i.e. there exists a partition
$$
P = \cdots < p_{-1} < p_0 < p_1 < \cdots, \ |f(p_j)| \geq \gamma
$$
so that the maximal distance between two consecutive points $p_j$ and $p_{j+1}$ remains upper bounded by $r$, i.e.
$
\sup_{j \in\Z} (p_{j+1}-p_j) \leq r.
$
By going to a sub-partition we can always assume that for an interval $[-s,s]$ there exists $p_j,p_{k} \in P$ with $p_j \leq -s, p_k \geq s$ and $k-j \leq \frac{4s}{r}+\mathcal{O}(1)$.
Therefore, the assumptions on the bounds on $h,H$ and $N$ as stated in Theorem \ref{thm:qualitative} can be re-written as
$$
\frac{1}{h} \geq \sigma \log\left (\frac{\mathcal{D}'\| c \|_\infty^2s}{\varepsilon}+1\right), H \geq \frac{1}{h}\left ( \frac{1}{b} \log \left ( \frac{\mathcal{D}'\| c \|_\infty^2s}{\varepsilon} \right ) \right )^{\frac{1}{2}},
$$
$$
N \geq \ceil{\tfrac{2}{\beta}(s+\tfrac{r}{2})} +  \frac{1}{a} \log \left ( \frac{\mathcal{D}'\| c \|_\infty^2s}{\varepsilon} \right ).
$$
where $\mathcal{D}'$ is a constant depending only on $\sigma,\beta$ and $r$. A direct consequence is
\begin{corollary}\label{cor:upper_growth_estimate}
Under the assumptions above, the following holds: in order to achieve a reconstruction of $f \in V_\beta^\infty(\varphi)$ with defining sequence $c \in \ell^\infty(\Z)$ on an interval $I=[-s,s]$ of length $2s$ up an error bound of the form \eqref{error} using the numerical approximation routine $\mathcal{R}$, the number $\mathcal{N}(s)$ of spectrogram samples satisfies the upper growth estimate
$$
\mathcal{N}(s) \lesssim_{\sigma,\beta, r} \log \left ( \frac{s\| c \|_\infty^2}{\varepsilon} \right )^{\frac{3}{2}} \left ( s + \log \left ( \frac{s\| c \|_\infty^2}{\varepsilon} \right ) \right ). 
$$
\end{corollary}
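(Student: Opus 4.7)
The plan is to read off the growth of $\mathcal{N}(s)=(2N+1)(2H+1)$ directly from the parameter bounds on $h,H,N$ supplied by Theorem~\ref{thm:qualitative}, after first normalising the dependence on the number $J$ of anchor points $p_j$.

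First I would argue, as in the paragraph preceding the corollary, that the partition $P$ can be replaced by a sub-partition adapted to $[-s,s]$. Concretely, since $\sup_j(p_{j+1}-p_j)\leq r$, one can choose indices $j<k$ with $p_j\leq -s$, $p_k\geq s$ and $k-j \leq \tfrac{4s}{r}+\mathcal{O}(1)$; the resulting points $p_j<\cdots<p_k$ still satisfy condition \textbf{(P)} with the same $\gamma$ and with maximal gap $\leq r$, and the effective number of anchor points obeys $J-1\lesssim_r s$. Plugging this into Theorem~\ref{thm:qualitative} shows that the logarithmic factors of the form $\log(J\|c\|_\infty^2/\varepsilon)$ are all absorbed into $\log(s\|c\|_\infty^2/\varepsilon)$ up to a constant depending only on $\sigma,\beta,r$, yielding the bounds on $1/h$, $H$ and $N$ displayed just before the corollary.

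Next I would simply multiply out these three bounds. The minimal admissible choices are
\[
\tfrac{1}{h}\asymp_{\sigma,\beta,r}\log\!\bigl(\tfrac{s\|c\|_\infty^2}{\varepsilon}+1\bigr),\qquad H\asymp_{\sigma,\beta,r}\tfrac{1}{h}\,\bigl(\log(\tfrac{s\|c\|_\infty^2}{\varepsilon})\bigr)^{1/2},\qquad N\asymp_{\sigma,\beta,r} s+\log\!\bigl(\tfrac{s\|c\|_\infty^2}{\varepsilon}\bigr),
\]
so that
\[
\mathcal{N}(s)=(2N+1)(2H+1)\lesssim_{\sigma,\beta,r}\Bigl(s+\log\tfrac{s\|c\|_\infty^2}{\varepsilon}\Bigr)\,\bigl(\log\tfrac{s\|c\|_\infty^2}{\varepsilon}\bigr)^{3/2},
\]
which is the claimed estimate. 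Under the noiseless assumption $\eta=0$ the constraint $\|\eta\|_\infty\leq \varepsilon/(4h(J-1)\mathcal{D})$ of Theorem~\ref{thm:qualitative} is automatic, so the error bound \eqref{error} is indeed achieved with this many samples.

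There is no real obstacle beyond bookkeeping: the work was already done in Theorem~\ref{thm:qualitative} and Lemma~\ref{lma:error_bound}. The only point that needs a bit of care is making the dependence of the universal constant $\mathcal{D}=\mathcal{D}(\sigma,\beta,r)$ explicit so that, after the sub-partition reduction, all remaining $r$-dependence is packed into the implicit constant $\lesssim_{\sigma,\beta,r}$ and none of it leaks into the exponent of the logarithm or the factor multiplying $s$.
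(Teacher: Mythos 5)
Your proposal is correct and follows essentially the same route as the paper: pass to a sub-partition of $[-s,s]$ so that $J-1\lesssim_r s$, rewrite the bounds of Theorem \ref{thm:qualitative} on $1/h$, $H$, $N$ with $J$ replaced by $s$, and multiply $(2N+1)(2H+1)$ to obtain $(s+\log\tfrac{s\|c\|_\infty^2}{\varepsilon})\log(\tfrac{s\|c\|_\infty^2}{\varepsilon})^{3/2}$, with the noiseless assumption disposing of the noise constraint. This is exactly how the paper deduces the corollary, so nothing further is needed.
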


Assumptions on $\sigma$ and $\beta$ turn the qualitative results above into quantitative results by means of explicit upper bounds on $a,b$ and $\mathcal{D}$. This can be done in a similar fashion as at the end of Section 3 and exploiting Lemma \ref{lma:expl_decay}.

\begin{proposition}
If $\frac{\beta}{4} \leq \sigma \leq \frac{\beta}{2} \leq 1$ and $r \leq 2 \sigma$ then
$$
\mathcal{D} \leq 40000 \left ( 2+ \frac{16}{\beta} \right ) \left ( 1+\frac{\sigma}{\beta} 2\sqrt{2\pi} \right )^2, \ \ a \geq \frac{\beta}{8}, \ \ b \geq 2\pi^2\sigma^2.
$$
\end{proposition}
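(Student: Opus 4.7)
The proposition is a numerical bookkeeping result: one substitutes the explicit decay bounds from Lemma~\ref{lma:expl_decay} into the error estimate \eqref{ineq:22} and exploits the extra hypothesis $r \leq 2\sigma$ to control the $\omega$-dependent exponentials. My plan has three steps.

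First I would read off $a$ and $b$ directly from the exponents in \eqref{ineq:22}. Term~1 carries the factor $e^{-(\nu\beta/2)m}$ and term~3 carries $e^{-2\pi^2\sigma^2(Hh)^2}$, so in the qualitative bound \eqref{eq:qualitative_bound} one has $a = \nu\beta/2$ and $b = 2\pi^2\sigma^2$. The lower bound $\nu \geq 1/4$ from Lemma~\ref{lma:expl_decay} immediately yields $a \geq \beta/8$, while $b = 2\pi^2\sigma^2$ requires nothing further.

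Next I would estimate the four prefactors of \eqref{ineq:22} uniformly over $|\omega| \leq r \leq 2\sigma$ and take the maximum. The constraint on $r$ gives $e^{\omega^2/(4\sigma^2)} \leq e$ and $e^{\omega^2/(4\sigma^2)+|\omega|/\sigma+1} \leq e^4$. The bounds $\nu \geq 1/4$ and $\sigma \leq \beta/2$ simplify the algebraic factors as $2+4/(\nu\beta) \leq 2+16/\beta$ and $\sigma/(\nu\beta) \leq 2$. The bound $K \leq 205/\sigma$, combined with the explicit $\sigma$ appearing in terms~2 and~3, yields $K\sigma \leq 205$, cancelling the $1/\sigma$ dependence there. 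The dominant contribution comes from term~2, whose prefactor becomes at most $2\sqrt{\pi}\cdot 205\cdot e^{4} \approx 39675$ times $(2+16/\beta)(1+\tfrac{\sigma}{\beta}2\sqrt{2\pi})^2$, producing the claimed coefficient $40000$.

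The only genuinely delicate check is for term~4, which in \eqref{ineq:22} lacks the trigonometric-coefficient factor $(1+\tfrac{\sigma}{\beta}2\sqrt{2\pi})^2$ while retaining a lone~$K$. Using $K \leq 205/\sigma \leq 820/\beta$ (via $\sigma \geq \beta/4$), its prefactor is bounded by a multiple of $(2+16/\beta)/\sigma$, which is absorbed into $40000(2+16/\beta)(1+\tfrac{\sigma}{\beta}2\sqrt{2\pi})^2$ by combining $1/\sigma \leq 4/\beta$ with the trivial bound $(1+\tfrac{\sigma}{\beta}2\sqrt{2\pi})^2 \geq 1$. Term~1 is smaller after applying $\sigma/(\nu\beta) \leq 2$, and term~3 has the modest numerical prefactor $\sqrt{\pi}\cdot 205\cdot e \approx 987$. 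Taking the maximum over all four prefactors then gives $\mathcal{D} \leq 40000(2+16/\beta)(1+\tfrac{\sigma}{\beta}2\sqrt{2\pi})^2$, completing the plan.
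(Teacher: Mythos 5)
Your route is the same as the paper's: the printed proof is literally ``apply the bounds on $K$ and $\nu$ from Lemma~\ref{lma:expl_decay} to each term of \eqref{ineq:22}'', and your bookkeeping for $a$, $b$ and terms 1--3 matches what is intended. Reading off $a=\nu\beta/2\geq\beta/8$ and $b=2\pi^2\sigma^2$, using $r\leq 2\sigma$ to get $e^{\omega^2/(4\sigma^2)}\leq e$ and $e^{\omega^2/(4\sigma^2)+|\omega|/\sigma+1}\leq e^4$, and noting that the dominant prefactor $2\sqrt{\pi}\cdot 205\cdot e^4\approx 3.97\times 10^4$ from term~2 is what calibrates the constant $40000$ --- all of this is correct and is clearly how the constant was obtained.

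However, your treatment of term~4 (which you rightly single out as the delicate one) does not actually close. Its prefactor is $\sqrt{2}\,K\,e^{\omega^2/(4\sigma^2)}\bigl(2+\tfrac{4}{\nu\beta}\bigr)\leq \sqrt{2}\,e\,\tfrac{205}{\sigma}\bigl(2+\tfrac{16}{\beta}\bigr)$, so after cancelling the common factor $\bigl(2+\tfrac{16}{\beta}\bigr)$ you need $\tfrac{205\sqrt{2}e}{\sigma}\leq 40000\bigl(1+\tfrac{\sigma}{\beta}2\sqrt{2\pi}\bigr)^2$. Your justification --- ``$1/\sigma\leq 4/\beta$ together with $\bigl(1+\tfrac{\sigma}{\beta}2\sqrt{2\pi}\bigr)^2\geq 1$'' --- reduces this to $3152/\beta\leq 40000$, i.e.\ $\beta\gtrsim 0.08$; even using the true lower bound $\bigl(1+\sqrt{2\pi}/2\bigr)^2\approx 5.08$ one still needs $\sigma\gtrsim 0.004$ (roughly $\beta\gtrsim 0.016$). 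The hypotheses only give $0<\beta\leq 2$, so for small $\beta$ this step fails. Structurally, term~4 is the only summand of \eqref{ineq:22} in which the $1/\sigma$ coming from $K\leq 205/\sigma$ is not cancelled by an explicit factor $\sigma$: its prefactor scales like $\sigma^{-1}\bigl(2+\tfrac{16}{\beta}\bigr)\sim\beta^{-2}$, while the claimed bound on $\mathcal{D}$ scales like $\beta^{-1}$, so no absorption argument of the kind you sketch can work uniformly; one must either assume a lower bound on $\beta$ (or $\sigma$), or allow an extra factor $1/\sigma$ in $\mathcal{D}$ for the noise term. (In fairness, the paper's one-line proof glosses over exactly the same point; note also that in term~1 the uncancelled $1/\sigma$ genuinely is absorbed, because that term carries $\tfrac{\sigma}{\nu\beta}$ rather than $\bigl(2+\tfrac{4}{\nu\beta}\bigr)$, so the resulting $4/\beta$ is dominated by the $16/\beta$ inside $\mathcal{D}$ --- your compressed remark there is fine.)
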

\begin{proof}
This follows by applying the bounds on $K$ and $\nu$ derived in Lemma \ref{lma:expl_decay} to each term on the right-hand side of inequality \eqref{ineq:22}. 
\end{proof}

\subsection{Algorithm}

In a natural way, the foregoing results lead to a reconstruction algorithm whenever $f \in V_\beta^\infty(\varphi)$ satisfies condition \textbf{(P)}. This is an assumption on the signal itself which is unknown if only spectrogram samples are accessible. However, we have shown that the map $f_0=|f|^2$ can be recovered in a globally stably way by means of a biorthogonal expansion. Let the grid $X$ and the sampling set $\mathfrak{S}$ be given as above and set
$$
F = F_{N,H} = h \sum_{n=-N}^N \sum_{k=-H}^H \mathfrak{S}(n,k) T_{\frac{\beta}{2}n}\widetilde{\varphi_0}.
$$
If $\eta = 0$ then $F_{N,H} \to |f|^2$ uniformly on compact intervals. This motivates the following 4-step approximation procedure.


\begin{algorithm}[h]\label{algorithm:main}
\SetAlgoLined
\textbf{Input: $\mathfrak{S}=(|\mathcal{G}f(\tfrac{\beta}{2}n,hk)|^2+\eta_{n,k})_{n,k} \in \R^{(2N+1) \times (2H+1)}$, $ \tilde r,\tilde \gamma,s>0$}\;
\textbf{Output: $\mathcal{R} : [-s,s] \to \C$}
\begin{enumerate}
\setlength\itemsep{0em}
\item Set $F = h \sum_n \sum_k \mathfrak{S}(n,k) T_{\frac{\beta}{2}n}\widetilde{\varphi_0}$
\item Find $p_1 < \cdots < p_J \in I = [-s,s], J \geq 2$, such that $F(p_j) \geq \gamma$, $p_{j+1}-p_j\leq r$ and $p_{j+2}-p_j \geq r$ (if $J \geq 3$) for all $j \in \{ 1,\dots, J \}$
\item Define for $j \in \{ 1,\dots, J \}$ local reconstructive functions $L_j$ by
$$
L_j(\omega) = F(p_j)^{-\frac{1}{2}} h \sum_{n=-N}^N \sum_{k=-H}^H \mathfrak{S}(n,k)e^{2\pi i \omega h k} T_{\frac{\beta}{2}n} \widetilde{\varphi_\omega}(p_j+\omega)
$$
and phases $\nu_j$ by $$
\nu_0 = 1, \ \ \nu_j = \frac{L_j(p_{j+1}-p_j)}{|L_j(p_{j+1}-p_j)|} \  (j=2,\dots, J-1).
$$
\item Set $\mathcal{R}(t) = \nu_1 \cdots \nu_{j-1} L_j(\omega)$ if $t \in (p_j,p_{j+1}]$ with $t = p_j+\omega$ and $j=1, \dots, J-1$; $\mathcal{R}(p_1) = L_1(0)$ and $\mathcal{R}(t)=0$ otherwise
\end{enumerate} 
\caption{Reconstruction in $V_\beta^\infty(\varphi)$}
\end{algorithm}

Algorithm 1 approximates function in $V_\beta^\infty(\varphi)$ in a provably stable way.

\begin{theorem}\label{thm:numerical_sampling}
Suppose that $f \in V_\beta^\infty(\varphi)$ has defining sequence $c \in \ell^\infty(\Z)$ and $\| f \|_{L^\infty(\R)}, \| c \|_{\ell^\infty(\Z)} \leq \mathcal{L}$. Let $r,\gamma,s>0$ and
$$
0 < \varepsilon \leq \min \left \{ \frac{\gamma^2}{2\sqrt{8}}, \frac{\gamma^3}{4 \mathcal{L}} \right \}.
$$
Suppose that the samples are given on the grid $X$ with
$$
\frac{1}{h} \geq \sigma \log\left (\frac{16\mathcal{D}\mathcal{L}^2s}{\varepsilon r}+1\right), \
H \geq \frac{1}{h}\left ( \frac{1}{b} \log \left ( \frac{16\mathcal{D}\mathcal{L}^2s}{\varepsilon r} \right ) \right )^{\frac{1}{2}}, 
$$
$$
N \geq \ceil{\tfrac{2}{\beta}(s+\tfrac{r}{2})} +  \frac{1}{a} \log \left ( \frac{16\mathcal{D}\mathcal{L}^2s}{\varepsilon r} \right )
$$
and $\mathcal{D},a,b$ are defined as above. If the noise level satisfies $\| \eta \|_\infty \leq \frac{\varepsilon r}{16h\mathcal{D}s}$ then the following holds.
\begin{enumerate}\setlength{\itemsep}{0pt}
    \item Let $p_1 < \cdots < p_J \in I$ be the points detected in Step 2 of Algorithm 1 with input parameters $\tilde r = r, \tilde \gamma = \frac{3}{2}\gamma^2, \tilde s = s$. Then $p_1 < \cdots < p_J$ satisfy condition \textbf{(P)} with constants $r$ and $\gamma$
    \item If $p_1 < \cdots < p_J \in I$ are the points detected in Step 2 of Algorithm 1 with input parameters $\tilde r = r, \tilde \gamma = \frac{3}{2}\gamma^2, \tilde s = s$ and if $\mathcal{R}$ is the output function of Algorithm 1 then
    $$
    \min_{\tau \in \T} \| f - \tau \mathcal{R} \|_{L^\infty[p_1,p_J]}  \leq 32\frac{\max \{ 1, \mathcal{L}^2 \}}{\min \{ \gamma, \gamma^5 \}} (\varepsilon + \varepsilon^2)
    $$
\end{enumerate}
\end{theorem}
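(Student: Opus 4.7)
The plan is to reduce Theorem \ref{thm:numerical_sampling} to the abstract reconstruction bound of Theorem \ref{thm:abstract_bound}, the new ingredient being that condition (\textbf{P}) will be verified \emph{a posteriori} for the points returned by Algorithm 1, from data alone. The central observation is that Lemma \ref{lma:error_bound} simultaneously controls two quantities: the pointwise approximation of $|f|^2$ by the function $F$ constructed in Step 1 of the algorithm (taking $\omega=0$), and the error terms $E_j$ needed to invoke Theorem \ref{thm:abstract_bound} (taking $\omega \in [0,p_{j+1}-p_j]$).

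I would first note that $F(p) = S(0)$ at the argument $p$, in the notation of equation \eqref{fct_S}. Invoking Lemma \ref{lma:error_bound} with $\omega=0$ and using $\|c\|_\infty \leq \mathcal{L}$ yields the qualitative bound \eqref{eq:qualitative_bound} at $\omega=0$. Under the stated lower bounds on $1/h$, $H$, $N$ and the noise hypothesis $\|\eta\|_\infty \leq \varepsilon r/(16 h \mathcal{D} s)$, each of the four summands on the right-hand side is at most $\varepsilon r/(16 s)$, so
\begin{equation*}
\| F - |f|^2 \|_{L^\infty[-s,s]} \leq \tfrac{\varepsilon r}{4 s} \leq \varepsilon \leq \tfrac{\gamma^2}{2\sqrt{8}} < \tfrac{\gamma^2}{2}.
\end{equation*}
This is the uniform control needed to justify the detection step. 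For part (1), any $p_j$ returned by Step 2 satisfies $F(p_j) \geq \tilde\gamma = \tfrac{3}{2}\gamma^2$, whence $|f(p_j)|^2 \geq F(p_j) - \gamma^2/2 \geq \gamma^2$, giving $|f(p_j)| \geq \gamma$; combined with the algorithmic spacing $p_{j+1}-p_j \leq r$ this delivers condition (\textbf{P}). The auxiliary spacing $p_{j+2}-p_j \geq r$ enforced in Step 2, summed in pigeonhole fashion over pairs of indices, provides the crucial bound $J - 1 \leq 4 s / r + O(1)$.

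For part (2), the same reasoning applied to general $\omega \in [0,p_{j+1}-p_j] \subseteq [0,r]$ (where the extra factors $e^{\omega^2/(4\sigma^2)}$ in Lemma \ref{lma:error_bound} are absorbed into the constant $\mathcal{D} = \mathcal{D}(\sigma,\beta,r)$) produces $\max_j \|E_j\|_{L^\infty[0, p_{j+1}-p_j]} \leq \varepsilon r/(4s) =: \varepsilon'$. The pigeonhole bound on $J$ then translates into $(J-1)\varepsilon' \lesssim \varepsilon$, so Theorem \ref{thm:abstract_bound} applied with error parameter $\varepsilon'$ produces a bound of the form $32(J-1) \max\{1,\mathcal{L}^2\}/\min\{\gamma,\gamma^5\} \cdot (\varepsilon' + {\varepsilon'}^2)$, which collapses to the claimed $32\max\{1,\mathcal{L}^2\}/\min\{\gamma,\gamma^5\}\cdot(\varepsilon+\varepsilon^2)$ after the cancellation of $(J-1)$ against the scaling of $\varepsilon'$. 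The main obstacle to executing this cleanly is the constant calibration: the factor $16 s/r$ in the sampling density hypotheses and the gap between $\tilde\gamma = \tfrac{3}{2}\gamma^2$ and $\gamma^2$ in the detection threshold have to be chosen so that the single set of hypotheses simultaneously guarantees (i) the uniform bound $\gamma^2/2$ required for correct detection and (ii) a value of $\varepsilon'$ small enough that the $(J-1)$ prefactor in Theorem \ref{thm:abstract_bound} is absorbed without degrading the constant $32$ in the final estimate.
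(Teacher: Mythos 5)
Your proposal is correct and follows essentially the same route as the paper's proof: Lemma \ref{lma:error_bound} at $\omega=0$ gives $\| F-|f|^2\|_{L^\infty[-s,s]}\leq \tfrac{\varepsilon r}{4s}\leq\tfrac{\gamma^2}{2}$ so the threshold $\tilde\gamma=\tfrac{3}{2}\gamma^2$ certifies condition \textbf{(P)}, and then Theorem \ref{thm:abstract_bound} (via Theorem \ref{thm:qualitative}) is applied with error parameter $\varepsilon'=\tfrac{\varepsilon r}{4s}$. The calibration you flag as the main obstacle is resolved exactly as you anticipate: the condition $p_{j+2}-p_j\geq r$ enforced in Step 2 yields $J-1\leq \tfrac{4s}{r}$, whence $(J-1)\varepsilon'\leq\varepsilon$ and $(J-1)\varepsilon'^2\leq\varepsilon^2$, so the prefactor $32$ survives unchanged.
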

\begin{proof}
(1) If $p_1 < \cdots < p_J \in I=[-s,s]$ are the points detected in Step 2 of Algorithm 1 then the condition $p_{j+2}-p_j \geq r$ implies that $\frac{4s}{r}+1 \geq J$ and therefore $J-1 \leq \frac{4s}{r}$. Let $p \in [-s,s]$, let $S$ be defined as in equation \eqref{fct_S} and let $F$ be the function defined in Step 2 of Algorithm 1. Then $S(0)=F(p)$. Combining the inequalities \eqref{ineq:22} and \eqref{eq:qualitative_bound} with the assumptions on $h,H,N$ and arguing in an analogue fashion as in the proof of Theorem \ref{thm:qualitative} yields
\begin{equation*}
    \begin{split}
        ||f(p)|^2 - F(p)| & = |f_0(p)-S(0)| \\
        & \leq \mathcal{D}\left ( \| c \|_\infty^2 e^{-am} + \| c \|_\infty^2 \frac{1}{e^{\frac{1}{\sigma h}}-1} + \| c \|_\infty^2 e^{-b(Hh)^2} + h \| \eta \|_\infty \right ) \\
        & \leq \frac{r\varepsilon}{4s} \leq \varepsilon \leq \frac{1}{2}\gamma^2.
    \end{split}
\end{equation*}
Since $p$ was arbitrary, this estimate holds for every $p \in [-s,s]$. Let $p_j \in I$ be one of the detected points. Then $F(p_j) \geq \tilde \gamma = \frac{3}{2}\gamma^2$ and therefore
$$
|f(p_j)|^2 \geq F(p_j) - |F(p_j)-|f(p_j)|^2| \geq \frac{3}{2}\gamma^2 - \frac{1}{2}\gamma^2 = \gamma^2.
$$
Since $j \in \{ 1,\dots, J \}$ was arbitrary and $J \geq 2$, we conclude from the inequality above that the points $p_1 < \cdots < p_J$ satisfy condition \textbf{(P)}.

(2) According to Part 1, the points $p_1 < \cdots < p_J$ satisfy condition \textbf{(P)}. The map $\mathcal{R}$ defined in Step 4 of Algorithm 1 is precisely the numerical approximation routine as defined in Definition \ref{def:num_approx_routine} restricted to $[p_1, p_J]$. Observing that $J-1 \leq \frac{4s}{r}, [p_1, p_J] \subset [-s,s]$ and $\| f \|_{L^\infty(\R)}, \| c \|_{\ell^\infty(\Z)} \leq \mathcal{L}$ and consulting Theorem \ref{thm:qualitative} implies the assertion.
\end{proof}

\begin{figure}[h]
\label{fig:example_sync}
\centering
\hspace*{-1.9cm}
  \includegraphics[width=16cm]{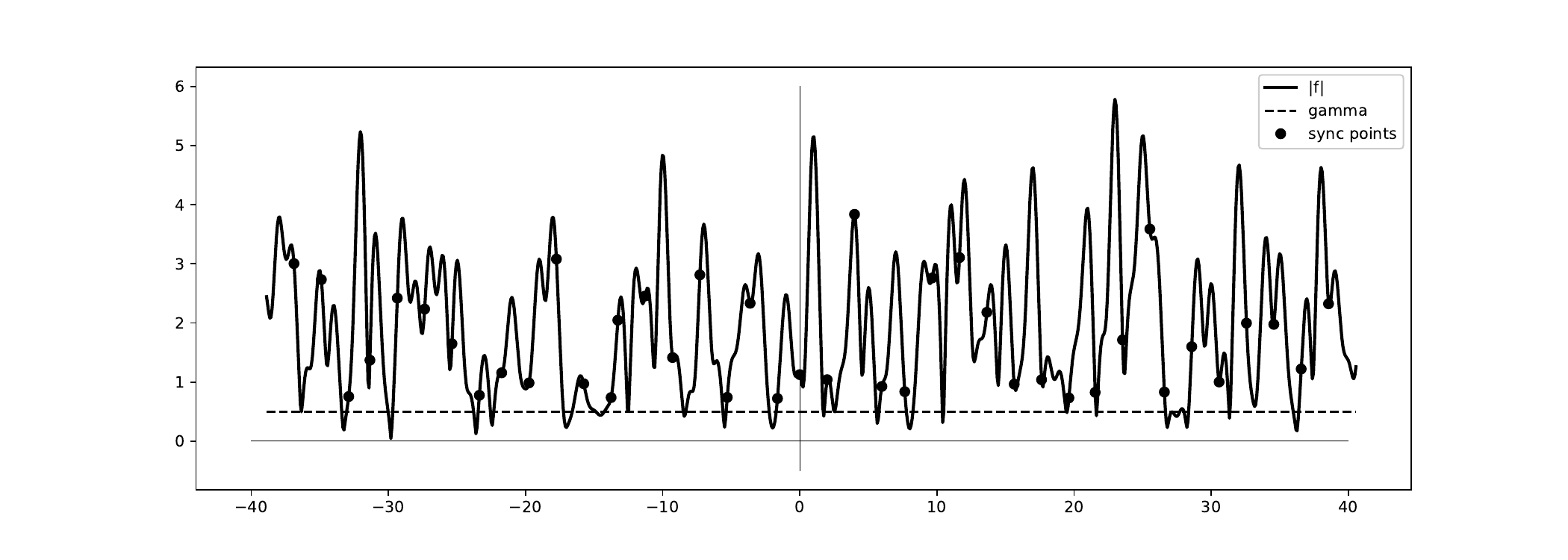}
\caption{Visualization of step 2 of Algorithm 1: plot of the absolute value $|f|$ of a function $f \in V_\beta^\infty(\varphi)$ on the interval $I=[-40,40]$. The black dots represent points $(p_j,F(p_j))$ with $p_{j+1}-p_j \leq r = 2$ and $F(p_j) \geq \gamma = 0.5$.}
\end{figure}

\begin{remark}
The partitioning step (Step 2) in Algorithm 1 can be executed as follows: evaluate the function $F$ on a grid $$-s = t_1 < \cdots t_L = s.$$ Select all points $t_{i_1} < \cdots < t_{i_M}$ such that $F(t_{i_m}) \geq \gamma$ for all $m= 1,\dots,M$. Remove those points $t_{i_m}$ of $t_{i_1} < \cdots t_{i_M}$ which satisfy
$$
t_{i_{m+1}}-t_{i_{m-1}} \leq r
$$
and do this as long as there are no points to remove anymore.
Call the resulting points $u_{i_1} < \cdots < u_{i_K}$. Let $i_k$ be the first index such that $u_{i_{k+1}}-u_{i_k} \leq r$ and $J \in \N$ the largest value such that $u_{i_{k+\ell}}-u_{i_{k+\ell-1}}\leq r$ for all $\ell = 1, \dots, J$. Finally, set $$p_\ell = u_{i_{k+\ell-1}}, \ \ell = 1, \dots, J.$$
\end{remark}

\begin{figure}\label{fig:example}
\centering
\hspace*{-1.9cm}
  \includegraphics[width=16cm]{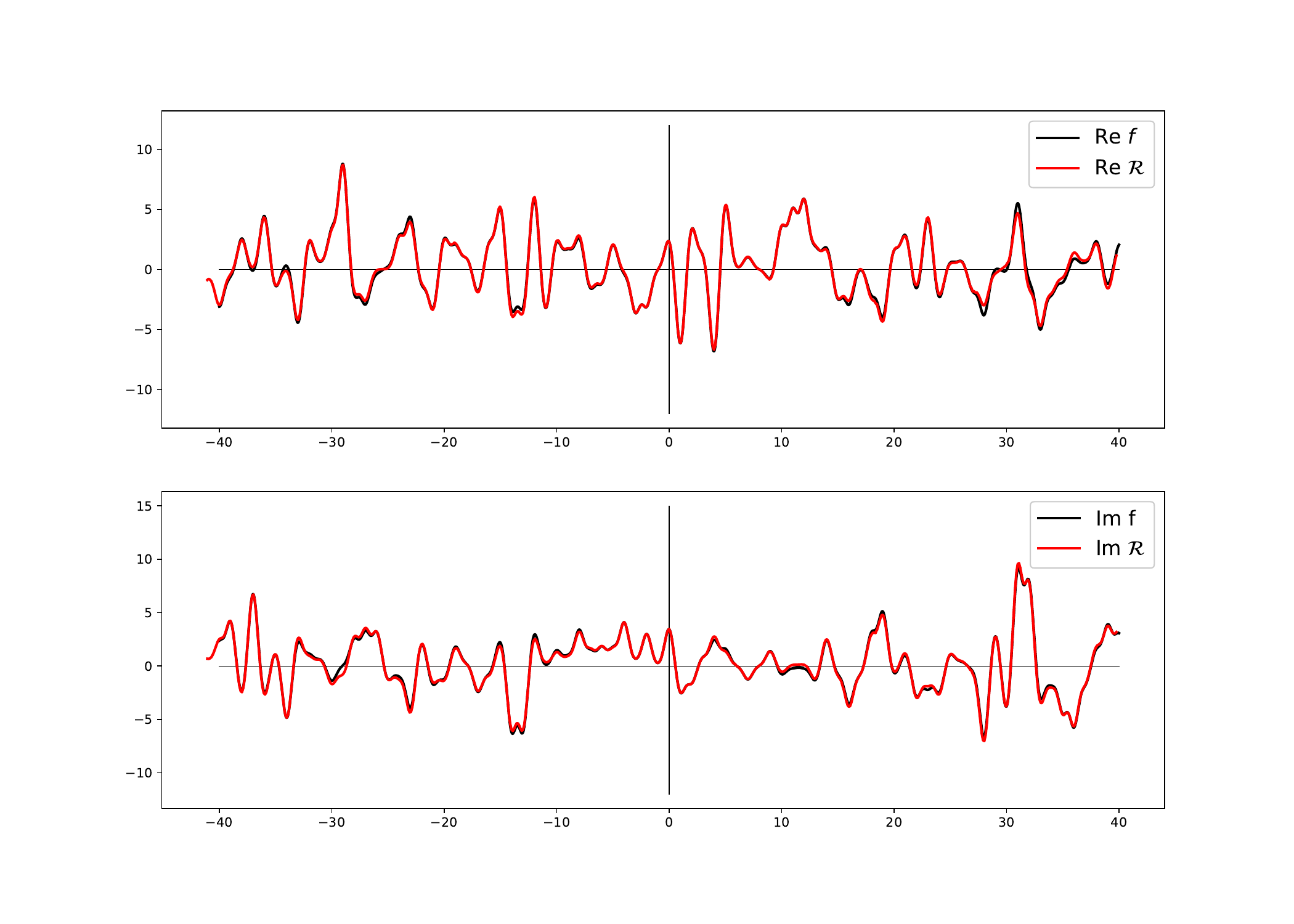}
\caption{Reconstruction of a complex-valued function $f \in V_\beta^\infty(\varphi)$ with $\beta = 1$ and generator $\varphi(t)=e^{-\pi t^2}$ on the interval $[-40,40]$ using Algorithm 1 with $r=1.5, \gamma=1$. The spectrogram samples are taken on a grid $X \subset \R^2$ of size $161 \times 151$, $X= \tfrac{1}{2} \{ -80, \dots, 80 \} \times \tfrac{1}{15}\{ -75, \dots, 75 \}$. To each spectrogram sample $|\mathcal{G}f(X)|^2$, Gaussian noise with mean zero and standard deviation 0.001 is added, i.e. the measurement matrix $\mathfrak{S}$ takes the form $\mathfrak{S} = |\mathcal{G}f(X)|^2+\mathcal{N}(0,\sigma^2), \sigma=0.001$. To visualize the approximation, we multiplied $\mathcal{R}$ with a global phase (the phase of $f$ at $p_1$, where $p_1$ arises from step 2 of Algorithm 1).}
\end{figure}

\begin{remark}
Suppose for simplicity that $\beta,\gamma,r \approx 1$ and that the noise is Gaussian of the form $\mathcal{N}(0,\sigma^2)$. Empirical simulations highlight that if the samples are taken on a grid which satisfies the conditions of Theorem \ref{thm:numerical_sampling} then Algorithm 1 reconstructs functions in $V_\beta^\infty(\varphi)$ with almost no visible error provided that the noise level satisfies $\sigma \leq 0.01$. If the noise level exceeds $0.01$ then the algorithm usually terminates earlier since no point $p \in \R$ with $|f(p)| \geq \gamma$ could be found.
\end{remark}

\begin{figure}
\centering
\hspace*{-1.9cm}
  \includegraphics[width=16cm]{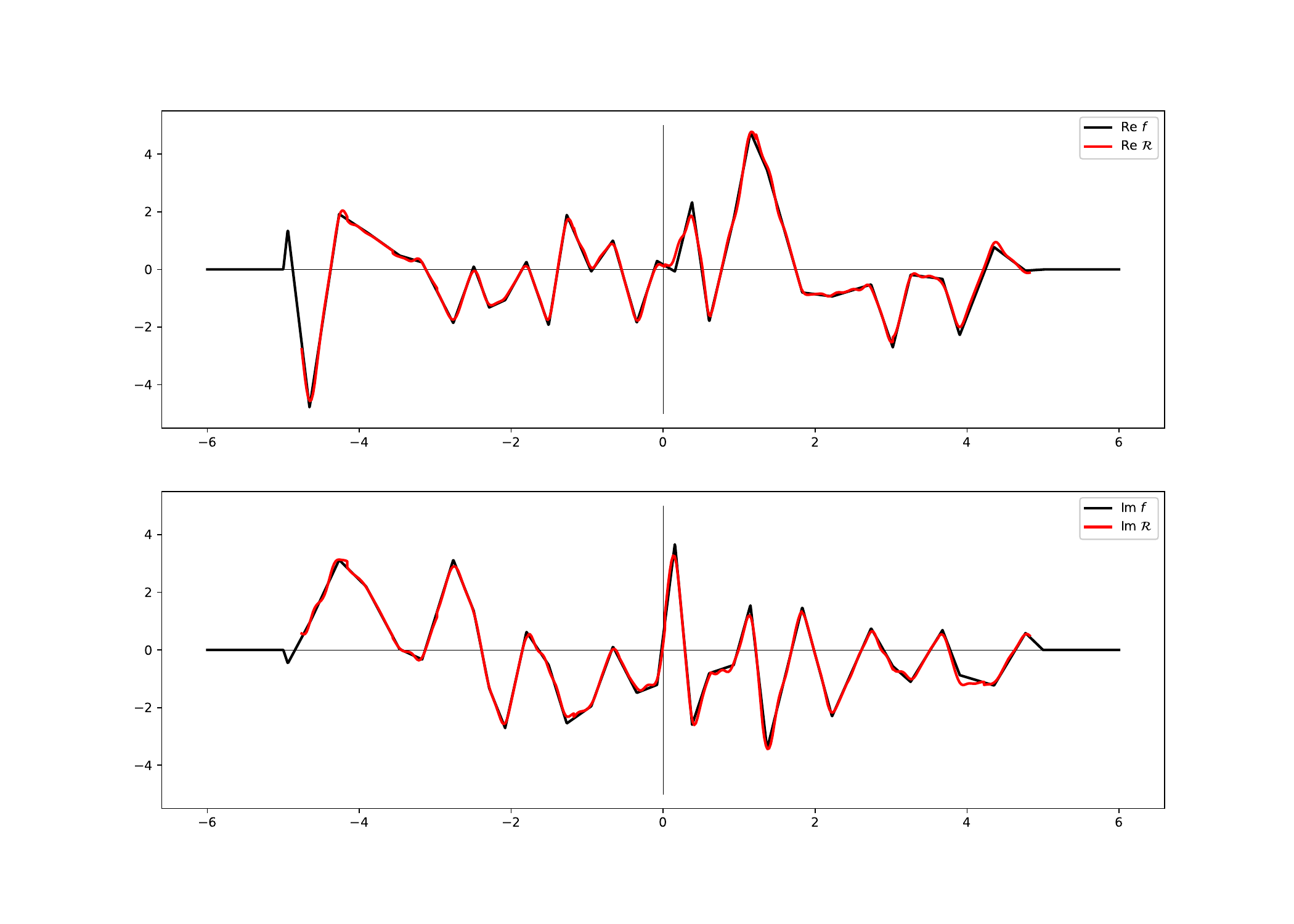}
\caption{Reconstruction of a complex-valued linear spline $f$ using Algorithm 1 with $\sigma=\beta=0.1, \gamma=0.2, r=0.3$ and no noise. The samples are taken on the grid $\frac{\beta}{2} \{ -120, \dots, 120 \} \times c \{ -150, \dots, 150 \}$. Since $f$ has support in $I=[-5,5]$ we choose $c = \frac{1}{2|I|} = \frac{1}{20}$. To visualize the approximation, we multiplied $\mathcal{R}$ with a global phase (the phase of $f$ at $p_1$, where $p_1$ arises from step 2 of Algorithm 1).}
\label{fig:example_pw_linear}
\end{figure}

\subsection{Beyond $V_\beta^\infty(\varphi)$}\label{sec:beyond_V}

\textbf{Compactly supported signals.}
Algorithm 1 was designed for functions in shift-invariant spaces with Gaussian generator. One could inquire about the justification of applying Algorithm 1 to different signal classes such as compactly supported maps. To give a meaningful answer to this question we define for $a,h>0$ the map $\mathcal{P}_\omega : L^4[-a,a] \to V_{\frac{\beta}{2}}^2(\varphi_\omega)$ via
$$
\mathcal{P}_\omega(f) = h \sum_{n\in\Z} \sum_{j\in\Z} |\mathcal{G}f(\tfrac{\beta}{2}n,hj)|^2 e^{2\pi i \omega h j} T_{\frac{\beta}{2}n}\widetilde{\varphi_\omega}.
$$
This is essentially the local reconstructive function defined in Section 4.1 with (noiseless) samples given on the infinite lattice $X=\frac{\beta}{2}\Z \times h \Z$.

\begin{proposition}
Let $a,h >0$ such that $ah \leq \frac{1}{4}$. Then for every $f \in L^4[-a,a]$ and every $\omega \in \R$ the map $\mathcal{P}_\omega(f)$ is the orthogonal projection of $f_\omega$ onto $V_{\frac{\beta}{2}}^2(\varphi_\omega)$.
\end{proposition}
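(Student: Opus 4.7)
The plan is to show that $\mathcal{P}_\omega(f)$ agrees term-by-term with the biorthogonal expansion
\[
P_{V_{\beta/2}^2(\varphi_\omega)}f_\omega = \sum_{n\in\Z}\langle f_\omega,T_{\frac{\beta}{2}n}\varphi_\omega\rangle\,T_{\frac{\beta}{2}n}\widetilde{\varphi_\omega}
\]
that realizes the orthogonal projection of $f_\omega$ onto $V_{\frac{\beta}{2}}^2(\varphi_\omega)$. Note that $f\in L^4[-a,a]\subset L^2(\R)$, so $f_\omega=T_\omega f\cdot\overline{f}\in L^2(\R)$ is a legitimate element to project, and $\{T_{\frac{\beta}{2}n}\widetilde{\varphi_\omega}\}_n$ is the dual Riesz basis of $\{T_{\frac{\beta}{2}n}\varphi_\omega\}_n$ in that space. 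The whole task therefore reduces to matching coefficients, namely to proving the discrete-continuous identity
\[
h\sum_{j\in\Z}|\mathcal{G}f(\tfrac{\beta}{2}n,hj)|^2\,e^{2\pi i\omega hj}=\langle f_\omega,T_{\frac{\beta}{2}n}\varphi_\omega\rangle\qquad(n\in\Z).
\]
By Proposition~\ref{prop:fourier_identity} the right-hand side is the Fourier transform of $|\mathcal{G}f(\tfrac{\beta}{2}n,\cdot)|^2$ evaluated at $\omega$, so the problem is to show that this continuous Fourier integral is recovered exactly from the sampling sum on the left.

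The second step is the compact-support observation that turns this into a Nyquist statement. Revisiting the proof of Proposition~\ref{prop:fourier_identity} one finds $|\mathcal{G}f(x,\cdot)|^2=\mathcal{F}q_x$, where $q_x=p_x*\overline{p_x(-\cdot)}$ and $p_x(u)=f(u+x/2)\overline{\varphi(u-x/2)}$. Because $f$ has support in $[-a,a]$, the function $p_x$ lies in $L^2$ with support in $[-a-x/2,a-x/2]$ (of length $2a$), and consequently $q_x$ is continuous with support contained in the interval $[-2a,2a]$ of length $4a$. In particular $|\mathcal{G}f(\tfrac{\beta}{2}n,\cdot)|^2$ is a band-limited entire function whose inverse Fourier transform $q_{\frac{\beta}{2}n}$ is compactly supported, and the $L^4$ assumption gives the integrability and regularity needed for Poisson summation to apply without technical caveats.

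The third and decisive step is to invoke the Poisson summation formula on the sampled modulated sum. It rewrites the left-hand side above as the periodization
\[
\sum_{k\in\Z}q_{\frac{\beta}{2}n}\!\bigl(\omega-k/h\bigr)
\]
of $q_{\frac{\beta}{2}n}$ with period $1/h$. Under the hypothesis $ah\leq\tfrac14$ one has $1/h\geq 4a$, which is precisely the Nyquist density relative to the bandwidth $[-2a,2a]$ of $\mathcal{F}^{-1}|\mathcal{G}f(\tfrac{\beta}{2}n,\cdot)|^2$; the translates $q_{\frac{\beta}{2}n}(\cdot-k/h)$ for $k\in\Z$ therefore have pairwise disjoint supports up to null sets, and the periodization collapses to the single term $k=0$. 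This recovers the Fourier integral and hence the inner product $\langle f_\omega,T_{\frac{\beta}{2}n}\varphi_\omega\rangle$; substituting back into the definition of $\mathcal{P}_\omega(f)$ yields the biorthogonal expansion of $P_{V_{\beta/2}^2(\varphi_\omega)}f_\omega$.

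The main obstacle I anticipate is the bookkeeping of the Fourier sign in the Poisson step, so that the factor $e^{2\pi i\omega hj}$ in the definition of $\mathcal{P}_\omega(f)$ is seen to pick off exactly the coefficient $\langle f_\omega,T_{\frac{\beta}{2}n}\varphi_\omega\rangle$ (exploiting the Hermitian symmetry $q_x(-t)=\overline{q_x(t)}$ that comes from $|\mathcal{G}f|^2$ being real-valued), together with a careful treatment of the boundary case $ah=\tfrac14$, where the translates touch only on a set of measure zero and aliasing is still avoided thanks to continuity of $q_x$.
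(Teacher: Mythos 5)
Your argument is correct and essentially the same as the paper's: you reduce the claim via the dual-frame (biorthogonal) expansion to the coefficient identity, use Proposition \ref{prop:fourier_identity} together with the compact support of $f$ to see that $|\mathcal{G}f(\tfrac{\beta}{2}n,\cdot)|^2$ is band-limited to $[-2a,2a]$, and read $ah\leq\tfrac14$ as exactly the Nyquist condition. The only cosmetic difference is that you recover the Fourier integral from the samples by Poisson summation and disjointness of the periodized supports, whereas the paper invokes Shannon's sampling theorem and then applies the Fourier transform --- the same fact in two guises.
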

\begin{proof}
General frame theory implies that the orthogonal projection $\mathcal{P}_\omega(g)$ of a map $g \in \lt$ onto $V_{\frac{\beta}{2}}^2(\varphi_\omega)$ is given by
$$
\mathcal{P}_\omega(g) = \sum_n \langle g,T_{\frac{\beta}{2}n}\varphi_\omega \rangle T_{\frac{\beta}{2}n}\widetilde{\varphi_\omega}.
$$
Since $f \in L^4[-a,a]$ we have $f_\omega \in L^2[-a,a]$.
The identity $\ft |\mathcal{G}f(x,\cdot)|^2(\omega) = \langle f_\omega, T_x \varphi_\omega \rangle$ implies that the Fourier transform of $|\mathcal{G}f(x,\cdot)|^2$ vanishes outside of the interval $[-2a,2a]$. 
Hence, the map $t \to |\mathcal{G}f(x,t)|^2$ is band-limited with bandwidth at most $4a$. The assumption on $h$ implies that $h \leq \frac{1}{4a}$ and Shannon's sampling theorem \cite[Theorem 6.13]{Higgins} yields
$$
|\mathcal{G}f(x,t)|^2 = \sum_j |\mathcal{G}f(x,hj)|^2 \mathrm{sinc}(\tfrac{1}{h}t - j), \ \ t \in \R.
$$
Setting $x=\frac{\beta}{2}n$ and applying the Fourier transform in the second argument of the spectrogram results in
$$
\ft (|\mathcal{G}f(\tfrac{\beta}{2}n,\cdot)|^2)(\omega) = \langle f_\omega , T_{\frac{\beta}{2}n} \varphi_\omega \rangle = h \sum_j |\mathcal{G}f(\tfrac{\beta}{2}n,hj)|^2 e^{2\pi i \omega h j}
$$
and this proves the statement.
\end{proof}

This result shows that if we run Algorithm 1 with a compactly supported function rather than a function in $V_\beta^\infty(\varphi)$ then in each step the algorithm approximates the projection of $f_\omega$ onto $V_{\frac{\beta}{2}}^2(\varphi_\omega)$. Heuristically, compactly supported functions that can be well-approximated by a linear combination of Gaussians and which satisfy condition \textbf{(P)} can be reconstructed with this methodology. In Figure \ref{fig:example_pw_linear} we visualize the performance of Algorithm 1 assuming that $f$ is a continuous function with compact support (but is not differentiable). 

\begin{figure}
\centering
\hspace*{-1cm}
   \includegraphics[width=14cm]{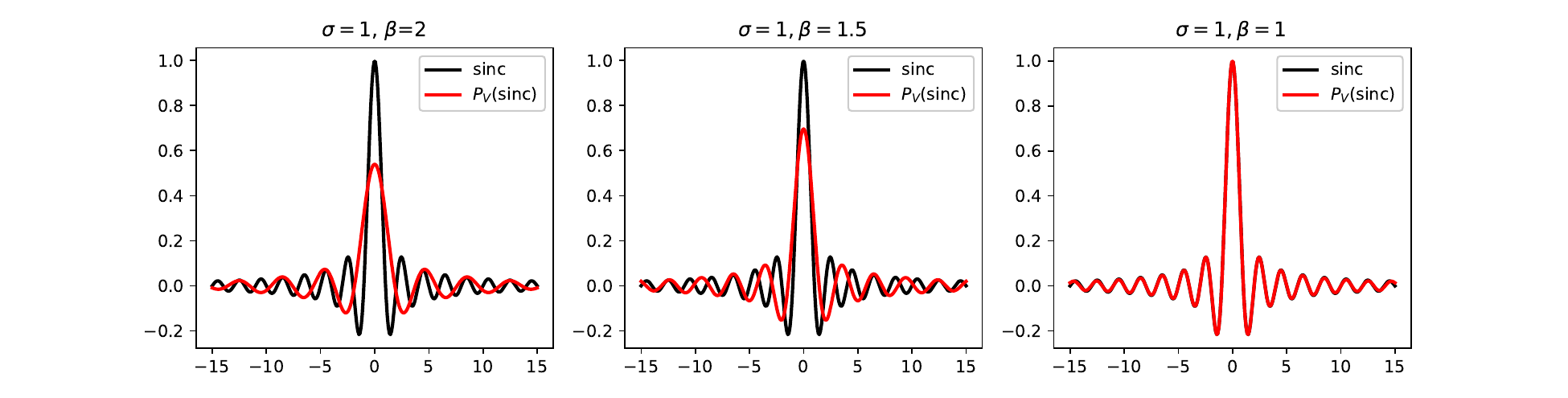}
\caption{Projection $P_V(\text{sinc})$ of a sinc function onto the shift-invariant space $V = V_\beta^2(\varphi^\sigma)$ with $\sigma=1$ and $\beta=1,\frac{3}{2},2$.}
\label{fig:proj}
\end{figure}

\textbf{Band-limited signals.} Suppose now that $f$ has compact support in the frequency domain, i.e. $f$ is a band-limited signal belonging to the Paley-Wiener space
$$
PW_a^2(\R) = \{ f \in L^2(\R) : \supp(\hat f) \subseteq [-a/2, a/2] \}, \ a>0.
$$
Empirically, Algorithm 1 approximates $f \in PW_a^2(\R)$ from noisy samples as soon as the standard deviation $\sigma$ of the Gaussian window $\varphi$ lies below a constant depending on the bandwidth $a$ of $f$. This follows from two observations. Firstly, $f$ is generated by a sinc function (Shannon's sampling theorem) and a sinc can be well-approximated by a linear combination of equally spaced Gaussians with standard deviation $\sigma \approx \frac{1}{a}$, see Figure \ref{fig:proj} for a visualization. Secondly, if $f$ has bandwidth $a$ then $f_\omega$ has bandwidth $2a$ for every $\omega \in \R$.
Hence, $PW_a^2(\R)$ exhibits a similar invariance of the tensor product operation as in the Gaussian shift-invariant setting (Proposition \ref{prop:invariance_tensor_product}).
Since $\varphi_\omega$ is (up to a constant) a Gaussian with standard deviation $\frac{\sigma}{\sqrt{2}}$ for every $\omega \in \R$, we observe that under the assumption $\sigma \approx \frac{1}{a}$ every $f_\omega$ can be well-approximated by projecting $f_\omega$ onto the Gaussian shift-invariant space $V_{\frac{\beta}{2}}^2(\varphi_\omega)$.
Clearly, general frame theory shows that the projection of $f_\omega$ onto $V_{\frac{\beta}{2}}^2(\varphi_\omega)$ can be derived as soon as the inner products $\langle f_\omega, T_x\varphi_\omega \rangle$ are available. Since
$$
\langle f_\omega, T_x\varphi_\omega \rangle = \int_\R |\mathcal{G}f(x,t)|^2e^{-2\pi i \omega t} \, dt
$$
(Proposition \ref{prop:fourier_identity}), the inner products can be recovered up to a small error using a suitable quadrature rule. 
In case of a band-limited signal $f \in PW_a^2(\R)$ we further observe that
$$
|\mathcal{G}f(x,t)| = |\ft(fT_x\varphi)(t)| \leq \left | \int_\R \hat f(t-s) \hat \varphi(s) \, ds \right |
$$
$$
\leq a \| \hat f \|_{L^2[-\frac{a}{2},\frac{a}{2}]} \| \hat \varphi \|_{L^\infty[-\frac{a}{2}+t, \frac{a}{2}+t]}
$$
where we used the convolution theorem for the Fourier transform. Thus, the map $t \mapsto |\mathcal{G}f(x,t)|$ satisfies a Gaussian decay as in Theorem \ref{lma:spectrogram_factorization}.
Figure \ref{fig:example_bandlimited} depicts the output of Algorithm 1 for an input function $f \in PW_a^2(\R)$ in a Gaussian noise regime.

\begin{figure}
\centering
\hspace*{-1.9cm}
  \includegraphics[width=16cm]{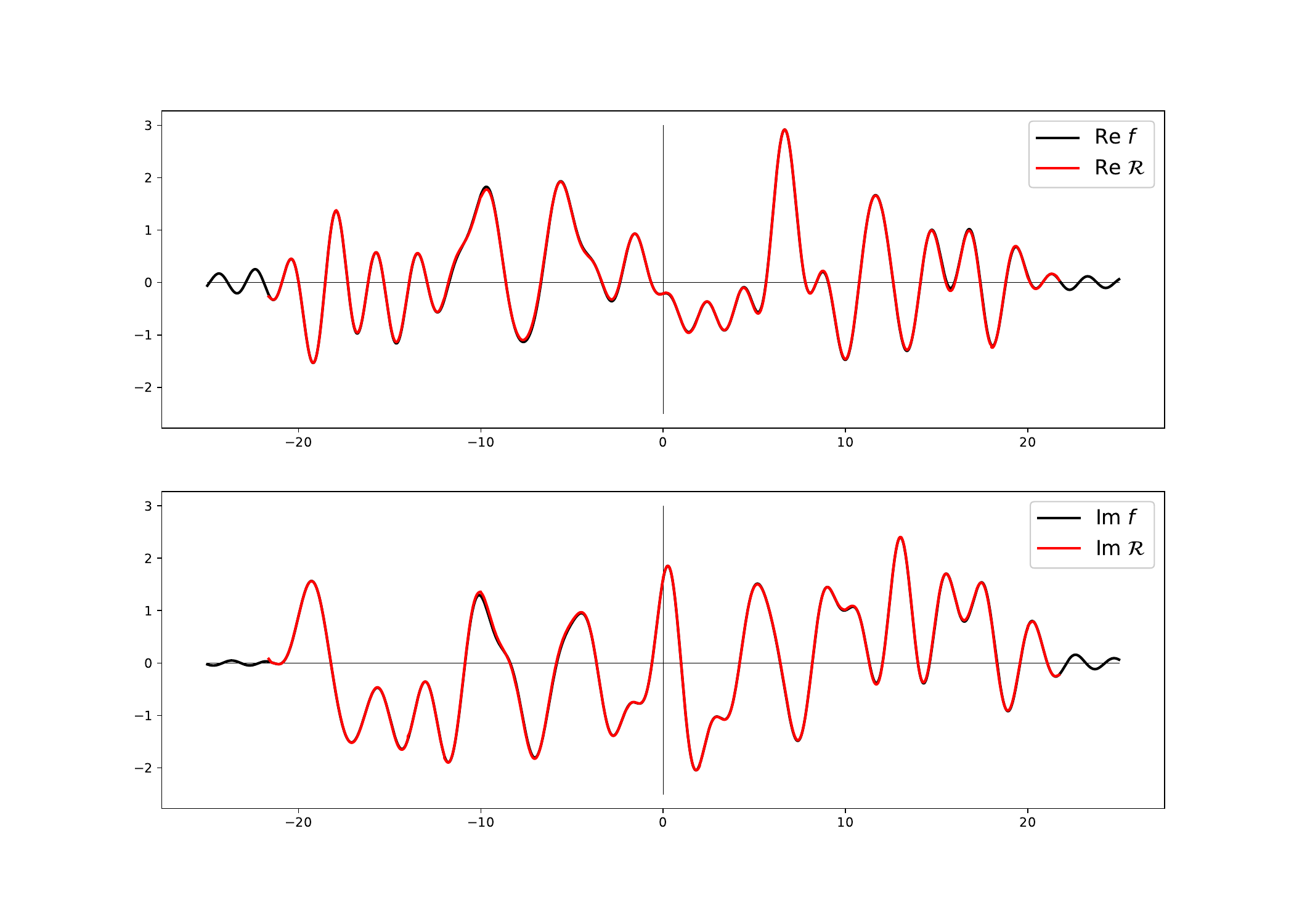}
\caption{Reconstructing a piece of a band-limited signal using Algorithm 1 with $\beta=0.5, r=1, \gamma=0.05$ and a Gaussian window with standard deviation $\frac{1}{4}$. The spectrogram samples are taken on the grid $X=0.04\{50,\dots,50\} \times \frac{\beta}{2} \{ -120,\dots,120\}$. To each spectrogram sample Gaussian noise $\mathcal{N}(0,\sigma^2)$ is added with $\sigma=0.0001$. To visualize the approximation, we multiplied $\mathcal{R}$ with a global phase (the phase of $f$ at $p_1$, where $p_1$ arises from step 2 of Algorithm 1).}
\label{fig:example_bandlimited}
\end{figure}

The foregoing discussion highlights that a combination of approximation properties of Gaussian shift-invariant spaces, numerical integration theory and the abstract theory presented in Section \ref{sec:reconstructive_function} point towards new approximation results from phaseless samples for function spaces beyond $V_\beta^\infty(\varphi)$. We leave this research direction open for future work.

\section{Appendix}

\subsection{Proof of Theorem \ref{thm:dual_generator}}\label{appendix:A}

\begin{proof}
The fact that $\theta$ is an element of $V_\beta^2(\phi)$ follows from \cite[Lemma 9.3.2]{christensenBook}. If $S$ is the frame operator of $(T_{\beta n}\phi)_n$ then
$$
\ft S \theta = \left ( \sum_{k \in \Z} \langle \hat \theta, M_{-k\beta}\hat \phi \rangle M_{-k\beta} \right ) \hat \phi.
$$
The inner products $\langle \hat \theta, M_{-k\beta}\hat \phi \rangle$ can be written as
\begin{equation*}
    \begin{split}
         \langle \hat \theta, M_{-k\beta}\hat \phi \rangle &= \int_\R \hat \theta(t)\overline{\hat \phi (t)} e^{2\pi i k\beta t} \, dt 
        = \sum_{n \in \Z} \int_0^{1/\beta} \hat \theta(t+\tfrac{n}{\beta})\overline{\hat \phi(t+\tfrac{n}{\beta})}e^{2\pi i k\beta t} \, dt \\
        &= \int_0^{1/\beta} \sum_{n \in \Z} \frac{\hat \phi(t+\frac{n}{\beta})}{\Psi_\beta(t+\frac{n}{\beta})} \overline{\hat \phi(t+\tfrac{n}{\beta})} \bone_{D \cap [0,\frac{1}{\beta}]}(t) e^{2\pi i k\beta t} \, dt \\ 
        &= \int_0^{1/\beta} \bone_{D \cap [0,\frac{1}{\beta}]}(t) e^{2\pi i k\beta t} \, dt
        = \frac{1}{\beta} \widehat{\bone_{D \cap [0,\frac{1}{\beta}]}}(k)
    \end{split}
\end{equation*}
where $\widehat{\bone_{D \cap [0,\frac{1}{\beta}]}}(k)$ is the $k$-th Fourier coefficient of $\bone_{D \cap [0,\frac{1}{\beta}]}$. By periodicity we have
$$
\sum_k \langle \hat \theta, M_{-k\beta}\hat \phi \rangle M_{-k\beta} = \bone_D.
$$
If $\hat \phi(t) \neq 0$ then $\Psi_\beta(t) \neq 0$ and therefore we have $\bone_D\hat \phi = \hat \phi$ which implies that $\ft S \theta = \frac{1}{\beta} \hat \phi$.
\end{proof}

\subsection{Proof of Proposition \ref{prop:V_2_V_inf}}\label{appendix:B}

\begin{proof}
Suppose that $f \in V_\beta^\infty(\varphi)$ has defining sequence $c = (c_n)_n \in \ell^\infty(\Z), f(t) = \sum_n c_n T_{\beta n} \varphi(t)$. Let $I\subset \R$ be a compact interval such that $I \subset [-s,s], s>0$.
Let $\varepsilon>0$. Define for $M \in \N$ the map $g_M$ by $g_M \coloneqq \sum_{m=-M}^M c_m T_{\beta m}\varphi$. Observe that the dual generator $\tilde \varphi$ converges to zero exponentially and since $c \in \ell^\infty(\Z)$ there exist a $C>0$ such that
$$
\left | \langle g_M, T_{\beta k} \varphi \rangle \right | \leq C
$$
for every $M \in \N$ and every $k \in \Z$. It follows that there exists an $L=L(s) \in \N$ such that
$$
\left | \sum_{\substack{k \in \Z \\ k \notin \{ -L,\dots,L \} }} \langle g_M, T_{\beta k} \rangle T_{\beta k} \tilde \varphi(t)  \right | \leq \varepsilon
$$
for every $t \in [-s,s]$ and every $M \in \N$. Now choose $M$ so that
$$
\left | \sum_{\substack{m \in \Z \\ m \notin \{ -M,\dots,M \} }} c_m T_{\beta m} \varphi(t) \right | \leq \varepsilon
$$
for every $t \in [-s,s]$. Since $g_M \in V_\beta^2(\varphi)$ we have the estimate
\begin{equation*}
    \begin{split}
        \left | f(t) - \sum_{n=-L}^L \langle f , T_{\beta n} \varphi \rangle T_{\beta n} \tilde \varphi(t) \right | 
         &\leq \varepsilon + \left | g_M(t) + \sum_{n=-L}^L \langle f , T_{\beta n} \varphi \rangle T_{\beta n} \tilde \varphi(t) \right | \\
        & \leq 2\varepsilon + \left |\sum_{n=-L}^L \langle g_M-f , T_{\beta n} \varphi \rangle T_{\beta n} \tilde \varphi(t) \right |.
    \end{split}
\end{equation*}
By dominated convergence, the right-hand side converges to $2\varepsilon$ as $M\to \infty$ uniformly in $t \in [-s,s]$.
\end{proof}

\subsection{Proof of Lemma \ref{lma:expl_decay}}\label{appendix:C}

\begin{proof}

\textbf{Step 1: Upper bounding $|\ift \Lambda|$.}
Let $c=\varphi(\frac{\beta}{2}) = e^{-\frac{\beta^2}{8\sigma^2}}$ and let
\begin{equation*}
    \begin{split}
        a_n & = (-1)^n \frac{2}{\xi} \sum_{m=0}^\infty (-1)^m c^{(m+\frac{1}{2})(2|n|+m+\frac{1}{2})},\\
        \xi & = \xi(c) = \sum_{n \in \mathbb Z} (-1)^n (2n+1)c^{(n+\frac 1 2)^2}
    \end{split}
\end{equation*}
be the Fourier coefficients of the reciprocal theta function as given in equation \eqref{theta_identity_2} and \eqref{theta_identity_3}. Then
$$
\ift \Lambda(t) = \sum_{n \in \Z} a_n T_{\frac{\beta}{2}n} \ift (s \mapsto e^{-\pi^2\sigma^2s^2})(t) = \frac{1}{\sigma\sqrt{\pi}} \sum_{n \in \Z} a_n e^{-\frac{(t+\frac{\beta}{2}n)^2}{\sigma^2}}.
$$
The modulus of the coefficients $|a_n|$ is upper bounded by
\begin{equation*}
    \begin{split}
        |a_n| & \leq \frac{2}{|\xi|} c^{|n|} \sum_{m=0}^\infty c^{(m+\frac{1}{2})^2} \leq \frac{2}{|\xi|} c^{|n|} \left ( c^{\frac{1}{4}} + \int_0^\infty c^{(s+\frac{1}{2})^2} \, ds \right ) \\
        & \leq \frac{2}{|\xi|} c^{|n|} \left ( c^{\frac{1}{4}} + \int_0^\infty c^{s^2} \, ds \right ) = \frac{2}{|\xi|} c^{|n|} \left ( c^{\frac{1}{4}} +  \frac{\sigma}{\beta} \sqrt{2\pi} \right ).
    \end{split}
\end{equation*}
It follows that
$$
|\ift \Lambda(t)| \leq \frac{2(c^{\frac{1}{4}} + \frac{\sigma}{\beta}\sqrt{2\pi})}{\sqrt{\pi}\sigma|\xi|} \sum_{n \in \Z} c^{|n|} e^{-\frac{(t+\frac{\beta}{2}n)^2}{\sigma^2}}
$$
where the right-hand side is even in $t$. In addition, the series on the right of the previous inequality satisfies
$$
\sum_{n \in \Z} c^{|n|} e^{-\frac{(t+\frac{\beta}{2}n)^2}{\sigma^2}} =\sum_{n \in \Z} c^{|n|} e^{-\frac{(|t|-\frac{\beta}{2}n)^2}{\sigma^2}} \leq \underbrace{\sum_{n = \ceil{\frac{1}{\beta}|t|}}^\infty c^{|n|}}_{\coloneqq A(t)} + \underbrace{\sum_{n=-\infty}^{\floor{\frac{1}{\beta}|t|}} e^{-\frac{(\frac{\beta}{2}n-|t|)^2}{\sigma^2}}}_{\coloneqq B(t)}.
$$
If $t \neq 0$ then the term $A(t)$ can be upper bounded via
$$
A(t) \leq \int_{\ceil{\frac{1}{2}|t|}-1}^\infty c^s \, ds = \frac{8\sigma^2}{\beta^2} e^{\frac{\beta^2}{8\sigma^2}} e^{-\frac{\beta^2}{16\sigma^2}|t|} \leq 2e^2e^{-\frac{1}{4}|t|}
$$
where in the rightmost inequality we used the assumption that $\frac{\beta}{4} \leq \sigma \leq \frac{\beta}{2} \leq 1$.
By making use of the elementary bound $e^{-bt^2} \leq e^\frac{b}{4}e^{-b|t|}$ ($t,b \in \R$) and the condition on $\sigma$ and $\beta$ we deduce the chain of inequalities
\begin{equation*}
    \begin{split}
        B(t) & \leq \sum_{n=-\infty}^{\floor{\frac{1}{\beta}|t|}-1} e^{-\frac{(\frac{\beta}{2}n-|t|)^2}{\sigma^2}} + e^{-\frac{(\frac{\beta}{2} \floor{\frac{1}{\beta}|t|} -|t|)^2}{\sigma^2}} 
         \leq \int_{-\infty}^{\floor{\frac{1}{\beta}|t|}} e^{-\frac{(\frac{\beta}{2}s-|t|)^2}{\sigma^2}} + e^{-\frac{t^2}{4\sigma^2}} \\
        & \leq \int_0^\infty e^{-\frac{(\frac{\beta}{2}s+\frac{1}{2}|t|)^2}{\sigma^2}} + e^{\frac{1}{16}}e^{-\frac{1}{4}|t|} 
         \leq \int_0^\infty e^{-\frac{1}{4}t^2} e^{-|t|s} e^{-s^2} \, ds + e^{\frac{1}{16}}e^{-\frac{1}{4}|t|} \\
        & \leq e^{\frac{1}{16}} e^{-\frac{1}{4}|t|} \int_0^\infty e^{-s^2} \, ds + e^{\frac{1}{16}}e^{-\frac{1}{4}|t|} 
         = \left (\frac{\sqrt \pi}{2} + 1 \right ) e^{\frac{1}{16}} e^{-\frac{1}{4}|t|}.
    \end{split}
\end{equation*}
Consequently, $|\ift \Lambda|$ satisfies the pointwise exponential bound
$$
|\ift \Lambda(t)| \leq \underbrace{ \frac{2(c^{\frac{1}{4}} + \frac{\sigma}{\beta}\sqrt{2\pi})}{\sqrt{\pi}} \left ( 2e^2+ \left ( \frac{\sqrt \pi}{2} + 1 \right)e^{\frac{1}{16}} \right) }_{\coloneqq \Xi} \frac{1}{\sigma |\xi|} e^{-\frac{1}{4}|t|}.
$$
If $\frac{\beta}{4} \leq \sigma \leq \frac{\beta}{2} \leq 1$ then 
$e^{-2} \leq c = \varphi(\frac{\beta}{2}) \leq e^{-\frac{1}{2}}$ and $\frac{\sigma}{\beta}\leq \frac{1}{2}$. Evaluating $\Xi$ results in the upper bound $\Xi \leq 41$.

\textbf{Step 2: Lower bounding $\xi(c)$.} 
For every $0<c<1$ we have
$$
\xi(c) = 2 \left ( \sum_{n=0}^\infty (4n+1)c^{\left (\frac{4n+1}{2}\right )^2} - \sum_{n=1}^\infty (4n-1)c^{\left (\frac{4n-1}{2} \right )^2} \right).
$$
Let $c \in J \coloneqq [e^{-2},e^{-\frac{1}{2}}]$ and define $f : \N \times J \to \R$ by $$f(n,c) = (4n-1)c^{\left (\frac{4n-1}{2} \right )^2}.$$ The map $f$ has the property that $n \mapsto f(n,c)$ is decreasing for every $c \in J$ and $c \mapsto f(n,c)$ is increasing for every $n \in \N$. Thus,
$$
\sum_{n=3}^\infty f(n,c) \leq \sum_{n=3}^\infty f(n,e^{-\frac{1}{2}}) \leq \int_2^\infty f(t,e^{-\frac{1}{2}}) \, dt = e^{-\frac{49}{8}}.
$$
It follows that for $c \in J$ we have
\begin{equation}
    \begin{split}
        \xi(c) & \geq 2 \left ( \sum_{n=0}^2 (4n+1)c^{\left (\frac{4n+1}{2}\right )^2} - f(1,c) - f(2,c) - \sum_{n=3}^\infty f(n,c) \right) \\
        & \geq 2 \left ( \sum_{n=0}^2 (4n+1)c^{\left (\frac{4n+1}{2}\right )^2} - f(1,c) - f(2,c) - e^{-\frac{49}{8}} \right) \\
        & = -2 e^{-\frac{49}{8}} + 2x -6x^9 +10x^{25} - 14 x^{49} + 18 x^{81} \eqqcolon p(x)
    \end{split}
\end{equation}
with $x = c^{\frac{1}{4}}$. For every $x \in [(e^{-2})^{\frac{1}{4}},(e^{-\frac{1}{2}})^{\frac{1}{4}}]$ the polynomial $p$ satisfies the lower bound $p(x) \geq \frac{1}{5}$. It follows that $\xi(c) \geq \frac{1}{5}$ for every $c \in J$.

\textbf{Step 3: Combining Step 1 and Step 2.}
The upper bound $\Xi \leq 41$ and the lower bound $\xi(c) \geq \frac{1}{5}$ for $c \in J$ implies that
\begin{equation}\label{eq:205_bound}
    |\ift \Lambda(t)| \leq \frac{205}{\sigma} e^{-\frac{1}{4}|t|}.
\end{equation}
Inequality \eqref{eq:205_bound} was derived under the condition that $t \neq 0$. By continuity it also holds for $t=0$. This concludes the proof of the Lemma.
\end{proof}

\subsection{Proof of Theorem \ref{lma:spectrogram_factorization}}\label{appendix:D}

\begin{proof}
Combining the identity
$
\mathcal{G}f(x,t) = \sum_k c_k e^{-2\pi i \beta k t} \mathcal{G}\varphi(x-\beta k, t)
$
with the identity
$$
\mathcal{G}\varphi(x,t) = \sigma \sqrt{\pi} e^{-\frac{x^2}{4\sigma^2}}e^{-\pi i x t} e^{-\pi^2\sigma^2t^2}
$$
yields
$$
|\mathcal{G}f(x,t)|^2 = \sum_k \sum_j \pi \sigma^2 c_k \overline{c_j} e^{2\pi i t(\tfrac{\beta}{2}(j-k))} e^{-2\pi^2\sigma^2t^2} e^{-\frac{(x-\beta k)^2}{4\sigma^2}} e^{-\frac{(x-\beta j)^2}{4\sigma^2}}.
$$
The relation
$$
e^{-\frac{(x-\beta k)^2}{4\sigma^2}} e^{-\frac{(x-\beta j)^2}{4\sigma^2}} = e^{-\frac{(x-\frac{\beta}{2}(k+j))^2}{2\sigma^2}} e^{-\frac{\beta^2(k-j)^2}{8\sigma^2}}
$$
shows that
$$
|\mathcal{G}f(x,t)|^2 = \pi \sigma^2 \sum_k\sum_j c_k \overline{c_j} a(\sigma,\beta,k,j) M_{\frac{\beta}{2}(j-k)} \varphi^{\sigma'}(t) T_{\frac{\beta}{2}(k+j)} \varphi^\sigma(x)
$$
where $a(\sigma,\beta,k,j) \coloneqq e^{-\frac{\beta^2(k-j)^2}{8\sigma^2}}$. The product formula \eqref{eq:product_formula} follows at once if we set
$$
b_\ell(x) = \sum_{j,k \in \Z, j-k=\ell} c_k \overline{c_j} e^{-\frac{\beta^2\ell^2}{8\sigma^2}} T_{\frac{\beta}{2}(k+j)} \varphi^\sigma(x)
$$
For every $\ell \in \Z$ and every $x \in \R$ we have the estimate
\begin{equation*}
\begin{split}
|b_\ell(x)| & \leq \| c \|_\infty^2 e^{-\frac{\beta^2\ell^2}{8\sigma^2}} \sum_j T_{\frac{\beta}{2}(2j-\ell)} \varphi^\sigma(x)  = \| c \|_\infty^2 e^{-\frac{\beta^2\ell^2}{8\sigma^2}} \sum_j T_{\beta j} \varphi^\sigma(x + \tfrac{\beta}{2}\ell) \\
& \leq \| c \|_\infty^2 e^{-\frac{\beta^2\ell^2}{8\sigma^2}} \sum_j T_{\beta j} \varphi^\sigma(0)  \leq \| c \|_\infty^2 e^{-\frac{\beta^2\ell^2}{8\sigma^2}} \left ( 1+\frac{1}{\beta} \int_\R \varphi^\sigma(t) \, dt \right ) \\
& = \| c \|_\infty^2 e^{-\frac{\beta^2\ell^2}{8\sigma^2}} \left ( 1+\frac{\sigma}{\beta}\sqrt{2\pi} \right )
\end{split}
\end{equation*}
where we used (similar to the proof of Corollary \ref{cor:tensor_stability}) that for real $x$ the theta function $x \mapsto \vartheta(x, c)$ is maximal at integers $\pi \Z$ \cite[p. 178]{JANSSEN1996165}.
In view of this Gaussian bound, the trigonometric series $S_x(z)$ is well-defined for $z \in \C$ and converges uniformly on compact subsets of $\C$. Hence, $z \mapsto |\mathcal{G}f(x,z)|^2$ extends from $\R$ to an entire function on $\C$.
\end{proof}

\subsection{Proof of Lemma \ref{lma:error_bound}}\label{appendix:E}

\begin{proof}
\textbf{Step 1: Numerical integration over $\R$.}
Suppose that $f \in V_\beta^\infty(\varphi)$ has defining sequence $c \in \ell^\infty(\Z)$. Let $S_x(t)$ be the trigonometric series as given in Theorem \ref{lma:spectrogram_factorization} so that
$$
|\mathcal{G}f(x,t)|^2 = \pi \sigma^2 \varphi^{\sigma'}(t)S_x(t).
$$
For $t,y,x,\omega \in \R$ let $W$ be defined by
$$
W(t+iy) \coloneqq |\mathcal{G}f(x,t+iy)|^2e^{-2\pi i \omega(t+iy)}.
$$
Further, let $A \coloneqq S_x(t+iy), B \coloneqq \varphi^{\sigma'}(t+iy)$ and $C \coloneqq e^{-2\pi i \omega (t+iy)}$. Then 
$$
W(t+iy) = \pi \sigma^2 ABC.
$$
We bound the modulus of $A,B$ and $C$ separately. Using the bound on $|b_\ell(x)|$ as given in Theorem \ref{lma:spectrogram_factorization} we obtain
\begin{equation*}
    \begin{split}
        |A| & = \left | \sum_\ell b_\ell(x) e^{\pi i \beta \ell(t+iy)} \right | \leq \sum_\ell |b_\ell(x)| e^{-\pi \beta \ell y} \\
         & \leq \| c \|_\infty^2 \left ( 1+\frac{\sigma}{\beta}\sqrt{2\pi} \right ) \sum_\ell e^{-\frac{1}{8}(\frac{\beta\ell}{\sigma})^2} e^{-\pi \beta \ell y} \\
         & = \| c \|_\infty^2 \left ( 1+\frac{\sigma}{\beta}\sqrt{2\pi} \right ) e^{2\sigma^2\pi^2y^2} \sum_\ell e^{-\frac{\beta^2}{8\sigma^2} \left ( \ell + \frac{4\sigma^2 y \pi}{\beta} \right )^2} \\
         & \leq \| c \|_\infty^2 \left ( 1+\frac{\sigma}{\beta}\sqrt{2\pi} \right ) e^{2\sigma^2\pi^2y^2} \sum_\ell e^{-\frac{\beta^2}{8\sigma^2} \ell^2} \\
         & \leq \| c \|_\infty^2 \left ( 1+\frac{\sigma}{\beta}\sqrt{2\pi} \right ) e^{2\sigma^2\pi^2y^2} \left ( 1+ \frac{1}{\beta}\int_\R e^{-\frac{s^2}{8\sigma^2}} \, ds \right ) \\
         & \leq \| c \|_\infty^2 \left ( 1+\frac{\sigma}{\beta}2\sqrt{2\pi} \right )^2 e^{2\sigma^2\pi^2y^2} 
    \end{split}
\end{equation*}
In addition, we have $|B| = \varphi^{\sigma'}(t)e^{\frac{y^2}{2(\sigma')^2}}$ and $|C| = e^{2\pi \omega y}$. It follows that
$$
|W(t+iy)| \leq \pi \sigma^2 \| c \|_\infty^2 \left ( 1+\frac{\sigma}{\beta}2\sqrt{2\pi} \right )^2 e^{2\sigma^2\pi^2y^2} e^{\frac{y^2}{2(\sigma')^2}} e^{2\pi \omega y} \varphi^{\sigma'}(t).
$$
Consider the strip $U = U_a = \{ z \in \C : |\mathrm{Im}(z)| <a \}$ with $a=\sigma'=\frac{1}{2\pi\sigma}$ being the conjugate standard deviation. It follows from the previous estimates that $W(z) \to 0$ uniformly as $|z|\to \infty$ in the strip $U$ and by Theorem \ref{lma:spectrogram_factorization} the map $W$ is analytic in $U$. 
Integration over the Gaussian $\varphi^{\sigma'}$ shows further that for every $t,y \in \R$
$$
\int_\R |W(t+iy)| \, dt \leq \sqrt{\frac{\pi}{2}} \sigma  \| c \|_\infty^2 \left ( 1+\frac{\sigma}{\beta}2\sqrt{2\pi} \right )^2 e^{2\sigma^2\pi^2y^2} e^{\frac{y^2}{2(\sigma')^2}} e^{2\pi \omega y}.
$$
From this we see that inside the strip $U$ the estimate 
$$
\sup_{t+iy \in U} \int_\R |W(t+iy)| \, dt \leq \sqrt{\frac{\pi}{2}} \sigma  \| c \|_\infty^2 \left ( 1+\frac{\sigma}{\beta}2\sqrt{2\pi} \right )^2 e^{\frac{|\omega|}{\sigma}+1}
$$
holds and the theorem of Trefethen and Weidemann, Theorem \ref{thm:trefethen_w}, implies that
$$
\left |I_h^\infty(W) - \int_\R W \right | \leq \sqrt{2\pi} \sigma  \| c \|_\infty^2 \left ( 1+\frac{\sigma}{\beta}2\sqrt{2\pi} \right )^2 e^{\frac{|\omega|}{\sigma}+1} \left ( \frac{1}{e^{\frac{1}{\sigma h}}-1} \right ).
$$

\textbf{Step 2: Estimating the cut-off error.}
For the integrand $W$ as defined above and an $H \in \N$ we estimate the cut-off error $| I_h^\infty(W) - I_h^H(W)|$. For every $\omega \in \R$ we have
\begin{equation*}
\begin{split}
| I_h^\infty(W) - I_h^H(W)| & \leq 2h \pi \sigma^2  \sum_\ell |b_\ell(x)|  \sum_{k=H+1}^\infty \varphi^{\sigma'}(hk) \\
& \leq 2h\pi\sigma^2 \| c \|_\infty^2 \left ( 1 + \frac{\sigma}{\beta}2\sqrt{2\pi} \right)^2 \sum_{k=H+1} \varphi^{\sigma'}(hk) \\
& \leq 2 \pi \sigma^2 \| c \|_\infty^2 \left ( 1 + \frac{\sigma}{\beta}2\sqrt{2\pi} \right)^2 \int_{Hh}^\infty e^{-2(\pi\sigma t)^2} \, dt.
\end{split}
\end{equation*}
Using a classical Gaussian tail bound of the form
$$
\int_q^\infty e^{-p t^2} \, dt \leq \sqrt{\frac{\pi}{4p}} e^{-pq^2}, \ \ \ p,q>0
$$
with $p=2\pi^2\sigma^2$ and $q=Hh$ results in
$$
| I_h^\infty(W) - I_h^H(W)| \leq \sqrt{\frac{\pi}{2}} \sigma \| c \|_\infty^2 \left ( 1 + \frac{\sigma}{\beta}2\sqrt{2\pi} \right)^2 e^{-2(\pi\sigma H h)^2}.
$$
and the triangle inequality implies that
$$
\left | \int_\R W - I_h^H(W) \right | \leq \sqrt{\frac{\pi}{2}} \sigma \left ( 1 + \frac{\sigma}{\beta}2\sqrt{2\pi} \right )^2\| c \|_\infty^2 \left ( \frac{2e^{\frac{|\omega|}{\sigma} + 1}}{e^{\frac{1}{\sigma h}}-1} + e^{-2(\pi Hh\sigma)^2} \right ).
$$

\textbf{Step 3. Estimating the distance $|f_\omega(p+\omega)-S(\omega)|$.}
Define for $H,N \in \N, h>0$ and $\omega \in \R$ the values $f(n,\omega)$ and $g(n,\omega)$ via
$$
f(n,\omega) \coloneqq \int_\R |\mathcal{G}f(\tfrac{\beta}{2}n,t)|^2e^{-2\pi i \omega t} \, dt
$$
and
$$
g(n,\omega) \coloneqq h \sum_{k=-H}^H \mathfrak{S}(n,k)e^{2\pi i \omega h k}.
$$
Using the biorthogonal expansion of $f_\omega$ and invoking the definition of the sampling set $\mathfrak{S}$ gives the upper bound
\begin{equation*}
    \begin{split}
        & |f_\omega(p+\omega) - S(\omega)| \\
        & \leq \underbrace{\left | \sum_{n \in ([-N,N]\cap\Z)^c} f(n,\omega) T_{\frac{\beta}{2}n}\widetilde{\varphi_\omega}(p+\omega) \right |}_{\coloneqq \varepsilon_1} + \underbrace{\left | \sum_{n=-N}^N (f(n,\omega)-\overline{g(n,\omega)}) T_{\frac{\beta}{2}n}\widetilde{\varphi_\omega}(p+\omega)  \right |}_{\coloneqq \varepsilon_2}.
    \end{split}
\end{equation*}
If $K$ and $\nu$ are the decay constants as given above then
\begin{equation*}
\begin{split}
\varepsilon_1 & \leq \sum_{n \in ([-N,N]\cap\Z)^c} |f(n,0)| | T_{\frac{\beta}{2}n}\widetilde{\varphi_\omega}(p+\omega)| \\
& \leq \sqrt{\frac{\pi}{2}} \sigma \left ( 1 + \frac{\sigma}{\beta}2\sqrt{2\pi} \right )^2\| c \|_\infty^2 \sqrt{2} e^{\frac{\omega^2}{4\sigma^2}} K \sum_{n \in ([-N,N]\cap\Z)^c} e^{-\nu |p+\frac{\omega}{2}-\frac{\beta}{2}n|}.
\end{split}
\end{equation*}
Since $N=\ceil{\frac{2}{\beta}(s+\frac{r}{2})}+m$ for some $m \in \N$, $p \in [-s,s]$ and $|\omega| \leq r$ it follows that for every $n \in ([-N,N]\cap\Z)^c$
$$
\left | p + \frac{\omega}{2} - \frac{\beta}{2}n \right | \geq \frac{\beta}{2} \left ( \ceil{\tfrac{2}{\beta}(s+\tfrac{r}{2})}+m \right ) - s - \frac{r}{2} \geq \frac{\beta}{2}m
$$
which implies that
$$
\sum_{n \in ([-N,N]\cap\Z)^c} e^{-\nu|a+\frac{\omega}{2}-\frac{\beta}{2}n|} \leq 2 \sum_{n=m+1}^\infty e^{-\frac{\nu\beta}{2}n} \leq 2\int_m^\infty e^{-\frac{\nu\beta}{2}t} \, dt = \frac{4}{\nu\beta} e^{-\frac{\nu\beta}{2}m}.
$$
Thus,
$$
\varepsilon_1 \leq \frac{4\sqrt{\pi}\sigma}{\nu\beta} \| c \|_\infty^2 \left ( 1 + \frac{\sigma}{\beta}2\sqrt{2\pi} \right)^2 K e^{\frac{\omega^2}{4\sigma^2}} e^{-\frac{\nu\beta}{2}m}.
$$
Now let $\mathcal{E} \coloneqq \left | \int_\R W - I_h^H(W) \right |$ be the quadrature error as derived in Step 2 above.
Then
\begin{equation*}
\begin{split}
\varepsilon_2 & \leq \mathcal{E} \sum_{n=-N}^N |T_{\frac{\beta}{2}n}\widetilde{\varphi_\omega}(p+\omega)| + \sum_{n=-N}^N \left ( h\sum_{k=-H}^H |\eta_{n,k}| \right ) |T_{\frac{\beta}{2}n}\widetilde{\varphi_\omega}(p+\omega)| \\
& \leq (\mathcal{E}+h\| \eta \|_\infty) \sum_{n=-N}^N |T_{\frac{\beta}{2}n}\widetilde{\varphi_\omega}(p+\omega)| \\
& \leq  (\mathcal{E}+h\| \eta \|_\infty) \sqrt{2}Ke^{\frac{\omega^2}{4\sigma^2}} \sum_{n \in \Z} e^{-\nu|p+\frac{\omega}{2} - \frac{\beta}{2}n|} \\
& \leq  (\mathcal{E}+h\| \eta \|_\infty) \sqrt{2}Ke^{\frac{\omega^2}{4\sigma^2}} \left ( 2 + \int_\R e^{-\frac{\nu\beta}{2}|t|} \, dt \right ) \\
& = (\mathcal{E}+h\| \eta \|_\infty) \sqrt{2}Ke^{\frac{\omega^2}{4\sigma^2}} \left ( 2 + \frac{4}{\nu\beta} \right )
\end{split}
\end{equation*}
and this yields the assertion by adding up $\varepsilon_1$ and $\varepsilon_2$.

\end{proof}

\bibliographystyle{acm}
\bibliography{bibfile}

\end{document}